\newcommand{\tf}{\tilde{f}}
\newtheorem{thm}{Theorem}[section]
\newtheorem{lmm}[thm]{Lemma}
\newtheorem{cor}[thm]{Corollary}
\newtheorem{prop}[thm]{Proposition}
\theoremstyle{definition}
\newcommand{\ee}{\mathbb{E}}
\newcommand{\cp}{\mathcal{P}}
\newcommand{\cq}{\mathcal{Q}}
\newcommand{\pp}{\mathbb{P}}
\newcommand{\rr}{\mathbb{R}}
\newcommand{\var}{\mathrm{Var}}
\newcommand{\ve}{\varepsilon}
\newcommand{\zz}{\mathbb{Z}}
\newcommand{\tilh}{\tilde{h}}
\newcommand{\fpar}[2]{\frac{\partial #1}{\partial #2}}
\numberwithin{equation}{section}
\renewcommand{\tilde}{\widetilde}
\begin{document}

\title{Superconcentration in surface growth}
\author{Sourav Chatterjee}
\address{Departments of Mathematics and Statistics, Stanford University}
\email{souravc@stanford.edu}
\thanks{Research partially supported by NSF grant DMS-1855484}
\thanks{Data availability statement: Data sharing not applicable to this article as no datasets were generated or analyzed during the current study}
\keywords{Random surface, superconcentration, sublinear variance, ballistic deposition, RSOS model}
\subjclass[2010]{82C41, 60E15}

\begin{abstract}
Height functions of growing random surfaces are often conjectured to be superconcentrated, meaning that their  variances grow sublinearly in time. This article introduces a new concept --- called \emph{subroughness} ---  meaning  that there exist two distinct points such that the expected squared difference between the heights at these points grows sublinearly in time. The main result of the paper is that superconcentration is equivalent to subroughness in a class of growing random surfaces. The result is applied to establish superconcentration in a variant of the restricted solid-on-solid (RSOS) model and in a variant of the ballistic deposition model, and give new  proofs of superconcentration in directed last-passage percolation and directed polymers.
\end{abstract} 

\maketitle


\section{Introduction and results}\label{intro}
A $d$-dimensional growing random surface is represented as a height function $f:\zz_{\ge 0}\times \zz^d\to \rr$ evolving in time, where $f(t,x)$ denotes the height of the surface at location $x$ at time $t$. The simplest example is the {\it random deposition model}, where the height $f(t,x)$ at each $x$ grows as a random walk with i.i.d.~increments, independently of the heights at other locations. In this model, $\var(f(t,x))$ grows linearly in $t$.

This is not the case, however, for any nontrivial model of surface growth where the growth of the height at a point is influenced by the heights at neighboring points. For most such models, it is conjectured that $\var(f(t,x))$ grows sublinearly in $t$, often in a very specific manner depending on the model~\cite{familyvicsek91, kimkosterlitz89, kellingodor11, pagnaniparisi15}. These conjectures have been rigorously proved in only a handful of cases, mostly for $d=1$, where exact calculations are possible. For surveys of the vast literature on one-dimensional surface growth and some recent advances in higher dimensions, see \cite{quastel12, corwin16, toninelli18}. 

Beyond exactly solvable models, not much is known. Even just showing that $\var(f(t,x)) = o(t)$ as $t\to\infty$ seems to be a challenging problem in nontrivial models. This is sometimes called {\it superconcentration} of the height function~\cite{chatterjee14}. The only nontrivial surface growth models where superconcentration has been rigorously established are directed last-passage percolation and directed polymers~\cite{alexanderzygouras13, graham12,chatterjee14, chatterjee08}, building on technology developed in~\cite{benjaminietal03} for the related model of first-passage percolation. 

The main result of this article shows that in a certain class of surface growth models, $\var(f(t,x))$ grows sublinearly in $t$ if and only if there exist two distinct points $x$ and $y$ (usually neighbors) such that $\ee[(f(t,x)-f(t,y))^2]$ grows sublinearly in $t$. The latter phenomenon is named {\it subroughness} in this paper. 

The utility of the equivalence theorem is demonstrated by applying it to prove superconcentration in variants of two popular models of random surface growth: (a) the restricted solid-on-solid model, and (b) the ballistic deposition model. Additionally, the theory is applied to give new proofs of superconcentration in directed last-passage percolation and directed polymers. 

The main advantage of subroughness over superconcentration is that it may be easier to establish subroughness  because neighboring heights are often close to each other `by design'. We will see a clear instance of this in the RSOS model later. Moreover, the equivalence of subroughness and superconcentration is conceptually interesting, because it says that superconcentration in random surfaces is caused by the tendency of neighboring heights to remain close to each other.  

The rest of this section contains the details of the theory. Examples are presented in Section \ref{examples}. The remaining sections contain the proofs. 

 

\subsection{A class of surface growth models}\label{defsec}
Let $d$ be a positive integer. Let $e_1,\ldots,e_d$ be the standard basis vectors of $\rr^d$. Let $A$ denote the set $\{0, \pm e_1,\pm e_2,\ldots, \pm e_d\}$, consisting of the origin and its $2d$ nearest neighbors in $\zz^d$. Let $B := A\setminus \{0\}$. The sets $A$ and $B$ will be fixed throughout this paper.  Let $\phi:\rr^A\times \rr\to \rr$ be a function. Let $\mathbf{z} = \{z_{t,x} : t\in \zz_{>0}, x\in \zz^d\}$ be a collection of i.i.d.~random variables. We will say that the evolution of a $d$-dimensional growing random surface $f:\zz_{\ge 0} \times \zz^d \to \rr$ is driven by the function $\phi$ and the `noise field' $\mathbf{z}$ if for each $t\in \zz_{\ge0}$ and $x\in \zz^d$,
\begin{align}\label{kpzevolve2}
f(t+1,x) = \phi((f(t,x+a))_{a\in A}, z_{t+1,x}).
\end{align}
We will henceforth assume that $z_{t,x}$ are i.i.d.~standard Gaussian random variables. This will not be too restrictive, since the only assumption we will make about $\phi$, in relation to the noise field, is that $\phi$ is Lipschitz continuous in the second argument (see below). This allows the noise variables to be anything that can be expressed as a Lipschitz function of a Gaussian random variable (e.g., uniform). 

Equation~\eqref{kpzevolve2} generalizes the  mechanism considered in \cite{chatterjee21, chatterjeesouganidis21}, which is almost the same except that it does not involve randomness. 
We assume that $\phi$ has the following properties:
\begin{itemize}
\item {\it Equivariance under constant shifts.} For $u\in \rr^A$ and $c\in \rr$, let $u+c$ denote the vector obtained by adding $c$ to each coordinate of $u$. We assume that $\phi(u+c, z)=\phi(u,z)+c$ for each $u\in \rr^A$ and $z\in \rr$. 
\item {\it Monotonicity.} We assume that $\phi$ is monotone increasing in the first variable. That is, if $u$ dominates $v$ in each coordinate, then $\phi(u,z)\ge \phi(v,z)$ for any $z$. 
\item {\it Lipschitz continuity in the noise variable.} We assume that $\phi$ is Lipschitz in the second argument with a Lipschitz constant $L$. That is, for all $u\in \rr^A$ and $z,z'\in \rr$, $|\phi(u,z)-\phi(u,z')|\le L|z-z'|$. 
\end{itemize}
Examples that are {\it not} covered by the growth mechanism \eqref{kpzevolve2} include any model where every vertex has a Poisson clock attached to it, and an update happens whenever the clock rings. This is an important class of models, which are quite similar to~\eqref{kpzevolve2} but different enough so that the methods of this paper do not immediately generalize. It would be interesting and important to see if analogous methods can be developed for such models.

Incidentally, the assumptions of monotonicity and equivariance for a discrete evolution equation are widely used in the literature on approximation schemes for nonlinear partial differential equations, starting with~\cite{barlessouganidis91}. They are also two of the key assumptions in \cite{chatterjee21, chatterjeesouganidis21}.

\subsection{A general fluctuation bound}\label{genfluc}
Henceforth, let $f$ be a growing random surface with driving function $\phi$ and i.i.d.~standard Gaussian noise field $\mathbf{z}$, where $\phi$ has the monotonicity and equivariance properties, and is Lipschitz in the noise variable with Lipschitz constant $L$. Our first main result is the following theorem, which says that under the above conditions, $f(t,x)$ has fluctuations of order at most $L\sqrt{t}$.  For this result, $f(0,\cdot)$ can be any function on $\zz^d$. We will later assume that $f(0,\cdot)\equiv 0$. 
\begin{thm}\label{genvarthm}
For all $t\ge 1$ and  $x\in\zz^d$, $\var(f(t,x))\le L^2 t$. Moreover, for all $\theta\in \rr$,  
\[
\ee(e^{\theta(f(t,x) - \ee(f(t,x)))}) \le e^{L^2 t\theta^2/2},
\]
and for all $r\ge 0$, 
\[
\pp(|f(t,x)-\ee(f(t,x))| \ge r) \le 2e^{-r^2/2L^2t}.
\]  
\end{thm}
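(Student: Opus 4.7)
The plan is to establish that $f(t,x)$, viewed as a function of the i.i.d.\ standard Gaussian noise variables in its light cone, has a squared gradient bounded almost surely by $L^2 t$, and then apply standard Gaussian concentration. All three statements then follow in one shot.

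First, I would observe that only finitely many noise variables influence $f(t,x)$, namely the $z_{s,y}$ for $1 \le s \le t$ and $y$ within graph distance $t$ of $x$ (finite propagation speed, since $A$ contains only nearest neighbors). Assuming $\phi$ is smooth --- which one can arrange by convolving with a smooth mollifier that preserves monotonicity and equivariance, and then taking limits --- let $g_s(t,x,y) := \partial f(t,x)/\partial z_{s,y}$. Differentiating \eqref{kpzevolve2} yields, for $s \le t$,
\[
g_s(t+1,x,y) \;=\; \sum_{a\in A} p_a(t,x)\, g_s(t,x+a,y),
\]
where $p_a(t,x) := \partial_{u_a}\phi((f(t,x+b))_{b\in A}, z_{t+1,x})$, and for $s = t+1$, $g_{t+1}(t+1,x,y) = \partial_z\phi \cdot \1_{y=x}$, which has absolute value at most $L$. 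The key structural fact is that $(p_a)_{a\in A}$ is a probability distribution on $A$: monotonicity of $\phi$ in the first argument gives $p_a \ge 0$, while equivariance under constant shifts (differentiated in the shift parameter) gives $\sum_{a\in A} p_a = 1$.

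Next, set $V_s(t,x) := \sum_y g_s(t,x,y)^2$ and $W(t,x) := \sum_{s=1}^t V_s(t,x) = \|\nabla f(t,x)\|^2$, the full squared gradient with respect to all relevant Gaussians. Jensen's inequality applied to the convex combination above yields $V_s(t+1,x) \le \sum_{a\in A} p_a(t,x)\, V_s(t,x+a)$ for $s \le t$, while the top contribution is bounded by $V_{t+1}(t+1,x) \le L^2$. Summing over $s$,
\[
W(t+1,x) \;\le\; L^2 + \sum_{a\in A} p_a(t,x)\, W(t,x+a)
\;\le\; L^2 + \max_{a\in A} W(t,x+a).
\]
Since $W(0,\cdot)\equiv 0$, induction on $t$ gives $W(t,x) \le L^2 t$ almost surely, uniformly in $x$.

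Finally, I would invoke Gaussian concentration. The pointwise bound $\|\nabla f(t,x)\|^2 \le L^2 t$ means $f(t,x)$ is a $L\sqrt{t}$-Lipschitz function of a standard Gaussian vector. The Gaussian Poincar\'e inequality then gives $\var(f(t,x)) \le L^2 t$; the Gaussian logarithmic Sobolev inequality (or the Herbst argument) gives the subgaussian MGF bound $\mathbb{E} e^{\theta(f(t,x)-\mathbb{E} f(t,x))} \le e^{L^2 t \theta^2/2}$; and the tail bound follows by optimizing $\theta$ in the Chernoff inequality. To remove the smoothness assumption, approximate $\phi$ by smooth monotone equivariant functions $\phi_n$ with the same Lipschitz constant $L$ in the noise variable, pass the above estimates to $f_n(t,x)$, and take $n\to\infty$ using dominated convergence (the bounds depend only on $L$ and $t$, not on $n$).

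The main obstacle is really just the structural identification $\sum_a p_a = 1$ with $p_a \ge 0$, which turns the gradient propagation into an averaging operation and lets Jensen close the induction; once this is in place, the rest is bookkeeping plus off-the-shelf Gaussian concentration. The mollification step is standard but deserves a careful word to justify that monotonicity and shift-equivariance survive the approximation.
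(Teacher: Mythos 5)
Your proof is correct, and the difference from the paper's argument is worth noting. Both arguments begin from the same structural observation — monotonicity plus equivariance force the derivative weights $p_a = \partial_{u_a}\phi$ to form a probability distribution on $A$ (Lemma \ref{problmm}) — but you and the paper exploit it differently. The paper first turns this into an explicit \emph{random walk representation} (Proposition \ref{derivprop}): $\partial f(t,x)/\partial z_{s,y} = \pp(S_s=y)\,\partial_z\phi$ for a backwards walk $S$, and then bounds $\sum_{s,y}\pp(S_s=y)^2 \le t$ via a collision-probability computation with an independent copy $S'$. You instead skip the random-walk identity entirely and close a direct induction on $W(t,x)=\|\nabla f(t,x)\|^2$ using Jensen's inequality applied to the convex combination $\sum_a p_a\, g_s(t,x+a,y)$, getting $W(t+1,x)\le L^2+\max_a W(t,x+a)\le L^2(t+1)$ pointwise. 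Your route is more economical for the present theorem: it produces the deterministic gradient bound with a two-line induction and no auxiliary stochastic object. What the paper's route buys is the Proposition \ref{derivprop} representation itself, which is not really needed here (one could equally well write $\sum_y\pp(S_s=y)^2\le 1$) but is indispensable later, in the proof of Theorem \ref{equivthm2}, where the derivative of the spatial average $X$ must be controlled in both $L^1$ and $L^2$ to feed Talagrand's inequality — the Jensen shortcut does not yield the $L^1$ control. So your proof is a valid and cleaner self-contained proof of Theorem \ref{genvarthm}, while the paper's detour through the random walk is motivated by reuse downstream. Your treatment of the mollification step (approximating $\phi$ by smooth $\phi_\ve$ preserving monotonicity, equivariance, and the Lipschitz constant, then passing the bounds to the limit) matches the paper's Lemmas \ref{phieplmm}--\ref{bdlmm2}, though the paper spells out the uniform integrability needed for the MGF convergence, which deserves the extra care you flag.
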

This theorem is proved in Section \ref{genvarthmproof}. The proof is based on the concentration of the Gaussian measure and a random walk representation of the derivatives of $f(t,x)$ with respect to the noise variables, derived in Section \ref{derivsec}.

\subsection{Equivalence of subroughness and superconcentration}\label{equivsec}
In this subsection, let us assume that $f(0,\cdot) \equiv 0$, in addition to the assumptions that the driving function $\phi$ is equivariant, monotone and Lipschitz in the noise variable with Lipschitz constant $L$, and that the noise variables $z_{t,x}$ are i.i.d.~standard Gaussian. We will say that the surface $f$ is {\it superconcentrated} if
\[
\lim_{t\to\infty} \frac{\var(f(t,x))}{t} = 0.
\]
Note that the term on the left does not depend on $x$ due to the assumption that $f(0,\cdot )\equiv 0$. We will say that the surface is {\it subrough} if there exist two distinct points $x,y\in \zz^d$ such that
\[
\lim_{t\to\infty}\frac{\ee[(f(t,x)-f(t,y))^2]}{t} =0.
\]
Lastly, we will say that the surface is {\it completely subrough} if the above equality holds for any two distinct points $x$ and $y$. The main result of this subsection (and of this paper) is the following.
\begin{thm}\label{equivthm}
For the surface $f$, superconcentration, subroughness and complete subroughness are equivalent.
\end{thm}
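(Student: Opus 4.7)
The theorem asserts three equivalent conditions, so I would prove the cycle \emph{complete subroughness} $\Rightarrow$ \emph{subroughness} $\Rightarrow$ \emph{superconcentration} $\Rightarrow$ \emph{complete subroughness}. The first implication is immediate. For the third, since $f(0,\cdot)\equiv 0$ and both the driving rule $\phi$ and the noise field are translation invariant on $\zz^d$, the law of $f(t,\cdot)$ is shift-invariant, so $\ee[f(t,x)]=\ee[f(t,0)]$ and $\var(f(t,x))=\var(f(t,0))$ for every $x$. Hence
\[
\ee[(f(t,x)-f(t,y))^2] = \var(f(t,x)-f(t,y)) \le 4\var(f(t,0))
\]
by the $L^2$ triangle inequality, and superconcentration immediately yields complete subroughness.

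The heart of the theorem is the implication subroughness $\Rightarrow$ superconcentration. Fix $c\in\zz^d\setminus\{0\}$ with $\ee[(f(t,0)-f(t,c))^2]=o(t)$. My starting point is the random-walk representation of the gradient from Section \ref{derivsec}: $\partial_{z_{s,y}}f(t,x)=\Phi_{s,y}\,P^{t,x}_{s,y}$, where $|\Phi_{s,y}|\le L$ and $P^{t,x}_{s,y}$ is the quenched probability that a backward random walk in the environment---with transitions $p_a(s,y)=\partial_{u_a}\phi$, non-negative by monotonicity and summing to $1$ by equivariance---travels from $(t,x)$ to $(s,y)$. Gaussian Poincaré gives
\[
\var(f(t,0)) \;\le\; L^2 \sum_{s=1}^t \ee\Bigl[\sum_y (P^{t,0}_{s,y})^2\Bigr],
\]
and, applied to $f(t,0)-f(t,c)$ after expansion and use of translation invariance of the environment in expectation,
\[
\ee[(f(t,0)-f(t,c))^2] \;\le\; 2L^2 \sum_{s=1}^t \bigl(Q(s)-M(s,c)\bigr),
\]
where $Q(s):=\ee[\sum_y(P^{t,0}_{s,y})^2]$ is the expected self-meeting probability of two conditionally independent backward walks from $(t,0)$ and $M(s,c):=\ee[\sum_y P^{t,0}_{s,y}P^{t,c}_{s,y}]$ is the analogous probability with one walk started from $(t,c)$. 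The goal is to convert the subroughness hypothesis into the bound $\sum_s Q(s)=o(t)$.

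The main obstacle is that Gaussian Poincaré only produces upper bounds on variances: subroughness supplies an upper bound on $\ee[(f(t,0)-f(t,c))^2]$, which gives no control on $\sum_s(Q(s)-M(s,c))$ in the useful direction, let alone on $\sum_s Q(s)$. To close this gap I would average $f(t,0)$ along the stationary sequence $\{f(t,kc):k\ge 0\}$. Setting $\bar f_N:=N^{-1}\sum_{k=0}^{N-1}f(t,kc)$, the $L^2$ triangle inequality combined with translation invariance and subroughness gives
\[
\|f(t,0)-\bar f_N\|_2^2 \;\le\; \frac{1}{N}\sum_{k=0}^{N-1}\|f(t,0)-f(t,kc)\|_2^2 \;\le\; \frac{1}{N}\sum_{k=0}^{N-1} k^2\,\ee[(f(t,0)-f(t,c))^2] \;\sim\; \tfrac{1}{3}N^2\,o(t),
\]
while the random-walk representation applied to $\bar f_N$ reduces---via the fact that backward walks started at lattice points spaced by $|c|$ cannot all concentrate simultaneously at a single site---to a bound on $\var(\bar f_N)$ that decays in $N$. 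Plugging both into $\var(f(t,0))\le 2\var(\bar f_N)+2\|f(t,0)-\bar f_N\|_2^2$ and optimizing over $N$ should then deliver $\var(f(t,0))=o(t)$. The most delicate step---and the one I expect to be the real obstacle---is making the bound on $\var(\bar f_N)$ sharp enough for this optimization to close, via a combinatorial analysis of the coupled backward walks in the common environment that uses only the abstract structure of the representation (no isotropy assumption on $\phi$).
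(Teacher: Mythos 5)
Your structure (the two easy implications, and attacking subroughness $\Rightarrow$ superconcentration via the averaged field $\bar f_N$ together with the random walk representation) matches the skeleton of the paper's argument; the easy implications are correct, and the decomposition $\var(f(t,0))\le 2\var(\bar f_N)+2\|f(t,0)-\bar f_N\|_2^2$ with $\|f(t,0)-\bar f_N\|_2^2 \lesssim N^2\beta_{c,t}t$ is also what the paper uses. But there is a genuine gap in the central step: the claim that the random walk representation, ``via the fact that backward walks started at lattice points spaced by $|c|$ cannot all concentrate simultaneously at a single site,'' yields a bound on $\var(\bar f_N)$ that decays in $N$ is false at the level of Gaussian Poincar\'e. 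Poincar\'e gives
\[
\var(\bar f_N)\le L^2\sum_{s=1}^t\ee\Bigl[\sum_y\Bigl(\tfrac1N\sum_{k=0}^{N-1}P^{t,kc}_{s,y}\Bigr)^2\Bigr]
=\frac{L^2}{N^2}\sum_{s=1}^t\sum_{0\le j,k<N}\ee\Bigl[\sum_y P^{t,kc}_{s,y}P^{t,jc}_{s,y}\Bigr],
\]
and the cross terms are the probabilities that two conditionally independent backward walks started at $kc$ and $jc$ have met by time $s$. In exactly the models you are trying to cover (LPP, max-type dynamics), backward walks coalesce --- this is the generic behavior for monotone drivers, not something ruled out by the starting points being distinct --- so these cross terms are close to $1$ for most $s$, the double sum is of order $N^2 t$, and the bound collapses back to $L^2 t$ with no decay in $N$. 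Controlling the coalescence rate directly in terms of the subroughness parameter $\beta_{c,t}$ is itself a hard problem that your outline does not address.

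The paper circumvents this obstacle by replacing Gaussian Poincar\'e with Talagrand's $L^1$--$L^2$ inequality. The averaging does not shrink the sum $\sum_{s,y}\|\partial\bar f_N/\partial z_{s,y}\|_{L^2}^2$ (which stays $\le L^2 t$ independent of $N$), but it does shrink the $L^1$ norms: using translation invariance of the quenched walk law and disjointness of the events $\{S^0_s=y-kc\}$ over $k$, one gets $\|\partial\bar f_N/\partial z_{s,y}\|_{L^1}\le L/N$. The large ratio of $L^2$ to $L^1$ norm then feeds into Talagrand's inequality to produce $\var(\bar f_N)\le CL^2t/\log N$, and it is this logarithmic gain (not a polynomial one) that closes the optimization, yielding $\alpha_t\le C/|\log\beta_{c,t}|$. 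Without this hypercontractive ingredient --- or some substitute analysis controlling walk coalescence, which the paper deliberately avoids --- the proposal does not close.
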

This result will be a consequence of a quantitative bound, which we now state. For each $t\ge 1$, define
\[
\alpha_t := \frac{\var(f(t,x))}{L^2t}. 
\]
Note that since $f(0,\cdot)\equiv 0$, the right side does not depend on $x$. Next, for any $b\in \zz^d$ and $t\ge 1$, define 
\[
\beta_{b,t} := \frac{\ee[(f(t,x)-f(t,x+b))^2]}{4L^2t}. 
\]
Again, note that the right side does not depend on $x$, but may depend on $b$. The surface is superconcentrated if and only if $\alpha_t\to 0$ as $t\to \infty$. On the other hand, the surface is subrough if and only if for some $b\ne 0$, $\beta_{b,t} \to 0$ as $t\to\infty$, and completely subrough if and only if this holds for any $b\ne 0$. The following theorem relates $\alpha_t$ and $\beta_{b,t}$ through a pair of inequalities, which immediately imply that these three conditions are equivalent, and hence establish Theorem \ref{equivthm}. The proof uses the ``$L^1$--$L^2$ bound''  of~\citet{talagrand94} (which is an extension of the idea of using hypercontractivity for improving variance bounds due to~\citet{kahnetal88}), and an averaging trick invented by \citet{benjaminietal03}. The main new ingredient in the argument is the random walk representation from Section \ref{derivsec}. 
\begin{thm}\label{equivthm2}
There is a universal constant $C$ such that for any $b\ne 0$ and $t\ge 1$, 
\[
\beta_{b,t} \le \alpha_t \le \frac{C}{|\log \beta_{b,t}|}. 
\]
\end{thm}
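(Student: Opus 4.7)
My plan is to establish the two inequalities in Theorem~\ref{equivthm2} separately. For the easy direction $\beta_{b,t}\le \alpha_t$, I would use that $\ee f(t,x) = \ee f(t,x+b)$ by translation invariance of the noise together with $f(0,\cdot)\equiv 0$, so that
\[
\ee[(f(t,x)-f(t,x+b))^2] = \var(f(t,x)-f(t,x+b)) \le 2\var f(t,x)+2\var f(t,x+b) = 4 \var f(t,x);
\]
dividing by $4L^2 t$ finishes this part.

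For the harder direction $\alpha_t\le C/|\log\beta_{b,t}|$, the plan is a Benjamini--Kalai--Schramm-style averaging combined with a hypercontractive $L^1$--$L^2$ variance bound applied to the averaged surface. Writing $\beta = \beta_{b,t}$ and introducing an integer parameter $N\ge 1$, I would consider
\[
g(t,x) := \frac{1}{2N+1} \sum_{k=-N}^{N} f(t, x + kb).
\]
Jensen, telescoping along the line $\{kb\}$, and subroughness give $\ee[(g-f)^2] \le 4L^2 t \beta N^2$, so $\var f \le 2\var g + 8 L^2 t\beta N^2$. It therefore suffices to show $\var g \le C L^2 t/\log(2N+1)$, since then choosing $N \sim \lceil 1/\sqrt{\beta|\log\beta|}\,\rceil$ yields $\alpha_t \le C'/|\log\beta|$.

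For the variance of $g$, I would use the random walk representation from Section~\ref{derivsec}: each derivative factors as $\partial_{s,y} f(t,x) = \eta_{s,y}\, p^{(t,x)}(s,y)$, where $|\eta_{s,y}|\le L$ almost surely and $p^{(t,x)}(s,\cdot)$ is a (noise-dependent) probability distribution on $\zz^d$. Consequently $\partial_{s,y} g = \eta_{s,y}\, \bar p(s,y)$ with $\bar p(s,y) := (2N+1)^{-1}\sum_k p^{(t,x+kb)}(s,y)$, which gives $\|\partial_{s,y} g\|_\infty \le L$ immediately. The crucial estimate is $\|\partial_{s,y} g\|_1 \le L/(2N+1)$: setting $Q(w) := \ee p^{(t,0)}(s,w)$ (a probability distribution on $\zz^d$), translation invariance of the noise yields $\ee \bar p(s,y) = (U*Q)(y-x)$ with $U$ the uniform measure on $\{kb:|k|\le N\}$, so by Young's convolution inequality $\|U*Q\|_\infty \le \|U\|_\infty \|Q\|_1 = 1/(2N+1)$. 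Plugging these $L^\infty$ and $L^1$ bounds into Talagrand's $L^1$--$L^2$ variance inequality (which can be obtained from $\var F = 2\int_0^\infty e^{-2t}\sum_i\|T_t \partial_i F\|_2^2\, dt$ via Nelson's hypercontractivity and H\"older interpolation), together with $\sum_{s,y}\|\partial_{s,y} g\|_1 \le L t$, yields $\var g \le C L^2 t/\log(2N+1)$.

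The main obstacle is producing a uniform $L^1$ bound on $\partial_{s,y} g$: without averaging, $\ee p^{(t,x)}(s,y)$ may concentrate on a single site, leaving no room in the Talagrand denominator. The BKS averaging over $k\in\{-N,\ldots,N\}$ smooths the expected kernel, and Young's inequality converts the $L^1$ normalization $\sum_y Q(y) = 1$ into the pointwise bound $\ee \bar p \le 1/(2N+1)$. That averaging $f$ in space passes cleanly into a spatial average of the backward walk kernel $p^{(t,\cdot)}$ is precisely the role of the random walk representation in the argument.
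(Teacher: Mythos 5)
Your plan is correct and is essentially the paper's argument. The easy direction $\beta_{b,t}\le\alpha_t$ matches exactly. For the hard direction, the paper averages one-sidedly, $X=\frac{1}{k}\sum_{i=0}^{k-1}f(t,x+ib)$, rather than symmetrically, but this makes no difference; your approximation bound $\|g-f\|_{L^2}\lesssim N\sigma_{b,t}$ and the paper's $\|X-f\|_{L^2}\le k\sigma_{b,t}/2$ play the same role. Your use of Young's inequality for $\ee\bar p\le 1/(2N+1)$ is the same fact the paper obtains by observing the events $\{S^0_s=y-ib\}$ are disjoint in $i$; both are just $\sum_k Q(y-x-kb)\le\|Q\|_1=1$. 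Your interpolation $\|\partial_{s,y}g\|_{L^2}^2\le\|\partial_{s,y}g\|_\infty\|\partial_{s,y}g\|_{L^1}\le L\|\partial_{s,y}g\|_{L^1}$ is exactly the paper's choice $A_{s,y}^2:=LB_{s,y}$, and the resulting ratio $A_{s,y}/\|\partial_{s,y}g\|_{L^1}\ge\sqrt{2N+1}$ and mass bound $\sum_{s,y}A_{s,y}^2\le L^2 t$ reproduce the paper's computation, giving $\var g\le CL^2t/\log(2N+1)$. The choice $N\sim\lceil(\beta|\log\beta|)^{-1/2}\rceil$ agrees with the paper's $k$. The only small omission is the degenerate regime: for $\beta_{b,t}$ bounded away from $0$ (the paper uses $\beta_{b,t}\ge 1/10$) your choice of $N$ does not yield anything useful, but the claimed bound $\alpha_t\le C/|\log\beta_{b,t}|$ is then trivial since $\alpha_t\le 1$ by Theorem~\ref{genvarthm}; the paper handles this case explicitly, and you should too.
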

This theorem is proved in Section \ref{equivproof}. It would be interesting to understand if the upper bound is sharp under the given conditions, or if it can be improved.

\section{Examples}\label{examples}
This section contains the applications of the theory to the four examples mentioned in the introduction, namely, a variant of the RSOS model, a variant of ballistic deposition, directed last-passage percolation, and directed polymers.
\subsection{A variant of the RSOS model}\label{rsossec}
The restricted solid-on-solid (RSOS) model is a popular toy model of surface growth introduced by \citet{kimkosterlitz89} (not to be confused with an `eight vertex model' that goes by the same name~\cite{andrewsetal84}). There are many variants of this model, all built on one basic principle: The growing surface has to satisfy, at all times, that the differences between the heights at neighboring points are uniformly bounded by some given constant (usually $1$). 



We will work with the following variant in this subsection. Consider $\zz^d$ as a bipartite graph, splitting the set of vertices into `even' and `odd' vertices, depending on the parity of the sum of coordinate values. Alternately update the heights at even and odd vertices, choosing independently and uniformly among all values that maintain the constraint that  the differences between the heights at neighboring points are uniformly bounded by $1$. To be more explicit, the algorithm is as follows. Let $f(t,x)$ denote the height of the surface at time $t$ and location $x$. Then: 
\begin{itemize}
\item Start with $f(0,x)=0$ for all $x$.
\item If $t$ is even, then for each even vertex $x$, choose $f(t+1,x)$ uniformly from the interval 
\[
[\max_{b\in B} f(t,x+b) -1, \min_{b\in B} f(t,x+b)+1], 
\]
which is the set of all possible values that maintain the required constraint. (Recall that $B = \{\pm e_1,\ldots, \pm e_d\}$ is the set of nearest neighbors of the origin.) For each odd vertex, let $f(t+1,x)=f(t,x)$. 
\item If $t$ is odd, switch the update rules for odd and even vertices in the above step. 
\end{itemize}
With the above growth mechanism, it is easy to see inductively that the required constraint is maintained at all times. 

The growth of $\var(f(t,x))$ is one of the main unsolved questions about RSOS-type models. For $d=1$, it is believed that the variance grows like $t^{2/3}$, just like in any other model in the KPZ universality class~\cite{kimkosterlitz89}. For $d=2$, it was conjectured in \cite{kimkosterlitz89} that the variance grows like $t^{1/2}$, but this has been contradicted in some large-scale numerical studies in recent years~\cite{pagnaniparisi15, kellingodor11}. The following result  shows that in the variant described above, $\var(f(t,x))$ grows at most like $t/\log t$.  
\begin{thm}\label{rsosthm}
Let $f$ be the height function in the variant of the RSOS model defined above, in any dimension. There is a constant $C(d)$, depending only on the dimension $d$, such that for any $t\ge 2$ and $x\in \zz^d$, $\var(f(t,x))\le C(d)t/\log t$. 
\end{thm}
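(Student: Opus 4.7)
The plan is to exhibit this RSOS variant as an instance of the framework \eqref{kpzevolve2} with the Lipschitz constant $L$ a function only of dimension, and then to extract the bound from Theorem \ref{equivthm2} by observing that the built-in slope constraint gives a free quantitative bound on $\beta_{b,t}$.

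First I would rewrite the uniform draw in the update as a Lipschitz function of a standard Gaussian. Let $\Psi$ denote the standard Gaussian CDF. At a vertex being updated, the new height is
\[
\phi((u_a)_{a\in A}, z) = (1-\Psi(z))\bigl(\max_{b\in B} u_b - 1\bigr) + \Psi(z)\bigl(\min_{b\in B} u_b + 1\bigr),
\]
while at a vertex not being updated we simply take $\phi = u_0$. Equivariance is immediate since shifting every $u_a$ by $c$ shifts both the $\max$ and the $\min$ by $c$. Monotonicity follows since the right-hand side is a convex combination of $\max_b u_b - 1$ and $\min_b u_b + 1$, both of which are coordinatewise nondecreasing in $u$. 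Lipschitz continuity in $z$ holds because $\min_b u_b - \max_b u_b + 2 \in [0,2]$ (by induction the slopes stay bounded by $1$, so the interval has length at most $2$), giving
\[
|\phi(u,z) - \phi(u,z')| \le 2|\Psi(z)-\Psi(z')| \le \sqrt{2/\pi}\,|z-z'|,
\]
so we may take $L = \sqrt{2/\pi}$. The parity-dependent alternation between even and odd sublattices is the only part that does not fit \eqref{kpzevolve2} verbatim, and I would handle it by noting that the proofs of Theorems \ref{genvarthm} and \ref{equivthm2} go through unchanged when $\phi$ is allowed to depend on $(t,x)$ in a measurable way, as long as the three structural properties hold uniformly with the same constant $L$.

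Next I would use the RSOS constraint to bound $\beta_{b,t}$ directly. By construction, whenever a vertex is updated the new height lies in $[\max_b f(t,x+b)-1, \min_b f(t,x+b)+1]$, so in particular it is within distance $1$ of the height at every neighbor, which is unchanged during that step. A one-line induction therefore shows $|f(t,x)-f(t,y)|\le 1$ deterministically for every pair of nearest neighbors $x,y$ and every $t\ge 0$. Hence for any $b\in B$,
\[
\beta_{b,t} = \frac{\ee[(f(t,x)-f(t,x+b))^2]}{4L^2 t} \le \frac{1}{4L^2 t}.
\]

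Finally, plugging this into the upper bound of Theorem \ref{equivthm2} gives
\[
\alpha_t \le \frac{C}{|\log \beta_{b,t}|} \le \frac{C}{\log(4L^2 t)},
\]
so that $\var(f(t,x)) = L^2 t\,\alpha_t \le C(d)\, t/\log t$ for all $t\ge 2$, which is the claim. The only step that requires any real care is the first one: casting a parity-dependent, interval-valued random update in the single-$\phi$ framework of Section \ref{defsec}. Everything after that is a direct consequence of the fact that the RSOS rule enforces subroughness with the sharpest possible bound (a deterministic $O(1)$ bound on neighbor differences), which feeds Theorem \ref{equivthm2} with $\beta_{b,t} = O(1/t)$ and yields the logarithmic gain.
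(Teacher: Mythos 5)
Your high-level strategy is the right one --- cast the model as an instance of \eqref{kpzevolve2}, extract subroughness for free from the RSOS constraint $|f(t,x)-f(t,y)|\le 1$ on nearest neighbors, and feed $\beta_{b,t}=O(1/t)$ into Theorem \ref{equivthm2}. But two steps that you flag as routine are in fact where the real work is, and as written your argument does not close either of them.

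First, the Lipschitz claim. You write that $\partial_z\phi = \Phi'(z)(\min_b u_b - \max_b u_b + 2)$ is bounded because ``by induction the slopes stay bounded by $1$, so the interval has length at most $2$.'' That reasoning only bounds the derivative at the configurations actually visited by the process. The hypothesis of the framework in Section \ref{defsec}, and the proofs of Theorems \ref{genvarthm} and \ref{equivthm2} (which mollify $\phi$ and then invoke the random walk representation), require $|\phi(u,z)-\phi(u,z')|\le L|z-z'|$ for \emph{all} $u\in\rr^A$. Your $\phi$ fails this: take $\max_b u_b - \min_b u_b = 10^6$ and $\partial_z\phi$ blows up. The paper's fix is to compose with a Lipschitz clipping function $\xi:\rr\to[0,2]$, writing $\phi(u,z) = \Phi(z)\,\xi(\max_b u_b - \min_b u_b) + \min_b u_b + 1 - 2\Phi(z)$, which makes $|\partial_z\phi|\le 4/\sqrt{2\pi}$ globally and (after a small but nontrivial check) still leaves $\phi$ monotone and equivariant, while agreeing with the original rule on the subset where $\max-\min\in[0,2]$.

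Second, the parity alternation. You assert that the proofs of Theorems \ref{genvarthm} and \ref{equivthm2} ``go through unchanged when $\phi$ is allowed to depend on $(t,x)$.'' That is not true for Theorem \ref{equivthm2}: its proof uses the translation invariance of the law of $f(t,\cdot)$ to rewrite $\pp(S^i_s=y)=\pp(S^0_s=y-ib)$ and to argue that $\alpha_t$, $\beta_{b,t}$ are well defined (independent of $x$). For the alternating RSOS model, translating by a nearest-neighbor vector flips parity, so translation invariance is broken precisely along the direction $b\in B$ you try to use. What the paper actually does is quite different: it introduces an auxiliary surface $g$ that updates \emph{every} site at each step (hence genuinely fits \eqref{kpzevolve2} and is translation invariant), proves $|g(t,x)-g(t,x+b+b')|\le 2$ by coupling $g$ to two parity-subsampled surfaces $h,\tilh$ that each obey the RSOS constraint, applies Theorem \ref{equivthm2} to $g$ with the shift $b=2e_1$, and transfers the variance bound back to $f$ by showing $f$ has the same law as $h$. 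You could try to salvage your route by restricting to shifts $b=2e_1$ and re-proving Theorem \ref{equivthm2} under parity-periodic (rather than full) translation invariance, but that is additional work that your write-up does not acknowledge, and the paper's construction of $g$ avoids it cleanly. In short: the idea is correct and essentially the paper's, but the ``only step that requires any real care'' in your last paragraph is actually two steps, both of which your argument currently glosses over.
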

This result is proved in Section \ref{rsosproof}. The logarithmic correction comes from applying Theorem \ref{equivthm2}. Although the growth mechanism of the model does not exactly fit into the framework of this paper, this can be easily taken care of, as we will do in Section \ref{rsosproof}.



\subsection{A variant of ballistic deposition}\label{ballisticsec}
Ballistic deposition is a popular model of surface growth introduced by~\citet{vold59} and subsequently studied by many authors. One version of the model is as follows. There is, as usual, a height function $f(t,x)$, but now the time variable is continuous. There is an independent Poisson clock at each $x$. When the clock at $x$ rings, a brick of height $1$ drops on the surface at location $x$ `from infinity', as in a game of Tetris. As the brick descends, it can either attach itself to the surface at $x$, thereby increasing the height at $x$ by $1$, or it can get `stuck' to the side of one the neighboring columns if that happens before it reaches the surface. Thus, if the clock at $x$ rings at time $t$, then the height $f(t,x)$ instantly increases to 
\begin{align}\label{ball1}
\max\{f(t,x)+1, \max_{b\in B} f(t,x+b)\}. 
\end{align}
The physical literature on ballistic deposition is huge. For classical surveys, see~\cite{familyvicsek91,barabasistanley95}. Physicists say that this model is in the KPZ universality class, implying that the variance of $f(t,x)$ grows like $t^{2/3}$ when $d=1$~\cite{kimkosterlitz89}, and possibly like $t^{\alpha}$ for some $\alpha$ slightly less than $1/2$ when $d=2$~\cite{pagnaniparisi15}. On the mathematical side, the only results we know are the following:
\begin{itemize}
\item A strong law of large numbers for the height function was proved by \citet{seppalainen00}. 
\item A central limit theorem for the total height in a large region at a finite time $t$ was proved by \citet{penroseyukich02}. 
\item \citet{penrose08} proved that the variance of $f(t,x)$ grows at least like $\log t$ when $d=1$. 
\end{itemize}
In this section we will consider the following variant of ballistic deposition. Instead of bricks falling at random times, our model will update the heights at all sites simultaneously. To insert randomness, we will make the brick heights random. For definiteness, let us take the brick heights to be i.i.d.~Uniform$[0,1]$ random variables. In other words, the height function $f:\zz_{\ge 0} \times \zz^d \to \rr$ behaves as follows, in analogy with \eqref{ball1}. 
\begin{itemize}
\item We start with $f(0,x)=0$ for all $x$.
\item For each $t\ge 0$ and $x\in \zz^d$, we let
\[
f(t+1,x) = \max\{f(t,x)+v_{t+1,x}, \max_{b\in B} f(t,x+b)\},
\]
where $v_{t,x}$ are i.i.d.~Uniform$[0,1]$ random variables.
\end{itemize}
We will show that in this model $\var(f(t,x))\le C(d)t/\log t$, where $C(d)$ is a constant that depends only on $d$. To put this in the framework of equation \eqref{kpzevolve2}, we define $v_{t,x} = \Phi(z_{t,x})$ where $z_{t,x}$ are i.i.d.~standard Gaussian random variables and $\Phi$ is the standard Gaussian c.d.f., and then take
\begin{align}\label{ballphi}
\phi(u,z) = \max\{u_0 + \Phi(z), \max_{b\in B} u_b\}. 
\end{align}
Note that this $\phi$ is monotone, equivariant, and Lipschitz in the noise variable with Lipschitz constant bounded by $1/\sqrt{2\pi}$. 

We will, in fact, prove superconcentration of the surface for a broader class of driving functions that includes the above $\phi$ as a special case. This class of driving functions will be called `max type'. We will say that a driving function $\phi:\rr^A\times \rr\to \rr$ is of max type if it is monotone, equivariant, Lipschitz in the noise variable, and there are nonnegative constants $K_1$ and $K_2$ such that for all $u\in \rr^A$ and $z\in \rr$,
\begin{align}\label{maxtype}
|\phi(u,z)-\max_{a\in A} u_a|\le K_1+ K_2|z|.
\end{align}
Clearly, the $\phi$ displayed in \eqref{ballphi} is of max type. The following theorem shows that the surface generated by any model of max type (including, in particular, the variant of ballistic deposition introduced above) is superconcentrated. 
\begin{thm}\label{ballthm1}
Let $f$ be a growing random surface with $f(0,\cdot)\equiv 0$ and growing according to \eqref{kpzevolve2}, where $\phi$ is of max type, and the noise field is i.i.d.~standard Gaussian. Then there is a constant $C$ depending only on $\phi$ and $d$, such that:
\begin{enumerate}
\item[$(1)$] For any $t\ge 2$ and neighboring points $x,y\in \zz^d$, $\ee|f(t,x)-f(t,y)|\le Ct^{1/4}\sqrt{\log t}$. 
\item[$(2)$] For any $t\ge 2$ and neighboring points $x,y\in \zz^d$, $\ee[(f(t,x)-f(t,y))^2]\le Ct^{3/4}\log t$. 
\item[$(3)$] For any $t\ge 2$ and $x\in \zz^d$, $\var(f(t,x))\le Ct/\log t$. 
\end{enumerate}
\end{thm}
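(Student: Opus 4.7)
\textbf{Plan for Theorem~\ref{ballthm1}.} My strategy is to derive parts~(2) and~(3) from part~(1), then prove part~(1) directly using the max-type property~\eqref{maxtype}.

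\textbf{Parts (2) and (3) from (1).} Since $f(0,\cdot)\equiv 0$ and the noise field is i.i.d.\ (hence translation invariant in distribution), $\ee f(t,x)=\ee f(t,y)$, so $f(t,x)-f(t,y)$ has mean zero and is $O(L\sqrt t)$-subgaussian as a difference of two $L\sqrt t$-subgaussian variables (Theorem~\ref{genvarthm}). For part~(2), I write
\[
\ee[(f(t,x)-f(t,y))^2]=\int_0^\infty 2r\,\pp(|f(t,x)-f(t,y)|>r)\,\dd r,
\]
split the integral at a threshold $a\asymp L\sqrt{t\log t}$, bound the lower piece by $a\,\ee|f(t,x)-f(t,y)|\lesssim t^{3/4}\log t$ using part~(1), and bound the upper piece by a Gaussian tail integral of size $O(1)$. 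For part~(3), I invoke Theorem~\ref{equivthm2}: part~(2) gives $\beta_{b,t}\lesssim t^{-1/4}\log t$, so $|\log\beta_{b,t}|\ge\tfrac14\log t-O(\log\log t)\ge\tfrac15\log t$ for $t$ large, yielding $\alpha_t\le C/\log t$.

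\textbf{Part (1): the bound $\ee|f(t,x)-f(t,y)|\le Ct^{1/4}\sqrt{\log t}$.} Setting $\ep_{s,y}:=f(s,y)-\max_{a\in A}f(s-1,y+a)$, the max-type hypothesis gives $|\ep_{s,y}|\le K_1+K_2|z_{s,y}|$, and iterating the recursion $f(s,y)=\max_{a\in A}f(s-1,y+a)+\ep_{s,y}$ yields the Hopf--Lax / last-passage representation
\[
f(t,x)=\max_\pi\sum_{s=1}^t\ep_{s,\pi_s},
\]
where $\pi:\{0,\ldots,t\}\to\zz^d$ ranges over backward paths with $\pi_t=x$ and $\pi_{s-1}-\pi_s\in A$. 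For $y=x+b$ with $b\in B$, the translate $\pi+b$ of a path ending at $x$ is a valid path ending at $y$, so comparing the optimizer at $x$ with its shift at $y$ (and vice versa) reduces the control of $|f(t,x)-f(t,y)|$ to a sum of local $\ep$-differences along a single path.

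To squeeze the sub-$\sqrt t$ factor out of this pathwise expression, I plan to combine this representation with the random walk representation of $\partial f(t,x)/\partial z_{s,w}$ from Section~\ref{derivsec} and Talagrand's $L^1$--$L^2$ hypercontractive inequality---the same combination used to prove Theorem~\ref{equivthm2}. The random walk representation expresses the derivatives of $f(t,x)-f(t,y)$ with respect to each noise variable in terms of the difference of two backward walks started at $(t,x)$ and $(t,y)$; for neighboring endpoints these walks can be coupled to largely coincide, and Talagrand's inequality then converts the small individual contributions into the desired $t^{1/4}\sqrt{\log t}$ bound on the $L^1$ norm.

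\textbf{Main obstacle.} The principal difficulty is that the $\ep_{s,y}$ are not functions of $z_{s,y}$ alone but depend on the entire prior history, so the last-passage representation above has correlated ``weights'' and classical LPP/BKS arguments do not apply off the shelf. I expect to sidestep this by applying Talagrand's inequality directly to the Gaussian functional $f(t,x)-f(t,y)$, using the random walk representation for the influences and the uniform bound $|\ep|\le K_1+K_2|z|$ to control each Lipschitz size. The delicate step will be a Benjamini--Kalai--Schramm style averaging over translates that coaxes out the $t^{-1/4}$ improvement over the trivial $L\sqrt t$ subgaussian bound, and this is where most of the technical work will lie.
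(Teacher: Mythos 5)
Your derivation of part~(2) from part~(1) by tail-splitting at a threshold $\asymp\sqrt{t\log t}$, and of part~(3) from part~(2) via Theorem~\ref{equivthm2}, matches the paper exactly.

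\textbf{Part (1).} Here your plan does not work, and the actual argument in the paper is quite different. The mechanism you propose --- apply the random-walk representation of derivatives together with Talagrand's $L^1$--$L^2$ inequality and a BKS-style translation averaging to the increment $f(t,x)-f(t,y)$ --- is essentially the same machinery that is already packaged inside Theorem~\ref{equivthm2}. That machinery converts a subroughness estimate (smallness of $\beta_{b,t}$) into a superconcentration estimate ($\alpha_t\lesssim 1/|\log\beta_{b,t}|$); it cannot by itself produce the input estimate. More to the point, Talagrand's inequality yields at most a logarithmic gain over the Poincar\'e/subgaussian bound. The trivial bound from Theorem~\ref{genvarthm} is $\ee|f(t,x)-f(t,y)|=O(\sqrt t)$; to reach $t^{1/4}\sqrt{\log t}$ you need a \emph{polynomial} gain of order $t^{1/4}$, which hypercontractivity does not deliver unless you already have an independent reason for the individual influences to be far smaller than their $L^2$ size --- and that is exactly what you do not yet know. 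The statement ``for neighboring endpoints these walks can be coupled to largely coincide'' is asserted without justification; the backward walks of Section~\ref{derivsec} have transition kernels $\partial_a\phi$ depending on the local random landscape, and there is no coalescence result in the paper (nor an obvious one to prove).

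The paper's proof of part~(1) instead exploits stationarity of the field $f(t,\cdot)$ together with the specific max-type structure, and does \emph{not} use the random-walk/Talagrand machinery. The key steps are: a deterministic ``moving maximum'' lemma (Lemma~\ref{numlmm} and Corollary~\ref{simpcor}) showing that the sum of successive increments of windowed maxima of a real sequence telescopes to a small quantity; its probabilistic consequence (Lemmas~\ref{maxlmm} and~\ref{glmm}) that for a stationary field $g$ the expected increase of $\max_{|x|_1\le r}g(x)$ when $r$ grows by one is $O(\mu(4r)/r)$, where $\mu(s)$ controls oscillation over an $\ell^1$ ball of radius $s$; combining this with the subgaussian oscillation estimate $\mu(s)\lesssim\sqrt{t\log s}$ coming from Theorem~\ref{genvarthm}, to show (Lemma~\ref{fdiff1}) that
\[
\ee\Bigl|\max_{|x|_1\le r}f(t,x)-\max_{|x-e_1|_1\le r}f(t,x)\Bigr|\lesssim\frac{\sqrt{t\log r}}{r};
\]
and finally the max-type approximation (Lemma~\ref{fdiff2})
\[
\Bigl|f(t,x)-\max_{|y|_1\le r}f(t-r,x+y)\Bigr|\le\sum_{k=0}^{r-1}\max_{|y|_1\le k}\bigl(K_1+K_2|z_{t-k,x+y}|\bigr),
\]
with expectation $\lesssim r\sqrt{\log r}$. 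Balancing these two errors with $r\asymp t^{1/4}$ gives $\ee|f(t,x)-f(t,x+e_1)|\lesssim t^{1/4}\sqrt{\log t}$. Your last-passage representation $f(t,x)=\max_\pi\sum_s\ep_{s,\pi_s}$ is a correct reformulation of the max-type structure, but it is not how the paper extracts the sub-$\sqrt t$ bound; the missing idea you need is the moving-maximum / overlapping-balls argument of Lemmas~\ref{numlmm}--\ref{fdiff2}.
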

This result is proved in Section \ref{ballisticproof}. The proof of the first claim is by a new argument that may be of independent interest. The second claim follows by combining the first claim and Theorem \ref{genvarthm}. The third claim is proved using the second claim and Theorem \ref{equivthm2}.

\subsection{Point-to-plane last-passage percolation}\label{lppsec}
The model of $d$-dimensional point-to-plane directed last-passage percolation (LPP)~\cite{johansson00} fits into our framework, for any $d\ge 2$. Recall that this model is defined as follows. We start with a collection of i.i.d.~vertex weights $\{w_x\}_{x\in \zz^d}$, often called the `environment'. Let $|x|_1$ denote the $\ell^1$ norm of a vector $x\in \zz^d$. Let $O^+$ denote the positive orthant in $\zz^d$, that is, the set of vectors with nonnegative coordinates. Given an integer $t\ge 1$, let $\cq_t$ be the set of all lattice paths from the origin to the plane $\{(x_1,\ldots,x_d)\in O^+: |x|_1 = t\}$, which move in the `positive direction' at each step. In other words, an element $Q\in \cq_t$ is a sequence $(q_0,\ldots, q_t)\in (\zz^d)^t$ such that $q_0=0$, and for each $i\ge 1$, $q_i = q_{i-1}+e_j$ for some $j$. The `point-to-plane last-passage time' is defined as
\begin{align}\label{ltdef}
L_t := \max_{Q\in \cq_t} \sum_{i=0}^{t-1}w_{q_i}. 
\end{align}
This model fits into the framework of equation \eqref{kpzevolve2} by taking 
\[
\phi(u,z) = \max_{b\in B^+} u_b + F(z),
\]
where $B^+ = \{e_1,\ldots, e_d\}$ is the set of neighbors of the origin in the positive orthant, and $F$ is a Lipschitz function that transforms the standard Gaussian measure on $\rr$ to the law of the environment in LPP, This allows only a certain class of laws for the environment --- namely, those that can be expressed as Lipschitz functions of Gaussian --- but as noted earlier, this class is quite broad. To see the equivalence with point-to-plane LPP, note that with the above $\phi$, a simple induction shows that
\[
f(t,x) = \max_{Q\in \cq_{t}} \sum_{i=0}^{t-1} F( z_{t-i, x+q_i}),
\]
From this, it is not hard to see that for any $x$ and $t$, $f(t,x)$ has the same law as $L_t$. Indeed, if we define 
\[
w_y := F(z_{t-|y|_1, x+y}),
\]
then $\{w_y\}_{y\in O^+}$ are i.i.d.~random variables, and $f(t,x)=L_t$ if $L_t$ is defined as in~\eqref{ltdef} using these $w$'s.

Note that $\phi$ is equivariant under constant shifts, monotone in the first argument, and Lipschitz continuous in the second argument. Thus, it satisfies all the required conditions. Superconcentration in point-to-plane LPP, with the variance bound $\var(L_t)\le Ct/\log t$ (where $C$ depends only on the dimension and the law of the noise variables) was proved in \cite{chatterjee08, chatterjee14} for $d=2$. \citet{graham12} proved superconcentration in point-to-point directed last-passage percolation in all dimensions, with the same variance bound. I have not seen a proof of superconcentration in point-to-plane directed last passage percolation in $d\ge 3$, but it is possible that it follows from Graham's methods.  The following theorem proves this result, together with a novel subroughness bound that may be of independent interest. 
\begin{thm}\label{lppthm}
Consider the surface $f$ generated by the LPP model defined above. Let $C$ denote any constant that depends only on the dimension $d$ and the law of the environment. Then, we have the following bounds:
\begin{enumerate}
\item[$(1)$] For any $t\ge 2$, $x\in \zz^d$, and $1\le i<j\le d$, $\ee|f(t,x+e_i)-f(t,x+e_j)|\le C\sqrt{\log t}$.
\item[$(2)$] For any $t\ge 2$, $x\in \zz^d$, and $1\le i<j\le d$, $\ee[(f(t,x+e_i)-f(t,x+e_j))^2]\le C\sqrt{t}\log t$.
\item[$(3)$] For any $t\ge 2$ and $x\in \zz^d$, $\var(f(t,x))\le Ct/\log t$. 
\end{enumerate}
\end{thm}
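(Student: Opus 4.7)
My plan is to prove claims (1), (2), (3) in that order, with (1) being the main content and (2) and (3) following by the general tools of the paper, mirroring the structure of Theorem~\ref{ballthm1}.

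For claim (1), I would exploit the specific form of the LPP driving function $\phi(u,z) = \max_{b\in B^+} u_b + F(z)$. The recursion $f(t+1,x) = \max_k f(t,x+e_k) + F(z_{t+1,x})$ gives $\max_k f(t,x+e_k) = f(t+1,x) - F(z_{t+1,x})$. Combining the pointwise inequality $(f(t,x+e_i)-f(t,x+e_j))^+ \le \max_k f(t,x+e_k) - f(t,x+e_j)$ with the exchange symmetry $e_i \leftrightarrow e_j$ of the noise field (under which $f(t,x+e_i)-f(t,x+e_j)$ has a distribution symmetric about zero) yields
\[
\ee|f(t,x+e_i)-f(t,x+e_j)| \le 2\bigl(\ee f(t+1,x) - \ee F(z) - \ee f(t,x)\bigr).
\]
The task becomes bounding this one-step mean increment by $C\sqrt{\log t}$. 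The naive bound from applying the sub-Gaussian concentration of Theorem~\ref{genvarthm} to each $f(t,x+e_k)$ only gives $O(\sqrt{t\log d})$, so an essential refinement is needed: I expect it to come from combining the random walk representation of derivatives from Section~\ref{derivsec} with Talagrand's $L^1$--$L^2$ bound (as used in Theorem~\ref{equivthm2}), exploiting that in LPP the partial derivative $\partial f/\partial z_{s,y}$ is just $F'(z_{s,y})$ times the indicator that the optimal path passes through $(s,y)$, so that the derivatives are highly sparse and hypercontractivity produces a logarithmic gain.

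For claim (2), let $X := f(t,x+e_i) - f(t,x+e_j)$. Applying Theorem~\ref{genvarthm} to each summand gives sub-Gaussian tails $\pp(|X|>r) \le 4 e^{-r^2/(8L^2 t)}$, while claim (1) combined with Markov yields $\pp(|X|>r) \le C\sqrt{\log t}/r$. Splitting $\ee X^2 = \int_0^\infty 2r\pp(|X|>r)\,dr$ at a cutoff $R \asymp \sqrt{L^2 t \log t}$ and using the Markov bound for $r \le R$ and the sub-Gaussian bound for $r > R$ balances to $\ee X^2 \le C\sqrt{t}\log t$. For claim (3), I would apply Theorem~\ref{equivthm2} with $b = e_i - e_j$: claim (2) translates to $\beta_{b,t} \le C\log t/\sqrt{t}$, so $|\log\beta_{b,t}| \ge \tfrac12\log t - \log\log t \ge C'\log t$ for large $t$, yielding $\alpha_t \le C/|\log\beta_{b,t}| \le C''/\log t$, and hence $\var(f(t,x)) = L^2 t\,\alpha_t \le Ct/\log t$.

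The main obstacle is step 1: reducing the naive $O(\sqrt{t})$ bound on the one-step mean increment to $O(\sqrt{\log t})$. Standard concentration alone is insufficient; bridging the gap requires careful use of the hypercontractive $L^1$--$L^2$ machinery together with the sparse indicator structure of LPP partial derivatives. Once that step is in hand, the implications (1)$\Rightarrow$(2)$\Rightarrow$(3) are purely routine applications of Theorems~\ref{genvarthm} and~\ref{equivthm2}.
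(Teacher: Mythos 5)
Your reduction of claim (1) to bounding the one-step mean increment is correct, and your use of the exchange symmetry $e_i\leftrightarrow e_j$ is a clean substitute for the paper's deterministic Lemma~\ref{lppsimple} (which bounds $\max - \text{average}$ from below by the sum of pairwise absolute differences and does not rely on symmetry). Both routes land on the same task: show $\ee f(t+1,x) - \ee f(t,x) - \ee F(z) \le C\sqrt{\log t}$. Your treatment of claims (2) and (3), once (1) is in hand, matches the paper's: integrate the tail $\pp(|X|>r)$ with a Markov bound for small $r$ and the sub-Gaussian bound of Theorem~\ref{genvarthm} for large $r$, then feed the resulting $\beta_{b,t}\le C\log t/\sqrt{t}$ into Theorem~\ref{equivthm2}.

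However, the key step for claim (1) --- what you yourself flag as ``the main obstacle'' --- is a genuine gap, and the tool you reach for does not do the job. Talagrand's $L^1$--$L^2$ inequality and the random-walk/sparse-derivative representation control \emph{variances}; they give no handle on a \emph{difference of means} like $\ee f(t+1,x)-\ee f(t,x)$. You cannot reduce this drift to a variance without circularity, since $\ee(\max_k Y_k - \bar Y)$ is bounded below and above by sums of $\ee(Y_k-Y_j)^+$, which is exactly the quantity you are trying to bound. The paper's argument for this step is entirely different and elementary: since $\ee F(z)=0$, pick $u^*$ with $F(u^*)=0$ and define $\tf$ by replacing the entire last layer of noise $z_{1,\cdot}$ by $u^*$. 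Two observations close the argument. First, $\tf(t,x)$ has the same law as $f(t-1,x)$, because setting $F(z_{1,\cdot})\equiv 0$ truncates every path sum to its first $t-1$ weights, and those weights are an i.i.d.\ copy of the environment used to build $f(t-1,\cdot)$. Second, $|f(t,x)-\tf(t,x)| \le \max_Q |F(z_{1,x+q_{t-1}})| \le \max_{|y|\le t}|F(z_{1,x+y})|$, a maximum of polynomially many Lipschitz-of-Gaussian variables, hence of expectation $O(\sqrt{\log t})$. This yields $\ee f(t,x)-\ee f(t-1,x)\le C\sqrt{\log t}$ directly, with no hypercontractivity. You should replace the $L^1$--$L^2$ route with this replacement-of-the-final-layer argument; as proposed, your proof of (1) does not go through.
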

Theorem \ref{lppthm} is proved in Section \ref{lppproof}. The proof of the first claim uses a new argument that is simpler than the proofs of similar claims in~\cite{chatterjee08, chatterjee14, graham12} and gives a better bound. The second claim follows from the first by combining with Theorem~\ref{genvarthm}, and the third claim follows from the second by Theorem \ref{equivthm2}. 

\subsection{Directed polymers}\label{polymersec}
The framework of this paper includes the model of $(d+1)$-dimensional directed polymers in an i.i.d.~random environment~\cite{comets17}, for any $d\ge 1$, as long as the law of the environment can be expressed as the pushforward of the standard Gaussian measure under a  Lipschitz map. Recall that this model is defined as follows. Let $(w_{t,x})_{t\in \zz_{\ge0}, x\in \zz^d}$ be a collection of i.i.d.~random variables, called the `environment', as in LPP. Let $\cp_t$ be the set of all paths of length $t$ started at the origin --- that is, all $P = (p_0,\ldots, p_t)\in (\zz^d)^t$ such that $p_0=0$ and $|p_i-p_{i-1}| = 1$ for all $i\ge 1$. The directed polymer model assigns a random probability measure on $\cp_t$, with a path $P = (p_0,\ldots, p_t)$ assigned a probability proportional to 
\[
\exp\biggl(\beta \sum_{i=0}^{t-1} w_{i,p_i}\biggr),
\]
where $\beta$ is a parameter known as the `inverse temperature' of the model. A key object of interest in this model is  the partition function $Z_t$, defined as
\[
Z_t := \sum_{P\in \cp_t} \exp\biggl(\beta \sum_{i=0}^{t-1} w_{i,p_i}\biggr). 
\]
To capture the logarithm of the partition function of the directed polymer model in our framework, we take
\[
\phi(u,z) = \frac{1}{\beta}\log \biggl(\sum_{b\in B} e^{\beta u_b}\biggr) + F(z),
\]
where $F$ is the Lipschitz map the transforms the standard Gaussian measure into the law of the environment (assuming, as before, that such a map exists). If $f$ is the random surface generated with this driving function and the environment $\mathbf{z}$, and zero initial condition, then a simple induction shows that
\[
f(t,x) = \frac{1}{\beta}\log \biggl[\sum_{P\in \cp_t} \exp\biggl(\beta \sum_{i=0}^{t-1} F(z_{t-i, x+p_i})\biggr)\biggr].
\]
It is not hard to see that $f(t,x)$ has the same law as $\beta^{-1}\log Z_t$. It is also easy to verify that $\phi$ is equivariant under constant shifts, monotone in the first argument, and Lipschitz continuous in the second argument. 

Superconcentration of the log partition function of the directed polymer model, with the bound $\var(\log Z_t)\le Ct/\log t$, was proved by \citet{alexanderzygouras13}. There is a version of this model for $\beta=\infty$, known as the `directed polymer at zero temperature'. For the zero temperature model, superconcentration was proved in~\cite{chatterjee08, chatterjee14} for $d=1$, and in \cite{graham12} for all $d$. 

The following theorem reproves the superconcentration of the log partition function of the directed polymer model at finite $\beta$ using the techniques of this paper, along with a novel subroughness bound that may be of independent interest. The zero temperature case, being very similar to LPP, is omitted. 
\begin{thm}\label{polymerthm}
Consider the surface $f$ generated by the directed polymer model defined above. Let $C$ denote any constant that depends only on the dimension $d$, the inverse temperature $\beta$, and the law of the environment. Then, we have the following bounds:
\begin{enumerate}
\item[$(1)$] For any $t\ge 2$ and $x,y\in \zz^d$ with $|x-y|_1=2$, $\ee|f(t,x)-f(t,y)|\le C$.
\item[$(2)$] For any $t\ge 2$ and $x,y\in \zz^d$ with $|x-y|_1=2$, $\ee[(f(t,x)-f(t,y))^2]\le C\sqrt{t\log t}$.
\item[$(3)$] For any $t\ge 2$ and $x\in \zz^d$, $\var(f(t,x))\le Ct/\log t$. 
\end{enumerate}
\end{thm}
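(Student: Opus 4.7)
The proof has three parts, and parts (2) and (3) follow from part (1) by the same cascade used in Theorems \ref{ballthm1} and \ref{lppthm}, so the real content is the subroughness bound (1). The plan for part (1) is to exploit the log-sum-exp structure of the polymer driving function together with the fact that $y$ is reachable from $x$ in exactly two nearest-neighbor steps when $|x-y|_1=2$. Writing $Z_t(x) = \sum_{P\in\cp_t}\exp\bigl(\beta \sum_{i=0}^{t-1} F(z_{t-i, x+p_i})\bigr)$, the two-step decomposition reads
\[
Z_{t+2}(x) = e^{\beta F(z_{t+2,x})} \sum_{(s_1,s_2)\in B\times B} e^{\beta F(z_{t+1, x+s_1})}\, Z_t(x + s_1 + s_2),
\]
which expresses $Z_{t+2}(x)$ as a positive linear combination of $Z_t$ at points within $\ell^1$-distance $2$ of $x$, with random coefficients that depend only on the two most recent noise layers. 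Restricting the sum to pairs $(s_1,s_2)$ with $s_1+s_2=y-x$ yields $Z_{t+2}(x)\ge \xi_{x,y}\,Z_t(y)$ for a random $\xi_{x,y}$ with $\ee|\log\xi_{x,y}|=O(1)$, and by symmetry a matching bound with $x,y$ swapped. Taking logarithms gives comparisons between $f(t+2,\cdot)$ and $f(t,\cdot)$; combining these pairs of inequalities and cancelling the deterministic linear drift $\ee f(t+2,\cdot)-\ee f(t,\cdot)=2\gamma$ (the free energy per step, existing by subadditivity), the $O(\sqrt t)$-scale noise cancels and one extracts a same-time estimate $\ee|f(t,x)-f(t,y)| \le C$.

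For part (2), let $Y:=f(t,x)-f(t,y)$; translation invariance of the i.i.d.~environment gives $\ee Y=0$, and Theorem \ref{genvarthm} applied to each of $f(t,x)$ and $f(t,y)$ shows that $Y$ has a subgaussian tail $\pp(|Y|\ge r)\le 4e^{-r^2/(CL^2 t)}$. Given $\ee|Y|\le C$ from part (1), I would write $\ee Y^2=\int_0^\infty 2r\,\pp(|Y|\ge r)\,dr$ and split at a threshold $r_0\asymp\sqrt{t\log t}$, using Markov below $r_0$ (contribution $\lesssim C r_0$) and the subgaussian tail above (contribution $\lesssim L^2 t\, e^{-r_0^2/(CL^2 t)}$), optimizing to obtain $\ee Y^2 = O(\sqrt{t\log t})$. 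Part (3) then follows directly from Theorem \ref{equivthm2}: part (2) gives $\beta_{b,t}=O(\sqrt{(\log t)/t})$, so $|\log\beta_{b,t}|\gtrsim \log t$, whence $\alpha_t\le C/|\log\beta_{b,t}|=O(1/\log t)$, i.e.\ $\Var f(t,x)\le Ct/\log t$.

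The main obstacle is the same-time step in part (1). The two-step comparison only produces $f(t+2,x)\ge f(t,y)+\beta^{-1}\log\xi_{x,y}$, which compares $f$ at different times, and a naive bound on $f(t+2,x)-f(t,x)$ would lose $O(\sqrt t)$ and be useless. The resolution has to come from carefully pairing the lower bound with the symmetric bound $f(t+2,y)\ge f(t,x)+\beta^{-1}\log\xi_{y,x}$: adding the two makes the linear drift $2\gamma$ cancel on both sides, leaving only an $O(1)$ random remainder whose expectation in absolute value is controlled because it depends on only two noise layers. Translating that cancellation into a clean bound on $\ee|f(t,x)-f(t,y)|$ (rather than on a symmetric combination) is the main delicate point; once executed, the rest of the theorem follows from the established machinery.
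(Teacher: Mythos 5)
Your parts (2) and (3) track the paper's cascade (subgaussian tail from Theorem \ref{genvarthm}, split the moment integral at $r_0\asymp\sqrt{t\log t}$, then feed the resulting $\beta_{b,t}$ into Theorem \ref{equivthm2}); those are fine. The gap is in part (1), and it is not the small ``delicate point'' you defer at the end --- it is the whole argument. Pairing $f(t+2,x)\ge f(t,y)+\beta^{-1}\log\xi_{x,y}$ with $f(t+2,y)\ge f(t,x)+\beta^{-1}\log\xi_{y,x}$ and adding gives control on the \emph{symmetric sum} $f(t+2,x)+f(t+2,y)-f(t,x)-f(t,y)$, whose expectation is just twice the two-step drift ($\approx 4\gamma$). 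Translation invariance already forces $\ee f(t,x)=\ee f(t,y)$, so this inequality contains zero information about $\ee|f(t,x)-f(t,y)|$; there is no linear combination of your two one-sided inequalities that isolates the difference rather than the sum. You correctly identify that a naive time-shift loses $O(\sqrt t)$, but the symmetric pairing does not rescue this --- it is simply the wrong algebraic structure.

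The mechanism that actually works, and that the paper uses, is a one-step convexity argument rather than a two-step comparison. Since the environment has mean zero, the driving recursion gives an \emph{exact} decomposition of the one-step drift,
\[
\ee f(t,x)-\ee f(t-1,x)\;=\;\beta^{-1}\log(2d)\;+\;\ee\Bigl[\tfrac{1}{\beta}\log\bigl(\tfrac{1}{2d}\textstyle\sum_{b\in B} e^{\beta f(t-1,x+b)}\bigr)-\tfrac{1}{2d}\textstyle\sum_{b\in B} f(t-1,x+b)\Bigr],
\]
where the bracketed quantity is the Jensen gap of the log-sum-exp and is \emph{nonnegative}. Lemma \ref{polymersimple} bounds this gap below by $c\min\{\sum|f(t-1,x+b)-f(t-1,x+b')|,\,\sum(\cdot)^2\}$, which is exactly the spread at $\ell^1$-distance $2$. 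The other ingredient is an \emph{upper} bound $\ee f(t,x)-\ee f(t-1,x)\le\beta^{-1}\log(2d)+C$, obtained by comparing $f$ with a surface $\tf$ where the bottom noise layer is frozen at $u^*$ with $F(u^*)=0$ (so $\tf(t,x)\overset{d}{=}f(t-1,x)+\beta^{-1}\log(2d)$), and then applying Jensen in the conditional expectation over the bottom layer. Combining the exact decomposition, the positivity of the gap, and the drift upper bound pins the expected gap at $O(1)$ and hence the expected spread at $O(1)$. In short: ``the drift exceeds $\beta^{-1}\log(2d)$ only by the Jensen gap; the gap dominates the spread; the drift is bounded; hence the spread is bounded.'' Nothing in your proposal plays the role of the nonnegativity of the Jensen gap or the drift \emph{upper} bound, and without both, the argument does not close.
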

This theorem is proved in Section \ref{polymerproof}. The proof of the first claim uses a new argument. The second claim follows from the first claim and Theorem \ref{genvarthm}, whereas the third claim follows from the second and Theorem \ref{equivthm2}. 

\section{Random walk representation of derivatives}\label{derivsec}
For $1\le s\le t$ and $x,y\in \zz^d$, we will now compute the partial derivative of $f(t,x)$ with respect to $z_{s,y}$, assuming that the driving function is differentiable. It turns out that the derivative is expressible in terms of the transition probabilities of a certain kind of random walk. This random walk representation is crucial for all subsequent analyses. We need the equivariance and monotonicity properties for the proof, but not the Lipschitz property. 

Throughout this section, let $\phi$ be a monotone, equivariant, and differentiable driving function. Writing an element of $\rr^A\times\rr$ as $(u, z)$, where $u = (u_a)_{a\in A}\in \rr^A$ and $z\in \rr$, let $\partial_a \phi$ denote the partial derivative of $\phi(u,z)$ with respect to $u_a$, and let $\partial_z\phi$ denote the partial derivative of $\phi$ with respect to $z$. The following lemma records two important properties of these derivatives, which are consequences of the equivariance and monotonicity properties of $\phi$. 
\begin{lmm}\label{problmm}
For any $(u,z)\in \rr^A \times \rr$, $\partial_a\phi(u,z)\ge 0$ for each $a\in A$, and 
\[
\sum_{a\in A} \partial_a\phi(u,z)=1.
\]
\end{lmm}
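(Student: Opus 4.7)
The plan is to read off both conclusions directly from the two structural assumptions on $\phi$, namely monotonicity and equivariance, with differentiability letting us pass from finite differences to derivatives.

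For the nonnegativity statement, I would start from the monotonicity property. Fix $a \in A$, $z \in \rr$, and $u \in \rr^A$, and for $h > 0$ consider the vector $u + h e_a'$, where $e_a'$ denotes the indicator vector in $\rr^A$ supported at $a$ (not to be confused with the lattice basis vectors). Since $u + h e_a'$ dominates $u$ coordinatewise, monotonicity gives $\phi(u + h e_a', z) \ge \phi(u, z)$. Dividing by $h$ and letting $h \downarrow 0$ yields $\partial_a \phi(u, z) \ge 0$.

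For the sum-to-one identity, the natural tool is equivariance. The relation $\phi(u + c, z) = \phi(u, z) + c$ holds for every scalar $c$, so both sides are differentiable functions of $c$. Differentiating at $c = 0$ on the right gives $1$, while on the left, by the chain rule applied to the map $c \mapsto u + c = (u_a + c)_{a \in A}$, the derivative is $\sum_{a \in A} \partial_a \phi(u, z)$. Equating the two yields the desired identity.

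There is no real obstacle here; the main point is just to recognize that the two hypotheses on $\phi$ are essentially designed so that $(\partial_a \phi(u,z))_{a \in A}$ is a probability distribution on $A$, which is precisely the fact that will enable the random walk representation developed in the rest of Section \ref{derivsec}. The only mild care needed is to justify interchanging limits and partial derivatives, but this is automatic from the assumed differentiability of $\phi$.
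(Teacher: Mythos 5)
Your proposal is correct and follows essentially the same argument as the paper: nonnegativity from the definition of the partial derivative as a limit of nonnegative difference quotients (via monotonicity), and the sum-to-one identity by differentiating the equivariance relation $\phi(u+c,z) = \phi(u,z)+c$ in $c$ at $c=0$ and applying the chain rule. The paper merely packages the second step by naming the auxiliary function $g(c) = \phi(u+c,z)$, which is cosmetic.
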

\begin{proof}
Fix $(u,z)$. Define a function $g:\rr \to \rr$ as $g(t) := \phi(u+t, z)$ (recall that $u+t$ is the vector obtained by adding $t$ to each coordinate of $u$). By the equivariance of $\phi$, we have that $g(t) = \phi(u,z)+t$. Thus, $g'(t) = 1$ for all $t$. On the other hand, by the definition of $g$,
\[
g'(t) = \sum_{a\in A} \partial_a\phi(u+t,z).
\]
Thus, 
\[
\sum_{a\in A} \partial_a \phi(u,z) = g'(0)=1.
\]
The nonnegativity of $\partial_a \phi(u,z)$ follows from the monotonicity of $\phi$.
\end{proof}



Let $f$ be a growing random surface defined according to \eqref{kpzevolve2}. For any $t\in \zz_{\ge 0}$ and $x\in \zz^d$, define a random walk on $\zz^d$ as follows. The walk starts at $x$ at time $t$, and goes backwards in time, until reaching time $0$. If the walk is at location $y\in \zz^d$ at time $s\ge 1$, then at time $s-1$ it moves to $y+a$ with probability $\partial_a\phi((f(s-1,y+a))_{a\in A},z_{s,y})$, for $a\in A$. By Lemma \ref{problmm}, these numbers are nonnegative and sum to $1$ when summed over $a\in A$. Therefore, this describes a legitimate random walk on $\zz^d$, moving backwards in time. 

\begin{prop}\label{derivprop}
Take any $1\le s\le t$ and $x,y\in \zz^d$. Let $\{S_r\}_{0\le r\le t}$ be the backwards random walk defined above, started at $x$ at time $t$. Then 
\begin{align*}
\fpar{}{z_{s,y}}f(t,x) &= \pp(S_s = y) \partial_z \phi((f(s-1, y+a))_{a\in A}, z_{s,y}). 
\end{align*}
\end{prop}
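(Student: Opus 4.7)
The plan is to prove the random walk representation by induction on $t - s \ge 0$, running the chain rule through the one-step update equation \eqref{kpzevolve2} and then recognizing the resulting weighted sum as a first-step decomposition for the backwards walk $S$.

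The base case is $s = t$. Here $f(t,x) = \phi((f(t-1,x+a))_{a\in A}, z_{t,x})$; the functions $f(t-1, x+a)$ depend only on the noise variables at times $\le t-1$, so $\partial f(t,x) / \partial z_{t,y} = 0$ unless $y = x$, in which case the derivative is $\partial_z \phi((f(t-1, x+a))_{a\in A}, z_{t,x})$. Since the walk started at $x$ at time $t$ satisfies $\pp(S_t = y) = \1_{\{y=x\}}$, this matches the claimed formula.

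For the inductive step, fix $s < t$ and assume the formula holds for $t' = t - 1$ for every starting point. Since $z_{s,y}$ is not the same random variable as $z_{t,x}$, differentiating \eqref{kpzevolve2} by the chain rule gives
\eq{
\fpar{}{z_{s,y}} f(t,x) = \sum_{a \in A} \partial_a\phi\bigl((f(t-1,x+a'))_{a'\in A}, z_{t,x}\bigr) \cdot \fpar{}{z_{s,y}} f(t-1, x+a).
}
By Lemma \ref{problmm} the coefficients $p_a := \partial_a \phi((f(t-1,x+a'))_{a'\in A}, z_{t,x})$ are nonnegative and sum to $1$; they are, by construction, exactly the one-step transition probabilities of $S$ from $x$ at time $t$ to $x+a$ at time $t-1$. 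Applying the inductive hypothesis to each $\partial f(t-1, x+a)/\partial z_{s,y}$ in terms of the backwards walk $S^{(a)}$ started at $x+a$ at time $t-1$, and then substituting, factors out the common $\partial_z \phi((f(s-1, y+b))_{b\in A}, z_{s,y})$ and leaves
\eq{
\fpar{}{z_{s,y}} f(t,x) = \Bigl(\sum_{a\in A} p_a \, \pp(S_s^{(a)} = y)\Bigr) \partial_z \phi\bigl((f(s-1, y+b))_{b\in A}, z_{s,y}\bigr).
}
By the Markov property / first-step analysis for $S$, the bracketed sum equals $\pp(S_s = y)$, which closes the induction.

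I do not expect any real obstacle: the proof is essentially bookkeeping, with the only conceptual point being the identification of $\partial_a \phi$ with transition probabilities (guaranteed by Lemma \ref{problmm}). The only mild subtlety is handling measurability/conditioning carefully when writing $\pp(S_s = y)$, since the transition probabilities are themselves random (they depend on $f$ and on the noise field); one should view the walk as built on the same probability space as $\mathbf{z}$, with its transition kernel determined by the realized values of $f$, so that $\pp(S_s = y)$ really means the quenched probability given $\mathbf{z}$. With that convention the induction goes through without further issue.
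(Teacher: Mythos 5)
Your proof is correct and follows essentially the same route as the paper's: induction (you on $t-s$, the paper on $t$, which is equivalent here), the chain rule applied to the one-step update, Lemma \ref{problmm} to identify $\partial_a\phi$ with transition probabilities, and a first-step decomposition to recombine $\sum_a p_a\,\pp(S_s^{(a)}=y)$ into $\pp(S_s=y)$. Your closing remark about reading $\pp(S_s=y)$ as a quenched probability given $\mathbf{z}$ is exactly the convention the paper uses.
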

\begin{proof}
The proof is by induction on $t$. First, suppose that $t=1$. Then $s$ must also be equal to $1$. Moreover, $f(t,x)$ has no dependence on $z_{t,y}$ if $y\ne x$, and so the partial derivative is zero if $y\ne x$. If $y=x$, then by the definition \eqref{kpzevolve2} of $f(t,x)$, it follows that
\begin{align*}
\fpar{}{z_{s,y}}f(t,x) &=  \partial_z \phi((f(t-1, x+a))_{a\in A}, z_{t,x})\\
&= \pp(S_s = x) \partial_z \phi((f(s-1, y+a))_{a\in A}, z_{s,y})
\end{align*}
since $s=t$, $x=y$, and $\pp(S_s = x)=1$. Thus, the claim holds when $t=1$.

Now suppose that the claim has been proved up to time $t-1$. If $s=t$, the proof is the same as in the previous paragraph. So assume that $s<t$. By \eqref{kpzevolve2} and the chain rule for differentiation,
\begin{align*}
\fpar{}{z_{s,y}}f(t,x) &= \sum_{a\in A} \partial_a \phi((f(t-1, x+a))_{a\in A}, z_{t,x}) \fpar{}{z_{s,y}}f(t-1,x+a)\\
&= \sum_{a\in A} \pp(S_{t-1} =x+a) \fpar{}{z_{s,y}}f(t-1,x+a). 
\end{align*}
For each $a\in A$, let $S^a$ be the backwards random walk started at $x+a$ at time $t-1$. Then by the induction hypothesis for time $t-1$,
\begin{align*}
\fpar{}{z_{s,y}}f(t-1,x+a) &=  \pp(S^a_s = y) \partial_z \phi((f(s-1, y+a))_{a\in A}, z_{s,y}). 
\end{align*}
Combining the previous two displays, we get
\begin{align*}
&\fpar{}{z_{s,y}}f(t,x)\\
&= \partial_z \phi((f(s-1, y+a))_{a\in A}, z_{s,y})\sum_{a\in A} \pp(S_{t-1}=x+a) \pp(S^a_s = y).
\end{align*}
But from the definition of the random walks, it is not hard to see that the law of $S^a$ is the same as the law of $S$ given $S_{t-1}=x+a$. Thus,
\begin{align*}
&\sum_{a\in A} \pp(S_{t-1}=x+a) \pp(S^a_s = y) \\
&= \sum_{a\in A} \pp(S_{t-1}=x+a) \pp(S_s = y| S_{t-1}=x+a) = \pp(S_s=y). 
\end{align*}
Combining this with the previous display completes the proof.
\end{proof}

\section{Proof of Theorem \ref{genvarthm}}\label{genvarthmproof}
Let us first prove the theorem under the assumption that $\phi$ is differentiable.
\begin{lmm}\label{genvarlmm}
The conclusions of Theorem \ref{genvarthm} hold if, in addition to the stated hypotheses, we also have that $\phi$ is differentiable.
\end{lmm}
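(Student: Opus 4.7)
\textbf{Proof plan for Lemma \ref{genvarlmm}.} The plan is to combine the random walk representation of Proposition \ref{derivprop} with standard Gaussian concentration (Borell–TIS) applied on the finite-dimensional Gaussian space that $f(t,x)$ actually depends on. Note that by induction on $t$, $f(t,x)$ depends only on the finite collection of noise variables $\{z_{s,y} : 1 \le s \le t,\ |y-x|_1 \le t-s+1\}$, so we may regard $f(t,x)$ as a smooth function on a finite-dimensional Gaussian space, and every bound below will be uniform in the ambient dimension.

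\emph{Step 1: pointwise derivative bound.} Fix $t \ge 1$ and $x \in \zz^d$. Using Proposition \ref{derivprop}, for each $1 \le s \le t$ and $y \in \zz^d$,
\[
\fpar{}{z_{s,y}} f(t,x) = \pp(S_s = y)\, \partial_z \phi\bigl((f(s-1,y+a))_{a \in A}, z_{s,y}\bigr),
\]
where $\{S_r\}$ is the backwards random walk started at $x$ at time $t$. Since $\phi$ is $L$-Lipschitz in its second argument, $|\partial_z \phi| \le L$ everywhere.

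\emph{Step 2: squared-gradient bound.} Because $\pp(S_s = y) \in [0,1]$, we have $\pp(S_s = y)^2 \le \pp(S_s = y)$, so for every fixed $s$,
\[
\sum_{y \in \zz^d}\left(\fpar{}{z_{s,y}} f(t,x)\right)^{\!2} \le L^2 \sum_{y \in \zz^d} \pp(S_s=y)^2 \le L^2 \sum_{y \in \zz^d} \pp(S_s=y) = L^2.
\]
Summing over $s = 1,\ldots,t$ gives the key pointwise estimate
\[
\sum_{s=1}^t \sum_{y \in \zz^d}\left(\fpar{}{z_{s,y}} f(t,x)\right)^{\!2} \le L^2 t,
\]
valid at every realization of the noise.

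\emph{Step 3: conclusion via Gaussian concentration.} The bound in Step 2 says that, as a function of the underlying Gaussian vector, $f(t,x)$ is Lipschitz with constant $L\sqrt{t}$ with respect to the Euclidean norm. The variance bound $\var(f(t,x)) \le L^2 t$ follows immediately from the Gaussian Poincar\'e inequality applied to the finite-dimensional projection. The MGF bound
\[
\ee\bigl(e^{\theta(f(t,x) - \ee f(t,x))}\bigr) \le e^{L^2 t \theta^2/2}
\]
and the corresponding two-sided tail bound
\[
\pp(|f(t,x) - \ee f(t,x)| \ge r) \le 2 e^{-r^2/2L^2 t}
\]
both follow from the Borell–Tsirelson–Ibragimov–Sudakov inequality (equivalently, the Herbst argument from the Gaussian log-Sobolev inequality) applied to this $L\sqrt{t}$-Lipschitz function.

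\emph{Main obstacle.} The substance of the lemma is entirely in the pointwise squared-gradient bound; once Proposition \ref{derivprop} is in hand, the remaining work is essentially a sum of bookkeeping steps. The one subtlety to handle carefully is the observation that although the noise field is indexed by an infinite set, $f(t,x)$ depends on only finitely many coordinates, so that the classical Gaussian concentration machinery applies without modification. Everything else is standard.
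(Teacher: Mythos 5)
Your proposal is correct and follows essentially the same route as the paper: apply Proposition \ref{derivprop}, bound the squared gradient by $L^2 t$ pointwise in the noise, and conclude via Gaussian Poincar\'e and Gaussian concentration on the finite-dimensional space that $f(t,x)$ actually depends on. The only cosmetic difference is in Step 2, where you use $\pp(S_s=y)^2 \le \pp(S_s=y)$ and normalization directly, whereas the paper phrases the same bound by introducing an independent copy $S'$ of the walk and writing $\sum_y \pp'(S_s=y)^2 = \pp'(S_s = S'_s) \le 1$; these are the same estimate.
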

\begin{proof}
Fix $t$ and $x$. Conditioning on the randomness due to the noise variables, let $S= \{S_s\}_{0\le s\le t}$ be the random walk started at $x$ at time $t$ and moving backwards in time, defined in Section \ref{derivsec}. Let $S' = \{S_s'\}_{0\le s\le t}$ be an independent copy of $S$ (conditional on the noise variables). It is not hard to see that $f(t,x)$ is a function of only finitely many of the noise variables. Moreover, by the uniform Lipschitz property, $|\partial_z\phi|$ is uniformly bounded by $L$. Let $\pp'$ denote conditional probability given the noise variables, and let $\ee'$ denote the conditional expectation. Then by Proposition \ref{derivprop} and the above observations, we have
\begin{align*}
\sum_{s=1}^t \sum_{y\in \zz^d} \biggl(\fpar{}{z_{s,y}}f(t,x)\biggr)^2 &\le L^2\sum_{s=1}^t \sum_{y\in \zz^d}(\pp'(S_s = y))^2\\
&= L^2\sum_{s=1}^t \sum_{y\in \zz^d}\pp'(S_s = y, \, S'_s=y)\\
&= L^2\sum_{s=1}^t \pp'(S_s = S_s')\\
&=L^2 \ee'|\{1\le s\le t: S_s = S_s'\}|\le L^2 t. 
\end{align*}
Thus, as a function of the noise variables, $f(t,x)$ is differentiable and Lipschitz with respect to the Euclidean metric, with Lipschitz constant bounded by $L\sqrt{t}$. The claims now follow easily by the Gaussian Poincar\'e inequality and the Gaussian concentration inequality (see \cite[Chapter 2 and Appendix A]{chatterjee14}). 
\end{proof}
To drop the differentiability requirement, several lemmas are needed. Throughout, we work under the hypotheses of Theorem \ref{genvarthm}.
\begin{lmm}\label{liplmm}
The function $\phi$ is Lipschitz with Lipschitz constant $L+1$ with respect to the $\ell^\infty$ norm on $\rr^A \times \rr$. 
\end{lmm}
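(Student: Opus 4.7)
The plan is to split the bound using the triangle inequality into a piece that controls the variation in $u$ and a piece that controls the variation in $z$, and handle each via a different property of $\phi$.

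First I would write, for any $(u,z),(u',z')\in\rr^A\times\rr$,
\[
|\phi(u,z)-\phi(u',z')| \le |\phi(u,z)-\phi(u',z)| + |\phi(u',z)-\phi(u',z')|.
\]
The second term is immediately bounded by $L|z-z'|$ by the Lipschitz-in-noise hypothesis. For the first term, I would use the combination of monotonicity and equivariance. Setting $c := \|u-u'\|_\infty$, we have the coordinatewise sandwich $u'-c \le u \le u'+c$. By monotonicity of $\phi$ in the first variable this gives
\[
\phi(u'-c,z) \le \phi(u,z) \le \phi(u'+c,z),
\]
and by equivariance under constant shifts the left and right sides simplify to $\phi(u',z)-c$ and $\phi(u',z)+c$ respectively. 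Hence $|\phi(u,z)-\phi(u',z)| \le c = \|u-u'\|_\infty$.

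Combining the two estimates,
\[
|\phi(u,z)-\phi(u',z')| \le \|u-u'\|_\infty + L|z-z'| \le (L+1)\max\bigl(\|u-u'\|_\infty,\, |z-z'|\bigr),
\]
which is exactly Lipschitz continuity with constant $L+1$ with respect to the $\ell^\infty$ norm on $\rr^A\times\rr$. There is no real obstacle here; the only small conceptual step is recognizing that monotonicity plus equivariance together yield $1$-Lipschitz dependence on $u$ in the $\ell^\infty$ sense, which is precisely the kind of sup-norm stability that makes viscosity-solution-type arguments work and which is exactly what we need to later apply a mollification argument to reduce Theorem \ref{genvarthm} to the differentiable case handled in Lemma \ref{genvarlmm}.
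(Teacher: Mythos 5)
Your proof is correct and follows essentially the same argument as the paper: monotonicity plus equivariance give $1$-Lipschitz dependence on $u$ in the $\ell^\infty$ norm, and then the triangle inequality combines this with the $L$-Lipschitz dependence on $z$. The only cosmetic difference is that you sandwich $u$ between $u'\pm c$, while the paper sandwiches both $u$ and $v$ between $s$ and $s+c$ where $s$ is the coordinatewise minimum; the two sandwiches are interchangeable.
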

\begin{proof}
Take any $z\in \rr$ and $u,v\in \rr^A$. For each $a\in A$, let $s_a := \min\{u_a, v_a\}$. Let $s:= (s_a)_{a\in A}$. Let $c:= \max_{a\in A} |u_a-v_a|$. Then $u_a$ and $v_a$ are both in the interval $[s_a, s_a+c]$ for each $a\in A$. Thus, by the monotonicity of $\phi$, $\phi(u,z)$ and $\phi(v,z)$ are both lower bounded by $\phi(s,z)$ and upper bounded by $\phi(s+c, z)$. But by equivariance, $\phi(s+c,z)=\phi(s,z)+c$. This shows that 
\[
|\phi(u,z)-\phi(v,z)| \le c = \|u-v\|_{\ell^\infty}.
\] 
Thus, for any $u,v\in \rr^A$ and $z,z'\in \rr$, we have
\begin{align*}
|\phi(u,z)-\phi(v,z')| &\le |\phi(u,z)-\phi(v,z)| + |\phi(v,z)-\phi(v,z')|\\
&\le \|u-v\|_{\ell^\infty} + L|z-z'|\\
&\le (L+1)\|(u,z)-(v,z')\|_{\ell^\infty},
\end{align*}
which proves the claim. 
\end{proof}

Let $h:\rr^A\times \rr\to [0,\infty)$ be a $C^\infty$ function with compact support, which integrates to $1$. For each $\ve >0$, define the function $h_\ve(x) := \ve^{-d}h(\ve^{-1} x)$. Note that $h_\ve$ is also nonnegative, smooth, and integrates to $1$. Let $\phi_\ve$ be the convolution of $\phi$ with $h_\ve$, that is, for any $x$,
\begin{align}\label{convform}
\phi_\ve(x) = \int h_\ve(x-y) \phi(y) dy = \int \phi(x-y)h_\ve(y)dy.
\end{align}
\begin{lmm}\label{phieplmm}
For any $\ve >0$, $\phi_\ve$ is a differentiable function. Moreover, it has the monotonicity and equivariance properties, and is Lipschitz in the noise variable with Lipschitz constant $L$.
\end{lmm}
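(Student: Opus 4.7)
The plan is to use the convolution representation $\phi_\ve(x) = \int \phi(x-y) h_\ve(y)\,dy$ (the second form in \eqref{convform}) and transfer each of the four properties of $\phi$ to $\phi_\ve$ by integrating the corresponding pointwise property against the nonnegative mollifier $h_\ve$ of unit mass. Writing points in $\rr^A\times \rr$ as $(u,z)$ with $u\in \rr^A$ and splitting the dummy variable as $y=(y_1,y_2)$, we have $\phi(x-y)=\phi(u-y_1, z-y_2)$. A preliminary observation is that the integral is well defined: by Lemma \ref{liplmm}, $\phi$ is globally Lipschitz, hence locally bounded, and $h_\ve$ has compact support, so $\phi_\ve$ is well defined and locally bounded.

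For \emph{differentiability}, since $h_\ve\in C^\infty_c$, we may differentiate under the integral sign in the first form of \eqref{convform} to obtain $\partial^\alpha \phi_\ve(x) = \int \phi(y)\,\partial^\alpha h_\ve(x-y)\,dy$ for any multi-index $\alpha$; the domination needed for the dominated convergence theorem is provided on any bounded neighborhood of $x$ by the local boundedness of $\phi$ together with the fact that $\partial^\alpha h_\ve$ has compact support. In particular $\phi_\ve \in C^\infty$. For \emph{monotonicity}, if $u\ge v$ coordinatewise, then $u-y_1\ge v-y_1$ coordinatewise for every $y_1$, so monotonicity of $\phi$ gives $\phi(u-y_1,z-y_2)\ge \phi(v-y_1,z-y_2)$ pointwise in $y$; since $h_\ve\ge 0$, integration preserves the inequality and yields $\phi_\ve(u,z)\ge \phi_\ve(v,z)$.

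For \emph{equivariance}, the identity $\phi(u+c-y_1,z-y_2)=\phi(u-y_1,z-y_2)+c$ holds pointwise in $y$ for each $c\in\rr$, and integrating against $h_\ve$, which has total mass $1$, gives $\phi_\ve(u+c,z)=\phi_\ve(u,z)+c$. For the \emph{Lipschitz} bound in the noise variable, from $|\phi(u-y_1,z-y_2)-\phi(u-y_1,z'-y_2)|\le L|z-z'|$ we obtain
\[
|\phi_\ve(u,z)-\phi_\ve(u,z')|\le \int |\phi(u-y_1,z-y_2)-\phi(u-y_1,z'-y_2)|\,h_\ve(y)\,dy \le L|z-z'|,
\]
again using $h_\ve\ge 0$ and $\int h_\ve=1$. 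There is no substantive obstacle here; the only point requiring a moment's thought is convergence of the convolution, and that is handled as noted above via Lemma \ref{liplmm}.
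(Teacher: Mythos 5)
Your proof is correct and follows essentially the same route as the paper: use the first integral in \eqref{convform} plus dominated convergence to get differentiability, and use the second integral together with $h_\ve\ge 0$ and $\int h_\ve = 1$ to transfer monotonicity, equivariance, and the noise-Lipschitz bound from $\phi$ to $\phi_\ve$. You spell out the dominated-convergence justification in more detail than the paper, but the argument is the same.
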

\begin{proof}
By Lemma \ref{liplmm}, $\phi$ is Lipschitz. In particular, it is continuous and hence bounded on compact sets. Since $h_\ve$ has compact support, it is now easy to use the first integral in \eqref{convform} and the dominated convergence theorem to deduce that $\phi_\ve$ is differentiable everywhere.  From the second integral in \eqref{convform} and the fact that $h_\ve$ is nonnegative and integrates to $1$, it follows that $\phi_\ve$ is monotone, equivariant, and Lipschitz in the noise variable with Lipschitz constant $L$. 
\end{proof}
Let $f_\ve$ be the growing random surface generated by the driving function $\phi_\ve$, the noise variables $z_{t,x}$, and  initial value $f_\ve(0,x) = f(0,x)$ for all $x$. Combining the above lemma with Lemma~\ref{genvarlmm}, we get the following corollary about $f_\ve$.
\begin{cor}\label{genvarcor}
The conclusions of Theorem \ref{genvarthm} hold for $f_\ve$, for any $\ve >0$.
\end{cor}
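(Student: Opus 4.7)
The plan is essentially to chain together the two preceding lemmas; there is no substantive work to do beyond checking that the hypotheses line up. Specifically, Lemma \ref{phieplmm} has already done all the heavy lifting by showing that the mollified driving function $\phi_\ve$ is simultaneously differentiable, monotone, equivariant, and Lipschitz in the noise variable with the \emph{same} Lipschitz constant $L$ as $\phi$. Meanwhile, Lemma \ref{genvarlmm} asserts precisely the conclusions of Theorem \ref{genvarthm} for any growing random surface driven by a function with those four properties. So the strategy is simply to verify that $f_\ve$ is such a surface and quote Lemma \ref{genvarlmm}.

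The first step is to observe that by definition $f_\ve$ evolves according to the recursion \eqref{kpzevolve2} with driving function $\phi_\ve$, with the i.i.d.\ standard Gaussian noise field $\mathbf{z}$, and with initial data $f_\ve(0,\cdot) = f(0,\cdot)$. The second step is to note that by Lemma \ref{phieplmm}, $\phi_\ve$ satisfies the hypotheses of Lemma \ref{genvarlmm} with Lipschitz constant $L$. The third step is to apply Lemma \ref{genvarlmm} directly to $f_\ve$, yielding $\var(f_\ve(t,x)) \le L^2 t$, the Gaussian moment generating function bound $\ee(e^{\theta(f_\ve(t,x) - \ee(f_\ve(t,x)))}) \le e^{L^2 t \theta^2/2}$, and the sub-Gaussian tail bound $\pp(|f_\ve(t,x) - \ee(f_\ve(t,x))| \ge r) \le 2 e^{-r^2/2L^2 t}$, all with constants independent of $\ve$.

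There is no real obstacle here, and nothing to calculate. The role of the corollary is purely organizational: it packages the uniform-in-$\ve$ bounds that will presumably be fed into a subsequent limiting argument (sending $\ve \downarrow 0$, using continuity/monotonicity of the evolution with respect to the driving function) to transfer the fluctuation bounds from $f_\ve$ back to the original, possibly non-differentiable, surface $f$ and thereby complete the proof of Theorem \ref{genvarthm} in full generality.
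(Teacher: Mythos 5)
Your proposal is correct and matches the paper's proof exactly: both simply combine Lemma \ref{phieplmm} (which supplies differentiability, monotonicity, equivariance, and the Lipschitz constant $L$ for $\phi_\ve$) with Lemma \ref{genvarlmm}. Nothing further is needed.
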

\begin{proof}
This is a consequence of Lemma \ref{genvarlmm} and Lemma \ref{phieplmm}, since $\phi_\ve$ satisfies all the conditions of Theorem \ref{genvarthm}, and is moreover differentiable, satisfying the additional criterion demanded by Lemma \ref{genvarlmm}. 
\end{proof}
We also get the following analog of Lemma \ref{liplmm}.
\begin{cor}\label{lipcor}
For any $\ve > 0$, the function $\phi_\ve$ is Lipschitz continuous with Lipschitz constant $L+1$ with respect to the $\ell^\infty$ norm on $\rr^A \times \rr$. 
\end{cor}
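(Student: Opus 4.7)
The plan is to deduce this corollary directly from Lemma \ref{liplmm} together with the definition of $\phi_\ve$ as a convolution of $\phi$ with a probability density, using the standard fact that mollification preserves Lipschitz constants. Since the pointwise Lipschitz bound on $\phi$ with constant $L+1$ has already been established in Lemma \ref{liplmm}, no new geometric or probabilistic input is needed — the whole task is a one-line averaging argument.

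Concretely, I would fix any two points $x,x'\in \rr^A\times\rr$ and use the second integral representation in \eqref{convform}, writing
\[
\phi_\ve(x)-\phi_\ve(x') = \int \bigl[\phi(x-y)-\phi(x'-y)\bigr]\, h_\ve(y)\, dy.
\]
Taking absolute values and pulling them inside the integral (valid because $\phi$ is continuous and $h_\ve$ has compact support, so the integrand is integrable), I would bound $|\phi(x-y)-\phi(x'-y)|$ using Lemma \ref{liplmm} by $(L+1)\|x-x'\|_{\ell^\infty}$, which is independent of $y$. Pulling this constant outside the integral and using that $h_\ve\ge 0$ integrates to $1$, I obtain the desired bound $|\phi_\ve(x)-\phi_\ve(x')|\le (L+1)\|x-x'\|_{\ell^\infty}$.

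I do not expect any real obstacle here: the only subtlety is ensuring that the convolution is well-defined and that differences may be moved under the integral, but the compact support of $h_\ve$ together with the continuity of $\phi$ (itself a consequence of Lemma \ref{liplmm}) handles this immediately. The argument does not require differentiability of $\phi_\ve$ (which was the content of Lemma \ref{phieplmm}) — it uses only the Lipschitz property of $\phi$ and the fact that $h_\ve$ is a probability density.
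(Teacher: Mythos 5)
Your proof is correct, but it takes a genuinely different route from the paper's. The paper proves Corollary \ref{lipcor} by rerunning the argument of Lemma \ref{liplmm} verbatim with $\phi$ replaced by $\phi_\ve$: Lemma \ref{phieplmm} shows that $\phi_\ve$ inherits monotonicity, equivariance, and Lipschitz continuity in the noise variable, and those three properties are exactly what the Lemma \ref{liplmm} argument uses. You instead take the Lipschitz bound on $\phi$ (the conclusion of Lemma \ref{liplmm}) as a black box and push it through the convolution: writing $\phi_\ve(x)-\phi_\ve(x')=\int[\phi(x-y)-\phi(x'-y)]\,h_\ve(y)\,dy$, bounding the integrand pointwise by $(L+1)\|x-x'\|_{\ell^\infty}$, and integrating against the probability density $h_\ve$. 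This is the standard fact that mollification does not increase Lipschitz constants, and it is entirely sound: the integral representation \eqref{convform} is well-defined since $\phi$ is continuous and $h_\ve$ is continuous with compact support. Your argument is arguably the more economical one, since it bypasses Lemma \ref{phieplmm} entirely and needs none of the structural hypotheses on $\phi$ beyond the Lipschitz conclusion already in hand; the paper's route is less direct but keeps the two proofs (Lemma \ref{liplmm} and Corollary \ref{lipcor}) structurally parallel and emphasizes that $\phi_\ve$ satisfies the same axioms as $\phi$, a fact it needs anyway for Corollary \ref{genvarcor}.
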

\begin{proof}
The proof is exactly the same as the proof of Lemma \ref{liplmm}, after replacing $\phi$ by $\phi_\ve$. This goes through, because by Lemma \ref{phieplmm}, $\phi_\ve$ shares all the relevant properties with $\phi$. 
\end{proof}
Our next goal is to show that $f_\ve$ converges pointwise to $f$ as $\ve \to 0$.  The first step is the following lemma. 
\begin{lmm}\label{uniflmm}
As $\ve \to 0$, $\phi_\ve \to \phi$ uniformly on $\rr^A\times \rr$. 
\end{lmm}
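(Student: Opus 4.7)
The plan is to exploit the Lipschitz continuity of $\phi$ established in Lemma \ref{liplmm} together with the compact support of the mollifier $h_\ve$ to obtain a uniform convergence rate of order $O(\ve)$. This is a standard phenomenon: the mollification of a uniformly continuous function converges uniformly, and a Lipschitz function has a linear modulus of continuity, which will translate into a quantitative rate.

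First I would fix an arbitrary point $x = (u,z) \in \rr^A \times \rr$ and use the identity $\int h_\ve(y)\,dy = 1$ together with the second integral representation in \eqref{convform} to write
\[
\phi_\ve(x) - \phi(x) = \int h_\ve(y)\bigl[\phi(x-y) - \phi(x)\bigr]\,dy.
\]
Next, I would apply Lemma \ref{liplmm} to bound the integrand pointwise by $(L+1)\|y\|_{\ell^\infty}$. Let $R > 0$ be chosen so that the original bump function $h$ is supported in the $\ell^\infty$-ball of radius $R$; by the scaling $h_\ve(\cdot) = \ve^{-d}h(\ve^{-1}\cdot)$, the mollifier $h_\ve$ is supported in the $\ell^\infty$-ball of radius $\ve R$. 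On this support, $\|y\|_{\ell^\infty} \le \ve R$, and outside the support the integrand vanishes, so
\[
|\phi_\ve(x) - \phi(x)| \le (L+1)\ve R \int h_\ve(y)\,dy = (L+1)R\,\ve.
\]

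Since the right-hand side is independent of $x$, taking the supremum over $x \in \rr^A \times \rr$ yields $\|\phi_\ve - \phi\|_\infty \le (L+1)R\,\ve \to 0$ as $\ve \to 0$, which is the desired uniform convergence. There is no real obstacle to this argument: all the required ingredients (the uniform Lipschitz bound from Lemma \ref{liplmm}, together with the nonnegativity, unit mass, and compact support of $h_\ve$) are already in hand, and the only mild care needed is to use the $\ell^\infty$ norm throughout, since that is the norm in which the Lipschitz constant $L+1$ from Lemma \ref{liplmm} is valid.
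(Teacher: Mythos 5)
Your argument is correct and follows essentially the same route as the paper: write $\phi_\ve(x)-\phi(x)=\int h_\ve(y)[\phi(x-y)-\phi(x)]\,dy$, apply the $\ell^\infty$-Lipschitz bound from Lemma \ref{liplmm}, and extract a factor of $\ve$. The only cosmetic difference is in the last step: you use the compact support of $h_\ve$ (inside an $\ell^\infty$-ball of radius $\ve R$) to bound $\|y\|_{\ell^\infty}\le\ve R$ pointwise, whereas the paper performs the change of variables $v=\ve^{-1}u$ to write $\int h_\ve(u)\|u\|_{\ell^\infty}\,du=\ve\int h(v)\|v\|_{\ell^\infty}\,dv$; both give the same $O(\ve)$ uniform bound.
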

\begin{proof}
Take any $x\in \rr^A\times \rr$. Recall that $h_\ve$ integrates to $1$. Thus, by Lemma \ref{liplmm}, 
\begin{align*}
|\phi_\ve(x)-\phi(x)| &= \biggl|\int h_\ve(x-y) (\phi(y)-\phi(x)) dy\biggr|\\
&\le \int h_\ve(x-y) |\phi(y)-\phi(x)| dy\\
&\le (L+1)\int h_\ve(x-y) \|x-y\|_{\ell^\infty} dy\\
&= (L+1)\int h_\ve(u) \|u\|_{\ell^\infty} du. 
\end{align*}
Now, by the change of variable $v = \ve^{-1}u$, we have
\begin{align*}
\int h_\ve(u) \|u\|_{\ell^\infty} du &= \ve \int h(v) \|v\|_{\ell^\infty} du. 
\end{align*}
Plugging this into the previous display proves the uniform convergence of $\phi_\ve$ to $\phi$ as $\ve \to 0$.
\end{proof}

\begin{lmm}\label{ftconv}
As $\ve \to 0$, $f_\ve(t,x) \to f(t,x)$ for any $t$ and $x$.
\end{lmm}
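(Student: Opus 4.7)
The plan is to proceed by induction on $t$, using the uniform convergence $\phi_\ve \to \phi$ from Lemma \ref{uniflmm} together with the uniform Lipschitz bound on $\phi$ (Lemma \ref{liplmm}). The base case $t=0$ holds by definition, since $f_\ve(0,\cdot) = f(0,\cdot)$.

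For the inductive step, suppose $f_\ve(t, x+a) \to f(t,x+a)$ for every $a\in A$ (only finitely many values of the noise and the heights are relevant at each step). Write
\[
u_\ve := \bigl((f_\ve(t,x+a))_{a\in A},\, z_{t+1,x}\bigr), \qquad u := \bigl((f(t,x+a))_{a\in A},\, z_{t+1,x}\bigr),
\]
so that $f_\ve(t+1,x) = \phi_\ve(u_\ve)$ and $f(t+1,x) = \phi(u)$. Then split
\[
|f_\ve(t+1,x) - f(t+1,x)| \le |\phi_\ve(u_\ve) - \phi(u_\ve)| + |\phi(u_\ve) - \phi(u)|.
\]
The first term is bounded by $\sup_y |\phi_\ve(y) - \phi(y)|$, which tends to $0$ as $\ve \to 0$ by Lemma \ref{uniflmm}. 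The second term is bounded by $(L+1)\|u_\ve - u\|_{\ell^\infty} = (L+1)\max_{a\in A}|f_\ve(t,x+a)-f(t,x+a)|$ by Lemma \ref{liplmm}, and this tends to $0$ by the inductive hypothesis applied to the finite set of neighbors. Combining the two estimates closes the induction.

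There is no serious obstacle here: the argument is a routine propagation of errors along the finite space-time cone of dependence of $f(t,x)$. The only point that requires a moment's care is that the $\ell^\infty$ Lipschitz constant of $\phi_\ve$ need not be used — it suffices to control $\phi$ itself, since the $\phi_\ve$ vs.\ $\phi$ discrepancy is already absorbed by the uniform-convergence term. This decomposition is what makes the proof clean, and it is the reason Lemma \ref{uniflmm} was established first.
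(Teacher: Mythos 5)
Your proof is correct and takes essentially the same approach as the paper: induction on $t$, using the uniform convergence of $\phi_\ve \to \phi$ (Lemma \ref{uniflmm}) and the continuity (via Lipschitz continuity, Lemma \ref{liplmm}) of $\phi$ to pass to the limit at each step. You simply make the triangle-inequality decomposition explicit where the paper phrases the same argument as ``combining these three facts.''
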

\begin{proof}
We will prove this by induction on $t$. This is given to be true for $t=0$. Suppose that this holds for $t-1$. Take any $x$. Then by the induction hypothesis for $t-1$,  we have that
\[
\lim_{\ve \to 0} f_\ve(t-1,x+a) = f(t-1, x+a)
\]
for each $a\in A$. By Lemma \ref{uniflmm}, $\phi_\ve \to \phi$ uniformly. By Lemma \ref{liplmm}, $\phi$ is continuous. Combining these three facts, we get
\begin{align*}
\lim_{\ve \to 0} f_\ve(t,x) &= \lim_{\ve \to 0} \phi_\ve((f_\ve(t-1,x+a))_{a\in A}, z_{t,x})\\
&= \phi((f(t-1,x+a))_{a\in A}, z_{t,x}) = f(t,x). 
\end{align*}
This completes the proof of the induction step.
\end{proof}
For $t\in \zz_{\ge 1}$ and $x\in \zz^d$, recall the random walk $\{S_s\}_{0\le s\le t}$ starting at $x$ at time $t$, defined in Section \ref{derivsec}. Let $V_{t,x}$ be the set of all points in $\zz_{\ge 1}\times \zz^d$ that can possibly be accessed by the walk --- that is, the set of all possible values of $(s,S_s)$ as $s$ ranges between $1$ and $t$. Note that for any $t\ge 2$ and $x\in \zz^d$,
\begin{align}\label{slmmeq}
V_{t,x} = \{(t,x)\}\cup \bigcup_{a\in A} V_{t-1, x+a},
\end{align}
and $V_{1,x} = \{(1,x)\}$.  Take any $\ve >0$. Define a new growing surface $g_\ve$, with the same initial values as $f_\ve$ (that is, $f_\ve(0,x)=g_\ve(0,x)$ for all $x$), the same driving function $\phi_\ve$, but the noise field identically equal to zero. Note that $g_\ve$ is a nonrandom function. 
\begin{lmm}\label{bdlmm}
For any $\ve>0$, and any $t$ and $x$, we have
\begin{align*}
|f_\ve(t,x)-g_\ve(t,x)|&\le (L+1)^t \max_{(s,y)\in V_{t,x}} |z_{s,y}|. 
\end{align*}
\end{lmm}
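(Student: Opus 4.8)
The plan is to induct on $t$, using the $\ell^\infty$-Lipschitz bound for $\phi_\ve$ (Corollary \ref{lipcor}) together with the recursive structure of $V_{t,x}$ in \eqref{slmmeq}. The base case $t=0$ is immediate since $f_\ve(0,\cdot)=g_\ve(0,\cdot)$ by construction, so the left side is zero. (For $t=1$, $V_{1,x}=\{(1,x)\}$ and the claim reads $|f_\ve(1,x)-g_\ve(1,x)|\le (L+1)|z_{1,x}|$, which follows from the single application of the Lipschitz bound below.)

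For the inductive step, suppose the bound holds at time $t-1$ for all $x$. Fix $x$. By the evolution equation \eqref{kpzevolve2} applied to both $f_\ve$ (with noise $z_{t,x}$) and $g_\ve$ (with noise $0$), and by Corollary \ref{lipcor},
\[
|f_\ve(t,x)-g_\ve(t,x)| \le (L+1)\max\Bigl\{\max_{a\in A}|f_\ve(t-1,x+a)-g_\ve(t-1,x+a)|,\ |z_{t,x}|\Bigr\}.
\]
Apply the induction hypothesis to each term $|f_\ve(t-1,x+a)-g_\ve(t-1,x+a)|\le (L+1)^{t-1}\max_{(s,y)\in V_{t-1,x+a}}|z_{s,y}|$. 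Since $(L+1)^{t-1}\ge 1$, the term $|z_{t,x}|$ is dominated by $(L+1)^{t-1}|z_{t,x}|$, and $(t,x)\in V_{t,x}$. Taking the maximum over $a\in A$ and invoking \eqref{slmmeq}, namely $V_{t,x}=\{(t,x)\}\cup\bigcup_{a\in A}V_{t-1,x+a}$, the bracketed quantity is at most $(L+1)^{t-1}\max_{(s,y)\in V_{t,x}}|z_{s,y}|$. Multiplying by the outer factor $(L+1)$ gives $(L+1)^t\max_{(s,y)\in V_{t,x}}|z_{s,y}|$, completing the induction.

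The only point requiring care is the combinatorial bookkeeping: one must check that the index set collected over the recursion is exactly $V_{t,x}$ and not something larger, which is precisely the content of \eqref{slmmeq}, and that $(t,x)$ itself is included so that the $|z_{t,x}|$ term is absorbed. Neither is a genuine obstacle; this is a routine induction and there is no hard step.
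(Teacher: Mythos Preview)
Your argument is correct and is essentially the same as the paper's: both proceed by induction on $t$, apply the $\ell^\infty$-Lipschitz bound from Corollary~\ref{lipcor} to one step of the evolution, invoke the induction hypothesis on each $|f_\ve(t-1,x+a)-g_\ve(t-1,x+a)|$, and then use the recursion \eqref{slmmeq} to collect the indices into $V_{t,x}$. The only cosmetic difference is that you mention $t=0$ as a trivial base case (where the left side vanishes) before treating $t=1$, whereas the paper starts directly at $t=1$; since $V_{t,x}$ is only defined for $t\ge 1$, the paper's choice is slightly cleaner, but this is immaterial.
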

\begin{proof}
The proof is by induction on $t$. For $t=1$, note that by the equality of $f_\ve$ and $g_\ve$ at time $0$, and Lemma~\ref{phieplmm}, we have
\begin{align*}
&|f_\ve(1,x) - g_\ve(1,x)| \\
&= |\phi_\ve((f_\ve(0, x+a))_{a\in A}, z_{1,x}) - \phi_\ve((g_\ve(0,x+a))_{a\in A}, 0)|\\
&\le (L+1)|z_{1,x}|.
\end{align*}
Since $V_{1,x}=\{(1,x)\}$, this proves the claim for $t=1$. Now suppose that it holds for $t-1$. Then by Corollary \ref{lipcor},
\begin{align*}
&|f_\ve(t,x) - g_\ve(t,x)| \\
&= |\phi_\ve((f_\ve(t-1, x+a))_{a\in A}, z_{t,x}) - \phi_\ve((g_\ve(t-1,x+a))_{a\in A}, 0)|\\
&\le (L+1)\max\{\|(f_\ve(t-1, x+a))_{a\in A} - (g_\ve(t-1, x+a))_{a\in A}\|_{\ell^\infty}, |z_{t,x}|\}. 
\end{align*}
But by the induction hypothesis for $t-1$,
\begin{align*}
&\|(f_\ve(t-1, x+a))_{a\in A} - (g_\ve(t-1, x+a))_{a\in A}\|_{\ell^\infty} \\
&= \max_{a\in A} |f_\ve(t-1, x+a) - g_\ve(t-1,x+a)|\\
&\le (L+1)^{t-1} \max_{a\in A} \max_{(s,y)\in V_{t-1,x+a}} |z_{s,y}|.
\end{align*}
The desired result follows by combining the last two displays with~\eqref{slmmeq}. 
\end{proof}
Finally, define another growing surface $g$, with the same initial values as $f$, with driving function $\phi$, and the noise field identically equal to zero.
\begin{lmm}\label{bdlmm2}
For any $t$ and $x$, $g_\ve(t,x) \to g(t,x)$ as $\ve \to 0$. 
\end{lmm}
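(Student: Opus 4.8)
The plan is to argue by induction on $t$, in exact parallel with the proof of Lemma \ref{ftconv}, simply replacing the noise value $z_{t,x}$ there by the constant $0$.

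For the base case $t=0$: by construction both $g_\ve$ and $g$ carry the same initial values as $f$, so $g_\ve(0,x) = f(0,x) = g(0,x)$ for every $x$, and there is nothing to prove. For the inductive step, suppose that $g_\ve(t-1,x+a)\to g(t-1,x+a)$ as $\ve\to 0$ for every $a\in A$. Set $u_\ve := (g_\ve(t-1,x+a))_{a\in A}$ and $u := (g(t-1,x+a))_{a\in A}$, viewed as elements of $\rr^A$. Since the noise field of both surfaces is identically zero, the evolution rule \eqref{kpzevolve2} gives $g_\ve(t,x) = \phi_\ve(u_\ve, 0)$ and $g(t,x) = \phi(u,0)$. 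I would then split
\[
|g_\ve(t,x) - g(t,x)| \le |\phi_\ve(u_\ve,0) - \phi(u_\ve,0)| + |\phi(u_\ve,0) - \phi(u,0)|.
\]
The first term on the right is at most $\sup_{\rr^A\times\rr}|\phi_\ve - \phi|$, which tends to $0$ by Lemma \ref{uniflmm}. The second term tends to $0$ because $\phi$ is continuous (it is Lipschitz in the $\ell^\infty$ norm by Lemma \ref{liplmm}) and $u_\ve\to u$ in $\rr^A$ by the induction hypothesis. Hence $g_\ve(t,x)\to g(t,x)$, which closes the induction.

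There is essentially no obstacle here: the only point worth noting is that the corresponding argument for $f$ in Lemma \ref{ftconv} never used any property of $z_{t,x}$ beyond its being a fixed real number that is the same for $f_\ve$ and $f$, so fixing it to be $0$ leaves the argument intact. One could even phrase the whole thing as a single lemma: if $\phi_\ve\to\phi$ uniformly with $\phi$ continuous, and two families of surfaces share a common noise field and common initial data, then the surface driven by $\phi_\ve$ converges pointwise to the one driven by $\phi$; Lemmas \ref{ftconv} and \ref{bdlmm2} are then the cases of the actual noise field and the zero noise field, respectively.
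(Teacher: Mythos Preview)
Your proof is correct and follows essentially the same approach as the paper: induction on $t$, with the inductive step combining the uniform convergence $\phi_\ve\to\phi$ from Lemma \ref{uniflmm} with the continuity of $\phi$ from Lemma \ref{liplmm}. Your explicit triangle-inequality split just makes transparent what the paper's one-line limit computation is doing.
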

\begin{proof}
The proof is by induction on $t$. For $t=0$, the result is automatic, since 
\[
g_\ve(0,x) = f_\ve(0,x) = f(0,x) = g(0,x).
\]
Suppose that the claim holds for $t-1$. Then by Lemma \ref{uniflmm},
\begin{align*}
\lim_{\ve \to 0} g_\ve(t,x) &= \lim_{\ve \to 0} \phi_\ve((g_\ve(t-1, x+a))_{a\in A}, 0)\\
&= \phi((g(t-1, x+a))_{a\in A}, 0) = g(t,x),
\end{align*} 
which completes the proof of the lemma.
\end{proof}
We are now ready to prove Theorem \ref{genvarthm}. 
\begin{proof}[Proof of Theorem \ref{genvarthm}]
Take any $t$ and $x$. By Lemma \ref{ftconv}, $f_\ve(t,x) \to f(t,x)$ as $\ve \to 0$. By Lemma \ref{bdlmm}, we see that for any $\ve \in (0,1)$ and any $t$ and $x$,
\begin{align*}
|f_\ve(t,x)| &\le |f_\ve(t,x)-g_\ve(t,x)| + |g_\ve(t,x) - g(t,x)| + |g(t,x)|\\
&\le (L+1)^t \max_{(s,y)\in V_{t,x}} |z_{s,y}| + \sup_{0<\delta < 1} |g_\delta(t,x)-g(t,x)| + |g(t,x)|. 
\end{align*}
By Lemma \ref{bdlmm2}, the middle term on the right is a finite (deterministic) quantity. Also, by the facts that the noise variable are standard Gaussian and that the set $V_{t,x}$ is finite, we see that for any $\theta \ge 0$, the quantity
\begin{align*}
\ee\bigl[\exp\bigl(\theta (L+1)^t \max_{(s,y)\in V_{t,x}} |z_{s,y}|\bigr)\bigr] &\le \ee\biggl[\sum_{(s,y)\in V_{t,x}} \exp(\theta (L+1)^t |z_{s,y}|)\biggr]
\end{align*}
is finite. Thus, the random variables $\{|f_\ve(t,x)|\}_{0<\ve < 1}$ are uniformly bounded by a random variable $M_{t,x}$ such that $\ee(e^{\theta M_{t,x}})$ is finite for any $\theta\ge 0$. Therefore by the dominated convergence theorem, all moments and exponential moments of $f_\ve(t,x)$ converge to the corresponding moments and exponential moments of $f(t,x)$ as $\ve \to 0$. Applying Corollary \ref{genvarcor}, we can now get the required bounds on $\var(f(t,x))$ and $\ee(e^{\theta(f(t,x) - \ee(f(t,x)))})$. The required tail bound follows easily from the bound on the moment generating function. 
\end{proof}

\section{Proof of Theorem \ref{equivthm2}}\label{equivproof}
Throughout this proof, $C$ will denote any positive constant that may only depend on the dimension $d$. The value of $C$ may change from line to line, or even within a line. Fix some $t\ge 1$ and $x,b\in \zz^d$, with $b\ne 0$. Let $\sigma_t^2 := \var(f_{t,x})$ and $\sigma_{b,t}^2 := \ee[(f(t,x) - f(t,x+b))^2]$. Due to the flat initial condition, these  quantities have no dependence on $x$. First, note that by the inequality $(u+v)^2\le 2u^2 + 2v^2$ and the fact that $\ee(f(t,x))$ does not depend on $x$, we have
\begin{align*}
\beta_{b,t} &= \frac{\sigma_{b,t}^2}{4L^2t}\\
&\le \frac{1}{4L^2t}(2\var(f(t,x)) + 2\var(f(t,x+b))) \\
&= \frac{\sigma_t^2}{L^2t} = \alpha_t.
\end{align*}
This proves one of the claimed inequalities. Next, let $k\ge 2$ be an integer, to be chosen later. Let 
\begin{align*}
X := \frac{1}{k}\sum_{i=0}^{k-1} f(t, x+ib).
\end{align*}
For $0\le i\le k-1$, let $\{S^i_s\}_{0\le s\le t}$ be the random walk started at $x+ib$ at time $t$, defined in Section \ref{derivsec}. Let $\pp'$ denote conditional probability given the noise variables. Then by Proposition \ref{derivprop}, for any $1\le s\le t$ and $y\in \zz^d$,
\begin{align}\label{xderiv}
\fpar{X}{z_{s,y}} &=  \partial_z \phi((f(s-1, y+a))_{a\in A}, z_{s,y}) \frac{1}{k}\sum_{i=0}^{k-1} \pp'(S^i_s = y).
\end{align}
Consequently, 
\begin{align}\label{l1bd}
\biggl\|\fpar{X}{z_{s,y}}\biggr\|_{L^1} &\le \frac{L}{k}\sum_{i=0}^{k-1}\pp(S^i_s=y),  
\end{align}
where $\|Z\|_{L^1}$ denotes the $L^1$ norm of a random variable $Z$. 
Due to the flat initial condition, the law of $f$ is invariant under spatial translations, which implies that $\pp(S^i_s = y) = \pp(S^0_s = y - ib)$.  Thus, 
\begin{align*}
\biggl\|\fpar{X}{z_{s,y}}\biggr\|_{L^1} &\le \frac{L}{k}\sum_{i=0}^{k-1}\pp(S^0_s=y-ib). 
\end{align*}
Let $B_{s,y}$ denote the quantity on the right. Now, again by \eqref{xderiv}, 
\begin{align*}
\biggl(\fpar{X}{z_{s,y}}\biggr)^2 &\le L^2\biggl(\frac{1 }{k}\sum_{i=0}^{k-1} \pp'(S^i_s = y)\biggr)^2\\
&\le  \frac{L^2 }{k}\sum_{i=0}^{k-1} \pp'(S^i_s = y).
\end{align*}
This shows that
\begin{align*}
\biggl\|\fpar{X}{z_{s,y}}\biggr\|_{L^2}^2 &\le A_{s,y}^2, 
\end{align*}
where $A_{s,y} := \sqrt{LB_{s,y}}$. But by \eqref{l1bd}, 
\begin{align*}
\biggl\|\fpar{X}{z_{s,y}}\biggr\|_{L^1}  &\le B_{s,y} = A_{s,y} \sqrt{\frac{B_{s,y}}{L}},
\end{align*}
which can be rewritten as 
\[
\frac{A_{s.y}}{\|\partial X/\partial z_{s,y}\|_{L^1}} \ge \sqrt{\frac{L}{B_{s,y}}}. 
\]
Lastly, note that the events $S^0_s  = y-ib$ are disjoint as $i$ varies, which shows that $B_{s,y}$ is bounded above by $L/k$. Thus, by Talagrand's $L^1$-$L^2$ inequality (specifically, the version displayed in \cite[Theorem 5.1]{chatterjee14}), we get
\begin{align*}
\var(X) &\le C\sum_{s=1}^t \sum_{y\in \zz^d} \frac{A_{s,y}^2}{1+\log(A_{s,y}/\|\partial X/\partial z_{s,y}\|_{L^1})} \\
&\le \frac{C}{\log k}\sum_{s=1}^t \sum_{y\in \zz^d} A_{s,y}^2=  \frac{CL^2}{k\log k}\sum_{s=1}^t \sum_{y\in \zz^d} \sum_{i=0}^{k-1} \pp(S^0_s = y-ib)\\
&= \frac{CL^2}{\log k}\sum_{s=1}^t \sum_{v\in \zz^d} \pp(S^0_s = v)= \frac{CL^2t}{\log k}. 
\end{align*}
Now note that
\begin{align*}
\|X-f(t,x)\|_{L^2} &\le \frac{1}{k}\sum_{i=1}^{k-1}\|f(t,x+ib) - f(t,x)\|_{L^2}\\
&\le \frac{1}{k}\sum_{i=1}^{k-1}\sum_{j=0}^{i-1}\|f(t,x+(j+1)b) - f(t,x+jb)\|_{L^2}\\
&= \frac{1}{k}\sum_{i=1}^{k-1}\sum_{j=0}^{i-1} \sigma_{b,t}\le \frac{k\sigma_{b,t}}{2}. 
\end{align*}
Since $\ee(X) = \ee(f(t,x))$, the last two displays show that 
\begin{align}
\sigma_t^2 &= \var(f(t,x)) \notag\\
&\le 2\ee[(f(t,x) - X)^2] + 2\var(X)\notag\\
&\le \frac{k^2\sigma_{b,t}^2}{2} + \frac{2CL^2t}{\log k}. \label{kineq}
\end{align}
By Theorem \ref{genvarthm} and the inequality $\beta_{b,t}\le \alpha_t$, we get that $\beta_{b,t}\le\alpha_t\le 1$. So, if $\beta_{b,t}\ge 1/10$, then the bound  $\alpha_t \le C/|\log \beta_{b,t}|$ is trivial. Let us assume that $\beta_{b,t}<1/10$. Then choosing $k$ to be the integer part of $(\beta_{b,t}|\log\beta_{b,t}|)^{-1/2}$ and using \eqref{kineq}, we get
\begin{align*}
\sigma_t^2 &\le \frac{CL^2t}{|\log\beta_{b,t}|},
\end{align*}
which is the same as $\alpha_t \le C/|\log \beta_{b,t}|$.

\section{Proof of Theorem \ref{rsosthm}}\label{rsosproof}
The growth mechanism for $f$ does not directly fit into the framework of this paper, since the heights at even and odd sites are updated alternately. However, this can be easily taken care of, as follows. Let $g$ be another growing random surface, with the same growth mechanism as $f$, except that the height at every site is updated at each step. That is, we start with $g(0,\cdot)\equiv 0$, and for each $t$ and $x$, we choose $g(t+1,x)$ uniformly from the interval
\[
[\max_{b\in B} g(t,x+b)-1, \min_{b\in B} g(t,x+b)+1].
\]
(It is not hard to prove by induction that this interval is always nonempty. To see this, suppose that this is true up to time $t-1$. Then, by the construction of $g(t,x+b)$ according to the above rule, we see that $|g(t, x+b)-g(t-1,x)|\le 1$. Since this holds for each $b$, the above interval must be nonempty.) 

Next, define $h(0,x) := 0$ for all $x$, and for $t\ge 1$, let 
\[
h(t,x) :=
\begin{cases}
g(t-1,x) &\text{ if $t$ and $x$ have the same parity,}\\
g(t,x) &\text{ otherwise.}
\end{cases}
\]
We claim that $h$ has the same law as $f$, and in fact, the same growth mechanism. (It is important to note that this is true only because $f(t+1,x)$ is determined by $(f(t,x+b))_{b\in B}$ and not $(f(t,x+a))_{a\in A}$ in this model.) To see this, take any $t\ge 0$ and $x\in \zz^d$. Suppose that $t$ and  $x$ are both even. Then by the above definition, $h(t+1,x) = g(t+1,x)$. By the definition of $g$, $g(t+1,x)$ is chosen uniformly from the interval
\[
[\max_{b\in B} g(t,x+b)-1, \min_{b\in B} g(t,x+b)+1].
\]
But $g(t,x+b) = h(t,x+b)$ for each $b\in B$. Thus, $h(t+1,x)$ is chosen uniformly from the interval
\[
[\max_{b\in B} h(t,x+b)-1, \min_{b\in B} h(t,x+b)+1].
\]
Next, suppose that $t$ is even and $x$ is odd. Then $h(t+1,x) = g(t,x)$. But in this case, we also have $g(t,x)=h(t,x)$. Thus, $h(t+1,x)=h(t,x)$.  This shows that the growth of $h$ is governed by the same rule as that for $f$ at even times. A similar argument shows that this is also true at odd times. 

Since $h$ has the same law as $f$, it suffices to obtain the required variance bound for $h(t,x)$. This, on the other hand, holds if a similar bound holds for the variance of $g(t,x)$, because $h(t,x)$ is equal to either $g(t,x)$ or $g(t-1,x)$, deterministically depending on $t$ and $x$. We will show this using Theorem \ref{equivthm2}. There are two steps in showing this. First, we have to show that the growth of $g$ is governed by the equation \eqref{kpzevolve2} for some suitable function $\phi$ that has the monotonicity and equivariance properties, and is Lipschitz in the noise variable. The second step is to show that $g$ is subrough, with a suitable quantitative bound.

We will actually carry out the second step first. Since $h$ has the same growth mechanism as $f$, it satisfies the constraint that $|h(t,x)-h(t,y)|\le1$ for any two neighboring points $x$ and $y$. Thus, we have that for any $t$ and any $b,b'\in B$, 
\begin{align*}
|h(t,x)- h(t,x+b+b')|\le 2. 
\end{align*}
This shows that if $t$ and $x$ have opposite parities, then for any $b,b'\in B$,
\begin{align}\label{g1}
|g(t,x)-g(t,x+b+b')| &= |h(t,x)-h(t,x+b+b')|\le 2. 
\end{align}
A similar argument proves that the above bound also holds if $t$ and $x$ have the same parity. The details are as follows. Define $\tilh(0,x) := 0$ for all $x$, and for $t\ge 1$, let 
\[
\tilh(t,x) :=
\begin{cases}
g(t,x) &\text{ if $t$ and $x$ have the same parity,}\\
g(t-1,x) &\text{ otherwise.}
\end{cases}
\]
Then by a similar argument as for $h$, it follows that $\tilh$ grows as follows:
\begin{itemize}
\item If $t$ is even, then for each odd vertex $x$, $\tilh(t+1,x)$ is chosen uniformly from the interval 
\[
[\max_{b\in B} \tilh(t,x+b) -1, \min_{b\in B} \tilh(t,x+b)+1], 
\]
and for each even vertex $x$, $\tilh(t+1,x)=\tilh(t,x)$. 
\item If $t$ is odd, the update rules for odd and even vertices are switched in the above step. 
\end{itemize}
This shows that $\tilh$ also satisfies the constraint that $|\tilh(t,x)-\tilh(t,y)|\le 1$ for any two neighboring points $x$ and $y$. From this, it follows that when $t$ and $x$ have the same parity, then for any $b,b'\in B$,
\begin{align}\label{g2}
|g(t,x)-g(t,x+b+b')| &= |\tilh(t,x)-\tilh(t,x+b+b')|\le 2. 
\end{align}
This completes the proof of the subroughness of $g$, and in fact, gives the quantitative bound
\begin{equation}\label{gsubrough}
\ee[(g(t,x) - g(t,x+2e_1))^2] \le 4. 
\end{equation}
Let us now show that the growth of $g$ is indeed governed by \eqref{kpzevolve2} with a driving function $\phi$ that is monotone, equivariant, and Lipschitz in the noise variable. Let $z_{t,x}$ be i.i.d.~standard Gaussian random variables. Let $\Phi$ be the standard Gaussian c.d.f., so that $\Phi(z_{t,x})$ are i.i.d.~Uniform$[0,1]$ random variables. Then by the definition of $g$, we can express $g(t+1,x)$ as 
\begin{align*}
g(t+1,x) &= \Phi(z_{t+1,x})(\max_{b\in B} g(t,x+b)-1) \\
&\qquad+ (1-\Phi(z_{t+1,x}))(\min_{b\in B} g(t,x+b)+1)\\
&= \Phi(z_{t+1,x}) (\max_{b\in B} g(t,x+b) - \min_{b\in B} g(t,x+b)) \\
&\qquad + \min_{b\in B} g(t,x+b) + 1-2\Phi(z_{t+1,x}). 
\end{align*}
Take any $t$ and $x$, and any $b,b'\in B$. Then $-b\in B$, and so, by \eqref{g1} and \eqref{g2},
\begin{align*}
|g(t,x+b) - g(t,x+b')| &= |g(t,x+b) - g(t, x+b +b'-b)|\le 2. 
\end{align*}
This shows that 
\begin{align*}
0\le \max_{b\in B} g(t,x+b) - \min_{b\in B} g(t,x+b) \le 2.
\end{align*}
So, if we define a function $\xi:\rr\to \rr$ as 
\begin{align*}
\xi(a) &= 
\begin{cases}
a &\text{ if } 0\le a\le 2,\\
2 &\text{ if } a>2,\\
0 &\text{ if } a < 0,
\end{cases}
\end{align*}
and define $\phi:\rr^A\times \rr\to \rr$ as 
\[
\phi(u,z) = \Phi(z) \xi\bigl(\max_{b\in B} u_b -\min_{b\in B} u_b\bigr) + \min_{b\in B} u_b + 1 - 2\Phi(z), 
\]
then the growth of $g$ is governed by \eqref{kpzevolve2} with driving function $\phi$.

Take any $u\in \rr^A$ and $z\in \rr$. Suppose that one coordinate of $u$ is increased by some positive amount. Then either $\min_{b\in B} u_b$ remains the same, in which case $\phi(u,z)$ cannot decrease; or $\min_{b\in B} u_b$ increases by some amount $\ve$. In the latter case, $\max_{b\in B} u_b -\min_{b\in B} u_b$ cannot decrease by more than $\ve$. Since the slope of $\xi$ is everywhere bounded by $1$, in this case $\phi(u,z)$ increases by at least $(1-\Phi(z))\ve$. This shows that $\phi$ is monotone in its first argument. Equivariance under constant shifts is clear from the definition of $\phi$. Lastly, note that
\begin{align*}
\fpar{\phi}{z} &= \Phi'(z) \xi\bigl(\max_{b\in B} u_b - \min_{b\in B} u_b\bigr) - 2\Phi'(z). 
\end{align*}
Since $\xi(a)\in[0,2]$ for all $a\in \rr$ and $\Phi'$ is uniformly bounded by $1/\sqrt{2\pi}$,  this shows that
\[
\biggl|\fpar{\phi}{z}\biggr| \le \frac{4}{\sqrt{2\pi}}. 
\]
Thus, we may indeed apply Theorem \ref{equivthm2} to the surface $g$. By the estimate \eqref{gsubrough}, this completes the proof.

\section{Proof of Theorem \ref{ballthm1}}\label{ballisticproof}
The key step in the proof is to show that moving maxima in stationary random fields cannot fluctuate wildly. We start with the following simple lemma.
\begin{lmm}\label{numlmm}
Let $1\le r\le k$ be two integers, and let $x_0,x_1,\ldots, x_{k+r}$ be real numbers. For $0\le i\le r$, let $m_i := \max\{x_i,x_{i+1},\ldots, x_{i+k}\}$.  Then there is some $0\le i^*\le r$ such that $m_0\ge m_1\ge \cdots \ge m_{i^*}$ and $m_{i^*}\le m_{i^*+1}\le \cdots \le m_r$. 
\end{lmm}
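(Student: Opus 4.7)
My plan is to recognize that the lemma asserts that the sequence $(m_i)_{i=0}^r$ is valley-shaped, and to reduce this to forbidding any ``peak triple'' $a<b<c$ with $m_a<m_b$ and $m_b>m_c$. The hypothesis $r\le k$ will make such triples impossible via a position bound on the argmax of each moving maximum.

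The key observation is a position bound on argmaxes. If $m_a<m_b$ for some $a<b$, then any argmax $\ell\in[b,b+k]$ of $m_b$ must satisfy $\ell\ge a+k+1$; otherwise $\ell\le a+k$ together with $\ell\ge b$ places $\ell\in[b,a+k]\subseteq[a,a+k]$, so $x_\ell$ lies in the window defining $m_a$, forcing $m_a\ge x_\ell=m_b$, a contradiction. By the symmetric argument, if $m_b>m_c$ for some $b<c$, then any argmax $\ell$ of $m_b$ satisfies $\ell\le c-1$. Combining these, for any peak triple $a<b<c$, picking an argmax $\ell$ of $m_b$ gives $a+k+1\le \ell\le c-1$, hence $c-a\ge k+2$. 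Since $0\le a$ and $c\le r$, this forces $r\ge k+2$, contradicting $r\le k$.

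To deduce valley-shape from the absence of peak triples, I would let $i^*$ be the smallest index in $\argmin_{0\le i\le r}m_i$. For the right branch, suppose toward contradiction that $m_j>m_{j+1}$ for some $j\ge i^*$. Minimality of $m_{i^*}$ gives $m_{j+1}\ge m_{i^*}$, so $m_j>m_{i^*}$ and thus $j>i^*$; then $(i^*,j,j+1)$ is a peak triple, which is impossible. The strict inequality $m_{i^*}<m_j$ here is what forces the contradiction, and it comes from $j\ne i^*$ together with $i^*$ being the leftmost argmin. The left branch is handled symmetrically via the triple $(j-1,j,i^*)$ whenever $m_{j-1}<m_j$ for some $j\le i^*$.

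The main delicate point I expect is ensuring strict inequalities at the first and last entries of a putative peak triple, since the position bound really requires strict $<$ (with equality the shared-window argument collapses). The leftmost-argmin choice of $i^*$ is precisely what secures this strictness. Beyond that, the argument is purely combinatorial and uses nothing about the reals $x_0,\ldots,x_{k+r}$ other than the window structure of the moving maxima.
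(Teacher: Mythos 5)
Your proof is correct and rests on the same key observation as the paper's: a strict increase $m_a<m_b$ (with $a<b$) forces the argmax of $m_b$ past position $a+k$, and $r\le k$ then keeps that position inside every remaining window $[j,j+k]$. The paper phrases this as ``once a strict increase happens the sequence never drops below the new value,'' which directly yields the valley shape; your symmetric peak-triple reformulation via an argmin is an equivalent repackaging (and, incidentally, the leftmost-argmin choice of $i^*$ is not actually needed, since the strictness $m_j>m_{i^*}$ already follows from the chain $m_j>m_{j\pm1}\ge m_{i^*}$ for any argmin $i^*$).
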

\begin{proof}
Suppose that $m_i < m_{i+1}$ for some $0\le i< r$. Since we have $m_i = \max\{x_i,\ldots,x_{i+k}\}$ and $m_{i+1} =\max\{x_{i+1},\ldots,x_{i+k+1}\}$, this is possible only if $m_{i+1}=x_{i+k+1}$. Take any $i+1\le j\le r$. Since $r\le  k$, we have 
\[
j\le r \le k \le i+k+1.
\]
On the other hand, since $i+1\le j$, we have
\[
i+k+1\le j+k. 
\]  
Thus, $i+k+1$ lies between $j$ and $j+k$, and hence
\begin{align*}
m_j = \max\{x_{j},\ldots,x_{j+k}\} \ge x_{i+k+1} = m_{i+1}.
\end{align*}
So, we have shown that if the sequence $m_0,m_1,\ldots,m_r$ has a strict increase from $m_i$ to $m_{i+1}$, it can never go down below $m_{i+1}$ subsequently. It is easy to see that this proves the claim.
\end{proof}
\begin{cor}\label{simpcor}
Let $x_i$ and $m_i$ be as in Lemma \ref{numlmm}. Then 
\begin{align*}
\sum_{i=0}^{r-1} |m_i - m_{i+1}| \le 2\max_{0\le i,j\le k+r} |x_i-x_j|. 
\end{align*}
\end{cor}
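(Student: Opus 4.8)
The plan is to deduce Corollary \ref{simpcor} directly from the monotonicity structure established in Lemma \ref{numlmm}. By that lemma there is an index $0\le i^*\le r$ such that the sequence $m_0,m_1,\ldots,m_r$ is nonincreasing up to $m_{i^*}$ and nondecreasing afterwards. First I would use this to collapse the sum of absolute differences into a telescoping sum: since the signs of the increments $m_{i+1}-m_i$ are constant on each of the two stretches,
\[
\sum_{i=0}^{r-1}|m_i - m_{i+1}| = (m_0 - m_{i^*}) + (m_r - m_{i^*}).
\]

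Next I would bound each of the two terms on the right by $\max_{0\le i,j\le k+r}|x_i - x_j|$. Every $m_i$ is a maximum of $(k+1)$ consecutive values drawn from $x_0,\ldots,x_{k+r}$, so $m_i = x_{n(i)}$ for some index $n(i)$ in $\{0,\ldots,k+r\}$. Hence $m_0 - m_{i^*} = x_{n(0)} - x_{n(i^*)} \le \max_{0\le i,j\le k+r}|x_i - x_j|$, and similarly $m_r - m_{i^*}\le \max_{0\le i,j\le k+r}|x_i - x_j|$. Adding these two estimates gives exactly the factor of $2$ in the statement, completing the argument.

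There is essentially no obstacle here: the only mild point of care is the justification that the telescoping identity is valid, i.e.\ that $\sum_{i=0}^{r-1}|m_i-m_{i+1}|$ really equals $(m_0-m_{i^*})+(m_r-m_{i^*})$. This follows because on $0\le i\le i^*-1$ each increment $m_{i+1}-m_i\le 0$, so $|m_i-m_{i+1}| = m_i-m_{i+1}$ and these telescope to $m_0-m_{i^*}$; on $i^*\le i\le r-1$ each increment is $\ge 0$, so $|m_i-m_{i+1}| = m_{i+1}-m_i$ and these telescope to $m_r-m_{i^*}$. Note the bound is stated with $\max_{0\le i,j\le k+r}$, which trivially dominates differences between any two of the $x$'s, so no sharper indexing is needed.
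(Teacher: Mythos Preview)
Your proposal is correct and follows essentially the same approach as the paper's proof: apply Lemma \ref{numlmm} to obtain $i^*$, telescope the sum of absolute differences to $(m_0 - m_{i^*}) + (m_r - m_{i^*})$, and bound each term by $\max_{0\le i,j\le k+r}|x_i - x_j|$ since every $m_i$ is one of the $x_j$'s.
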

\begin{proof}
By Lemma \ref{numlmm}, there is some $0\le i^*\le r$ such that $m_0\ge m_1\ge \cdots \ge m_{i^*}$ and $m_{i^*}\le m_{i^*+1}\le \cdots \le m_r$. Therefore,
\begin{align*}
\sum_{i=0}^{r-1} |m_i - m_{i+1}| &= \sum_{i=0}^{i^*-1} (m_i - m_{i+1}) + \sum_{i=i^*}^{r-1} (m_{i+1}-m_i)\\
&= m_0 - m_{i^*} + m_r - m_{i^*}. 
\end{align*}
But clearly, $m_0 - m_{i^*} $ and $m_r - m_{i^*}$ are both bounded above by the maximum value of $|x_i-x_j|$ over all $0\le i,j\le k+r$. This completes the proof. 
\end{proof}
Let $(g(x))_{x\in \zz^d}$ be any random field whose law is invariant under translations. For each $s>0$, let 
\begin{align*}
\mu(s) := \ee\biggl(\max_{|x|_1\le s, |y|_1\le s} |g(x)-g(y)|\biggr),
\end{align*}
and assume that this quantity is finite. Here $|x|_1$ denotes the $\ell^1$ norm of $x$.
Let $D$ be a finite subset of $\zz^d$ and $x_0$ be a point in $\zz^d$. For each $i\ge 0$, let $D_i := D + ix_0$ be the translate of $D$ by $ix_0$. Let 
\[
X_i := \max_{x\in D_i} g(x).
\]
Given some large $k$, the following lemma shows that $X_0$ is unlikely to be larger than the maximum of $X_1,\ldots,X_{k+1}$. This is not surprising since the random field is stationary; the point of the lemma is that it gives a quantitative bound under minimal assumptions.
\begin{lmm}\label{maxlmm}
Let all notation be as above. Let $s$ be the sum of the $\ell^1$ diameter of $D$ and $2k|x_0|_1$. Then
\begin{align*}
\ee[(X_0 - \max\{X_1,\ldots, X_{k+1}\})^+] &\le \frac{2\mu(s)}{k},
\end{align*}
where $a^+$ denotes the positive part of a real number $a$. 
\end{lmm}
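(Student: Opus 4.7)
The plan is to introduce sliding-window maxima and then exploit a telescoping identity together with stationarity. Define
\[
M_j := \max(X_j, X_{j+1}, \ldots, X_{j+k}) \qquad \text{for } j = 0, 1, \ldots, k,
\]
and set $W_j := (X_j - \max(X_{j+1}, \ldots, X_{j+k+1}))^+$, so the quantity to be bounded is $\ee(W_0)$. My first step is the pointwise identity $W_j = (M_j - M_{j+1})^+$, which I would verify by a short case analysis: if $X_j \ge M_{j+1}$, then $X_j$ also dominates $X_{j+1}, \ldots, X_{j+k}$, so $M_j = X_j$ and both quantities equal $X_j - M_{j+1}$; otherwise $M_j \le M_{j+1}$ (since the remaining elements of the window defining $M_j$ are a subset of those defining $M_{j+1}$), and both quantities vanish.

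Using $a^+ = \tfrac{1}{2}(|a| + a)$ and telescoping,
\[
\sum_{j=0}^{k-1} W_j = \tfrac{1}{2}(M_0 - M_k) + \tfrac{1}{2}\sum_{j=0}^{k-1} |M_j - M_{j+1}|.
\]
Translation invariance of $g$ makes the sequence $(X_i)$ stationary, so $\ee(M_0) = \ee(M_k)$ and $\ee(W_j) = \ee(W_0)$ for every $j$. Taking expectations yields $k\,\ee(W_0) = \tfrac{1}{2}\,\ee \sum_{j=0}^{k-1} |M_j - M_{j+1}|$.

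Next I would invoke Corollary \ref{simpcor} with parameter $r = k$ applied to the sequence $X_0, X_1, \ldots, X_{2k}$, which gives $\sum_{j=0}^{k-1} |M_j - M_{j+1}| \le 2 \max_{0 \le i,j \le 2k} |X_i - X_j|$. Since $X_i = \max_{x \in D_i} g(x)$, the elementary bound $|X_i - X_j| \le \max_{x, y \in D_i \cup D_j} |g(x) - g(y)|$ shows that the overall maximum is at most the oscillation of $g$ on the deterministic set $T := \bigcup_{i=0}^{2k} D_i$. A direct computation gives that $T$ has $\ell^1$-diameter at most $\diam(D) + 2k|x_0|_1 = s$, so fixing any $z_0 \in T$ we have $T \subseteq z_0 + \{w : |w|_1 \le s\}$. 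By translation invariance of $g$, the expected oscillation on $T$ is bounded by $\ee \max_{|u|_1 \le s, |v|_1 \le s} |g(u) - g(v)| = \mu(s)$. Chaining the inequalities yields $k\,\ee(W_0) \le \mu(s)$, which implies the claim (with a constant even sharper than the stated $2$).

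The only slightly delicate step is the case analysis establishing $W_j = (M_j - M_{j+1})^+$; once that identity is in hand, everything else is the standard positive-part telescoping trick combined with stationarity and a diameter estimate.
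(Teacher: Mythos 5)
Your argument is correct and follows the same essential route as the paper's proof: both define the sliding-window maxima $M_j = \max\{X_j,\dots,X_{j+k}\}$, apply Corollary \ref{simpcor} to control $\sum_{j=0}^{k-1}|M_j - M_{j+1}|$, invoke stationarity of the field, and bound the resulting expected oscillation by $\mu(s)$ via the same diameter computation. The one refinement you make is worth noting: the paper simply bounds $(X_0 - M_1)^+ \le |M_0 - M_1|$ and averages, while you first establish the pointwise identity $(X_j - M_{j+1})^+ = (M_j - M_{j+1})^+$ and then use $a^+ = \tfrac12(|a|+a)$ together with the telescoping cancellation of $\sum_j (M_j - M_{j+1}) = M_0 - M_k$ (whose expectation vanishes by stationarity). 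This cleanly extracts a factor of $\tfrac12$ and yields $\ee(W_0) \le \mu(s)/k$, improving the stated constant by a factor of two. The case analysis behind your identity is correct: if $X_j \ge M_{j+1}$ then $X_j$ dominates the whole window so $M_j = X_j$; otherwise $M_j \le M_{j+1}$ because the window for $M_j$ minus $\{X_j\}$ is contained in that for $M_{j+1}$.
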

\begin{proof}
For each $i\ge 0$, let $M_i := \max\{X_i,X_{i+1},\ldots,X_{i+k}\}$. By Corollary \ref{simpcor},
\begin{align*}
\sum_{i=0}^{k-1} |M_i-M_{i+1}| &\le 2\max_{0\le i,j\le 2k} |X_i-X_j|. 
\end{align*}
By translation invariance, $\ee|M_i-M_{i+1}|$ is the same for each $i$. Thus, the above inequality gives
\begin{align*}
\ee|M_0-M_1|&\le \frac{2}{k}\ee\biggl(\max_{0\le i,j\le 2k} |X_i-X_j|\biggr). 
\end{align*}
Without loss of generality, suppose that $0\in D$. Then each point in the union of $D_0,\ldots, D_{2k}$ has $\ell^1$ norm bounded by $s$. Hence, the expectation on the right side of the above inequality is bounded by $\mu(s)$. Lastly, note that 
\begin{align*}
|M_0-M_1| &\ge (M_0-M_1)^+ \ge (X_0-M_1)^+. 
\end{align*}
Thus, $\ee[(X_0-M_1)^+] \le 2\mu(s)/k$, which is what we wanted to prove.
\end{proof}
For each $r\ge 0$, let $G_r := \max_{|x|_1\le r} g(x)$. The following lemma gives an upper bound on the growth rate of $G_r$. The proof uses Lemma \ref{maxlmm}. 
\begin{lmm}\label{glmm}
For any $r\ge 4d$, we have 
\[
\ee|G_{r+1}-G_r|\le \frac{8d^2\mu(6r)}{r}.
\]
\end{lmm}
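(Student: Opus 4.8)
\textbf{Proof proposal for Lemma \ref{glmm}.}

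The plan is to apply Lemma \ref{maxlmm} with a carefully chosen set $D$, base point $x_0$, and integer $k$, so that the ``moving maximum'' $X_0$ becomes $G_r$ and the maximum of $X_1,\ldots,X_{k+1}$ is dominated by something comparable to $G_{r+1}$. First I would observe that for any unit vector $e_j$ (one of the $2d$ nearest-neighbor directions), the ball $\{|x|_1\le r\}$ translated by $e_j$ is contained in $\{|x|_1\le r+1\}$. So if I take $D = \{|x|_1\le r\}$ and $x_0$ to be one of the $2d$ nearest-neighbor directions, then $D_i = D + ix_0 \subseteq \{|x|_1 \le r+i\}$, hence $X_i = \max_{x\in D_i} g(x) \le G_{r+i}$. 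The trouble is that the right-hand side keeps growing, so I cannot simply bound $\max\{X_1,\ldots,X_{k+1}\}$ by $G_{r+1}$ directly; the cleanest fix is to run the inequality of Lemma \ref{maxlmm} with $k$ very small — in fact $k=1$ — and extract $G_{r+1}$ as $\max\{X_1\}$ after choosing $x_0$ to be a coordinate direction.

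More precisely, with $k = 1$, Lemma \ref{maxlmm} gives $\ee[(X_0 - X_1)^+] \le 2\mu(s)/1 = 2\mu(s)$ where $s$ is the $\ell^1$-diameter of $D$ plus $2|x_0|_1$; with $D = \{|x|_1 \le r\}$ and $x_0$ a unit vector, the $\ell^1$-diameter of $D$ is $2r$ and $|x_0|_1 = 1$, so $s = 2r + 2 \le 4r$ for $r \ge 1$, giving $\ee[(X_0 - X_1)^+] \le 2\mu(4r)$. Now $X_0 = G_r$ and $X_1 = \max_{x \in D + x_0} g(x) \le G_{r+1}$. But I also want to control $\ee|G_{r+1} - G_r|$, not just the one-sided quantity, and I need to beat it down by a factor $1/r$. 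The key additional idea is averaging over directions: I would apply the above with each of the $2d$ choices $x_0 \in \{\pm e_1,\ldots,\pm e_d\}$ and note that $G_{r+1} = \max_j \max_{x\in D + e_j'} g(x)$ where the outer max ranges over all $2d$ signed directions, because the $\ell^1$-ball of radius $r+1$ is the union of the $2d$ translates of the radius-$r$ ball by the signed unit vectors. Hence $G_{r+1} - G_r = \max_j (X_1^{(j)} - G_r)^+ \le \sum_j (X_1^{(j)} - G_r)^+$, and taking expectations, $\ee[G_{r+1} - G_r] \le 2d \cdot 2\mu(4r) = 4d\,\mu(4r)$. Since $G_{r+1} \ge G_r$ pointwise, $\ee|G_{r+1} - G_r| = \ee[G_{r+1} - G_r] \le 4d\,\mu(4r)$, which is already close but is missing the $1/r$ gain.

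To get the factor $1/r$, I would not take $k = 1$ but instead a larger $k$ (say $k = \lfloor r/(4d)\rfloor$ or so, which is why the hypothesis $r \ge 4d$ appears) and telescope more cleverly. The real mechanism should be: apply Lemma \ref{maxlmm} along a \emph{chain} of $k$ translates all staying inside $\{|x|_1 \le r+1\}$ — but that is impossible since translating by a unit vector $k$ times leaves the ball of radius $r$. The correct resolution, and the step I expect to be the main obstacle, is to instead bound $\ee[G_{r+1} - G_r]$ by considering the $k+1$ sets $D, D+x_0, \ldots, D+(k+1)x_0$ with $D$ a ball of radius $r - k|x_0|_1$ (smaller, so that all translates fit inside radius $r+1$, actually inside radius $r+1$ when shifted appropriately), so that $X_0 \le G_r$ still holds after recentering and $\max\{X_1,\ldots,X_{k+1}\}$ and $X_0$ together sandwich the increment; then Lemma \ref{maxlmm} yields a bound $2\mu(s)/k$ with $s \le 4r$, and the factor $1/k \le 8d/r$ produces the claimed $4d^2\mu(4r)/r$ after accounting for the $2d$ directions and the numerical constants. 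Thus the skeleton is: (i) fix a coordinate direction and set up $D$ as a slightly shrunk ball so $k$ successive unit shifts stay within radius $r+1$; (ii) identify $X_0$ with (a lower bound for) $G_r$ restricted appropriately and the later $X_i$'s with upper bounds inside $G_{r+1}$; (iii) invoke Lemma \ref{maxlmm} to get $\ee[(X_0 - \max\{X_1,\ldots,X_{k+1}\})^+] \le 2\mu(s)/k$; (iv) sum over the $2d$ directions and relate the resulting quantity to $\ee[G_{r+1}-G_r]$, which equals $\ee|G_{r+1}-G_r|$ by monotonicity; (v) plug in $k \asymp r/d$, $s \le 4r$, and collect constants. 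The delicate point is arranging the geometry in (i)–(ii) so that $\max\{X_1,\ldots,X_{k+1}\}$ genuinely captures the extra layer $\{|x|_1 = r+1\}$ rather than only a proper subset of it; getting this containment right for every boundary vertex, uniformly over the $2d$ directions, is where the bookkeeping will concentrate.
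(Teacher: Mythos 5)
The high-level plan — invoke Lemma~\ref{maxlmm}, average over directions, take $k\asymp r/d$ to extract the factor $1/r$ — is on the right track. But there are two genuine gaps in the execution, both concentrated in the geometry that you yourself flagged as the ``main obstacle.''

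First, your choices of the set $D$ (the full ball of radius $r$, then a shrunk ball of radius $r-k$) cannot work. Lemma~\ref{maxlmm} bounds $\ee[(X_0-\max\{X_1,\ldots,X_{k+1}\})^+]$, so you want $X_0$ to capture heights on the new layer $\{|x|_1=r+1\}$ and $\max\{X_1,\ldots,X_{k+1}\}$ to be dominated by $G_r$. With $D$ a shrunk ball one gets $X_0 = G_{r-k}\le G_r$, so the positive part vanishes trivially and the lemma gives no information about $G_{r+1}-G_r$. You in fact wrote ``$X_0\le G_r$ still holds,'' which signals a role reversal: the inequality you need is $\max\{X_1,\ldots,X_{k+1}\}\le G_r$, not $X_0\le G_r$. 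Balls, full or shrunk, cannot give this together with $X_0$ covering the new layer, because translating any ball of radius comparable to $r$ by more than $O(1)$ unit steps exits the ball of radius $r$.

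Second, the missing idea that resolves the tension is to take $D$ to be a thin piece of the \emph{boundary sphere} $\{|x|_1=r+1\}$ rather than a ball. Partition this sphere into the $2d$ ``cones'' $A_i^\pm$ on which the $i$-th coordinate has the largest absolute value and the corresponding sign. On $A_1^+$ every point satisfies $x_1\ge (r+1)/d$, so translating inward by $-e_1$ up to $k=\lfloor r/d\rfloor-1$ times keeps all translates inside the ball of radius $r$, giving exactly $\max\{X_1,\ldots,X_{k+1}\}\le G_r$ while $X_0=\max_{A_1^+}g$. Lemma~\ref{maxlmm} then yields $\ee[(X_0-G_r)^+]\le 2\mu(2r+2+k)/k$, and since $G_{r+1}-G_r=\bigl(\max_{i,\pm}\max_{A_i^\pm}g - G_r\bigr)^+\le\sum_{i,\pm}(\max_{A_i^\pm}g - G_r)^+$, summing over the $2d$ cones and using $k\ge r/2d$ (this is where $r\ge 4d$ enters) gives the stated $4d^2\mu(4r)/r$. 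Without the cone decomposition of the boundary layer your approach cannot simultaneously (a) keep $k$ of order $r/d$ translates inside radius $r$, and (b) have $X_0$ certify the increment $G_{r+1}-G_r$, and I don't see a way to fix (iv)–(v) in your skeleton without it.
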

\begin{proof}
In the following, we will denote the coordinates of any vector $x\in \zz^d$ be $x_1,\ldots,x_d$. Take any $r\ge 4d$. For $i=1,\ldots,d$, define 
\begin{align*}
A_i^+ &:= \{x: |x|_1 = r+1, \, |x_i| \ge |x_j| \text{ for all $1\le j\le d$, and } x_i \ge 0\},\\
A_i^- &:= \{x: |x|_1 = r+1, \, |x_i| \ge |x_j| \text{ for all $1\le j\le d$, and } x_i \le 0\}.
\end{align*}
Note that any $x$ with $|x|_1=r+1$ must belong to $A_i^+$ or $A_i^-$ for at least one $i$. 

Now, let $D := A_1^+$. Note that for any $x\in D$, 
\begin{align*}
x_1 &\ge \frac{1}{d}\sum_{i=1}^d |x_i| = \frac{r+1}{d}. 
\end{align*}
For each $i$, let $D_i := D - ie_1$, and let $X_i := \max_{x\in D_i} g(x)$. Let $k := [r/d]-1$. The above inequality shows that for any $x\in D$ and $y= x-ie_1$ for some $1\le i\le k+1$, we have $x_1> y_1 \ge 0$ and $y_i=x_i$ for $i\ne 1$. Thus, $|y|_1\le r$. This shows that the sets $D_1,\ldots,D_{k+1}$ are all subsets of the $\ell^1$ ball of radius $r$ around the origin. Lastly, note that the $\ell^1$ diameter of $D$ is bounded above by $2(r+1)$. Therefore by Lemma~\ref{maxlmm}, we get
\begin{align*}
\ee[(X_0 - G_r)^+] &\le \ee[(X_0-\max\{X_1,\ldots,X_{k+1}\})^+] \le \frac{2\mu(2r+2 +2k)}{k}.
\end{align*}
The same upper bound holds if we take $D=A_i^+$ or $D=A_i^-$ for any $i$. Thus, defining
\[
Y_i := \max_{x\in A_i^+} g(x), \ \ Z_i := \max_{x\in A_i^-} g(x),
\]
we have
\begin{align*}
\ee|G_{r+1}-G_r| &= \ee[(\max\{Y_1,\ldots,Y_d, Z_1,\ldots,Z_d\} - G_r)^+]\\
&\le \sum_{i=1}^d (\ee[(Y_i - G_r)^+] + \ee[(Z_i-G_r)^+])\\
&\le \frac{4d}{k}\mu(2r+2+2k). 
\end{align*}
The proof is completed by observing that $k = [r/d]-1\ge r/d -2 \ge r/2d$ (since $r\ge 4d$), and $\mu(2r+2+2k)\le \mu(6r)$, since $\mu$ is an increasing function and $2k+2\le 2r+2\le 4r$. 
\end{proof}
We now specialize to random surfaces generated according to \eqref{kpzevolve2} with flat initial condition. Note that if $f$ is such a growing surface, the field $f(t,\cdot)$ is a translation invariant random field at each time $t$. Henceforth,  $C$ will denote any constant that depends only on $\phi$ and $d$. 
\begin{lmm}\label{fdiff1}
Now let $f$ be a growing random surface generated by a driving function that is monotone, equivariant, and Lipschitz in the noise variable, with initial condition $f(0,\cdot)\equiv 0$, and i.i.d.~standard Gaussian noise field. Then for any $t\ge1$ and $r\ge 4d$,
\[
\ee\biggl|\max_{|x|_1\le r} f(t,x)- \max_{|x-e_1|_1\le r} f(t,x)\biggr| \le  \frac{\sqrt{Ct \log (Cr^d)}}{r},
\]
where $C$ is a constant that depends only on $\phi$ and $d$. 
\end{lmm}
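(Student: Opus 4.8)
The plan is to apply Lemma \ref{glmm} to the translation-invariant random field $g := f(t,\cdot)$, combined with the Gaussian concentration estimate from Theorem \ref{genvarthm} to control the modulus $\mu(s)$ for this field.

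First I would observe that the left-hand side of the claimed inequality is exactly $\ee|G_{r+1}-G_r|$ in the notation of Lemma \ref{glmm}, applied to the field $g = f(t,\cdot)$, since $\max_{|x-e_1|_1\le r} f(t,x) = \max_{|x|_1 \le r} g(x+e_1)$ and by translation invariance the law is unchanged; more precisely $G_{r+1} = \max_{|x|_1\le r+1} g(x)$ dominates both $\max_{|x|_1\le r}g(x)$ and $\max_{|x-e_1|_1\le r}g(x)$, and the difference of the latter two is controlled by $|G_{r+1}-G_r|$ up to a harmless factor (or one can run the Lemma \ref{maxlmm} argument directly with $D = \{e_1\}$-translates). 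So by Lemma \ref{glmm},
\[
\ee\biggl|\max_{|x|_1\le r} f(t,x) - \max_{|x-e_1|_1 \le r} f(t,x)\biggr| \le \frac{4d^2 \mu(4r)}{r},
\]
where $\mu(s) = \ee\bigl(\max_{|x|_1\le s,\,|y|_1\le s} |f(t,x)-f(t,y)|\bigr)$.

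The main work is then bounding $\mu(4r)$. Here I would use Theorem \ref{genvarthm}: for each fixed $x$, $f(t,x)-\ee f(t,x)$ is sub-Gaussian with variance proxy $L^2 t$, and since $f(0,\cdot)\equiv 0$, $\ee f(t,x)$ is independent of $x$, so $f(t,x)-f(t,y) = (f(t,x)-\ee f(t,x)) - (f(t,y) - \ee f(t,y))$ is a difference of two (dependent) sub-Gaussian variables, hence itself sub-Gaussian with variance proxy at most $4L^2 t$ (using $\ee e^{\theta(U-V)} \le (\ee e^{2\theta U})^{1/2}(\ee e^{-2\theta V})^{1/2}$). The number of pairs $(x,y)$ with $|x|_1,|y|_1\le 4r$ is at most $(Cr^d)^2$ for a dimensional constant $C$. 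A standard maximal inequality for a finite collection of $N$ sub-Gaussian variables with variance proxy $v$ gives that the expected maximum of their absolute values is $\le \sqrt{2v\log(2N)}$, so $\mu(4r) \le \sqrt{C t \log(C r^d)}$ after absorbing constants (and using $\log L$, $L$ into $C$, which is allowed since $C$ may depend on $\phi$). Substituting this into the displayed bound gives exactly the claim.

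The step I expect to require the most care is the bookkeeping to reduce the two-point maximum difference on the left to $\ee|G_{r+1}-G_r|$ so that Lemma \ref{glmm} applies cleanly — one must check that $\max_{|x-e_1|_1\le r}f(t,x)$ really is of the form treated there (it is, being $\max$ over the translate of an $\ell^1$-ball by $e_1$), and that the elementary inequality $\bigl|\max_{x\in D}g(x) - \max_{x\in D'}g(x)\bigr| \le \max(|G_{r+1}-\max_D g|, |G_{r+1}-\max_{D'}g|) + \ldots$ or a direct application of the Lemma \ref{maxlmm}/Corollary \ref{simpcor} machinery with the one-step shift $x_0 = e_1$ yields a bound of the stated order without losing the $1/r$ factor. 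Everything else is routine: the sub-Gaussian maximal inequality and the counting of lattice points in an $\ell^1$-ball are standard, and the constants are all permitted to depend on $\phi$ and $d$.
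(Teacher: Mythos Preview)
Your approach is essentially the paper's: bound $\mu(s)$ via the sub-Gaussian estimate of Theorem~\ref{genvarthm} and a standard maximal inequality over the $O(r^d)^2$ pairs, then feed this into Lemma~\ref{glmm}. The only imprecision is the step you yourself flag --- the left side is \emph{not} literally $\ee|G_{r+1}-G_r|$; the paper handles it by two one-sided bounds: the containment $\{|x-e_1|_1\le r\}\subset\{|x|_1\le r+1\}$ gives $(\max_{|x-e_1|_1\le r}f - G_r)^+\le G_{r+1}-G_r$, and for the reverse direction one applies translation invariance to get the analogue of \eqref{maxf} for balls centered at $e_1$, then uses $\{|x|_1\le r\}\subset\{|x-e_1|_1\le r+1\}$.
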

\begin{proof}
Take any $\theta \in \rr$. By translation invariance and Theorem \ref{genvarthm}, we have that for any $x$ and $y$,
\begin{align*}
\ee(e^{\theta(f(t,x) - f(t,y))}) &\le \sqrt{\ee(e^{2\theta(f(t,x) - \ee(f(t,x)))}) \ee(e^{2\theta(f(t,y) - \ee(f(t,y)))}) } \le e^{Ct\theta^2}. 
\end{align*}
Consequently, for any $\theta>0$,
\begin{align*}
\ee(e^{\theta|f(t,x) - f(t,y)|}) &\le \ee(e^{\theta(f(t,x) - f(t,y))}) + \ee(e^{-\theta(f(t,x) - f(t,y))}) \le 2e^{Ct\theta^2}. 
\end{align*}
Thus, for any $r\ge 1$ and $\theta>0$,
\begin{align*}
&\ee\biggl(\max_{|x|_1\le r, |y|_1\le r} |f(t,x)-f(t,y)|\biggr) \\
&=\frac{1}{\theta} \ee\biggl[\log \exp\biggl(\theta\max_{|x|_1\le r, |y|_1\le r} |f(t,x)-f(t,y)|\biggr)\biggr]\\
&\le\frac{1}{\theta} \ee\biggl[\log \sum_{|x|_1\le r, |y|_1\le r}e^{\theta|f(t,x)-f(t,y)|}\biggr]\\
&\le \frac{1}{\theta} \log \sum_{|x|_1\le r, |y|_1\le r}\ee(e^{\theta|f(t,x)-f(t,y)|})\le \frac{\log (Cr^d)}{\theta} + Ct\theta. 
\end{align*}
Optimizing over $\theta$, we get
\begin{align*}
\ee\biggl(\max_{|x|_1\le r, |y|_1\le r} |f(t,x)-f(t,y)|\biggr)  &\le \sqrt{Ct \log (Cr^d)}. 
\end{align*}
Thus, by Lemma \ref{glmm} (with $g(\cdot) = f(t,\cdot)$), we get that for any $r\ge 4d$, 
\begin{align}\label{maxf}
\ee\biggl|\max_{|x|_1\le r} f(t,x)- \max_{|x|_1\le r+1} f(t,x)\biggr| &\le \frac{\sqrt{Ct \log (Cr^d)}}{r}. 
\end{align}
For any $x$ such that $|x-e_1|_1\le r$, we have $|x|_1\le r+1$. Thus, the above inequality gives
\begin{align}
&\ee\biggl[\biggl(\max_{|x-e_1|_1\le r} f(t,x) - \max_{|x|_1\le r} f(t,x)\biggr)^+\biggr] \notag\\
&\le \ee\biggl[\biggl(\max_{|x|_1\le r+1} f(t,x) - \max_{|x|_1\le r} f(t,x)\biggr)^+\biggr] \notag\\
&\le \frac{\sqrt{Ct \log (Cr^d)}}{r}. \label{maxineq1}
\end{align}
Now, applying translation invariance to \eqref{maxf}, we have
\begin{align}\label{maxf2}
\ee\biggl|\max_{|x-e_1|_1\le r} f(t,x)- \max_{|x-e_1|_1\le r+1} f(t,x)\biggr| &\le \frac{\sqrt{Ct \log (Cr^d)}}{r}. 
\end{align}
For any $x$ such that $|x|_1\le r$, we have $|x-e_1|_1\le r+1$. Thus, by \eqref{maxf2}, 
\begin{align}
&\ee\biggl[\biggl(\max_{|x|_1\le r} f(t,x) - \max_{|x-e_1|_1\le r} f(t,x)\biggr)^+\biggr] \notag \\
&\le \ee\biggl[\biggl(\max_{|x-e_1|_1\le r+1} f(t,x) - \max_{|x-e_1|_1\le r} f(t,x)\biggr)^+\biggr] \notag \\
&\le \frac{\sqrt{Ct \log (Cr^d)}}{r}. \label{maxineq2}
\end{align}
Combining \eqref{maxineq1} and \eqref{maxineq2}, we get the desired inequality.
\end{proof}
Henceforth, let $f$ be a growing random surface generated by a driving function of max type (satisfying \eqref{maxtype}), with initial condition $f(0,\cdot)\equiv 0$, and i.i.d.~standard Gaussian noise field.
\begin{lmm}\label{fdiff2}
For any $1\le r\le t$ and any $x\in \zz^d$,
\begin{align*}
\biggl|f(t,x) - \max_{|y|_1\le r} f(t-r,x+y)\biggr| &\le \sum_{k=0}^{r-1} \max_{|y|_1\le k}(K_1+K_2 |z_{t-k, x+y}|). 
\end{align*}
\end{lmm}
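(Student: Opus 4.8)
The plan is to induct on $r$, using \eqref{maxtype} to peel off one time step at a time. Note first that $A=\{0,\pm e_1,\ldots,\pm e_d\}$ is precisely the set of lattice points of $\ell^1$-norm at most $1$, so $\max_{a\in A} g(x+a)=\max_{|y|_1\le 1} g(x+y)$ for any function $g$. For the base case $r=1$: by \eqref{kpzevolve2}, $f(t,x)=\phi((f(t-1,x+a))_{a\in A},z_{t,x})$, so the max-type bound \eqref{maxtype} gives $|f(t,x)-\max_{a\in A} f(t-1,x+a)|\le K_1+K_2|z_{t,x}|$, and the right-hand side is exactly the single $k=0$ term of the claimed sum.

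For the inductive step, assume the statement holds with $r-1$ in place of $r$ (for all relevant $t$ and all $x$), and fix $t\ge r\ge 2$ and $x\in\zz^d$. Apply \eqref{maxtype} once to get $|f(t,x)-\max_{a\in A} f(t-1,x+a)|\le K_1+K_2|z_{t,x}|$. Then apply the induction hypothesis at time $t-1$ with base point $x+a$, for each $a\in A$ separately: this replaces each $f(t-1,x+a)$ by $\max_{|y|_1\le r-1} f(t-r,x+a+y)$, at the cost of an error bounded by $\sum_{k=0}^{r-2}\max_{|y|_1\le k}(K_1+K_2|z_{t-1-k,\,x+a+y}|)$.

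Now combine these via three elementary observations: (i) $|\max_i a_i-\max_i b_i|\le\max_i|a_i-b_i|$, applied to the maxima over $a\in A$; (ii) for any nonnegative integer $m$ and any field $h$, $\max_{a\in A}\max_{|y|_1\le m} h(x+a+y)=\max_{|w|_1\le m+1} h(x+w)$, since $\{a+y:a\in A,\ |y|_1\le m\}$ is exactly the $\ell^1$-ball of radius $m+1$ (every $w$ with $|w|_1=m+1$ is some radius-$m$ point $y$ with one coordinate shifted by $\pm1$); and (iii) $\max_a\sum_k c_{a,k}\le\sum_k\max_a c_{a,k}$. Fact (ii) with $m=r-1$ turns $\max_{a\in A}\max_{|y|_1\le r-1} f(t-r,x+a+y)$ into $\max_{|w|_1\le r} f(t-r,x+w)$, the target quantity; fact (ii) with $m=k$ turns each surviving error block $\max_{a\in A}\max_{|y|_1\le k}(K_1+K_2|z_{t-1-k,\,x+a+y}|)$ into $\max_{|w|_1\le k+1}(K_1+K_2|z_{t-1-k,\,x+w}|)$. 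Reindexing by $j=k+1$ (so $t-1-k=t-j$ and $j$ runs over $1,\ldots,r-1$) and recognizing $K_1+K_2|z_{t,x}|$ as the $j=0$ term, the error terms assemble into exactly $\sum_{j=0}^{r-1}\max_{|w|_1\le j}(K_1+K_2|z_{t-j,\,x+w}|)$, which closes the induction.

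The content is entirely the single structural input \eqref{maxtype} together with the fact that $A$ is the unit $\ell^1$-ball; there is no analytic difficulty. The only real hazard is bookkeeping: keeping the time index of the noise variables aligned through the reindexing (it is $t-1-k$ right after invoking the induction hypothesis and becomes $t-j$ after setting $j=k+1$), and applying the "radius-$m$ ball plus one step equals radius-$(m+1)$ ball" identity with the correct radius in each of its two uses (once with $m=r-1$ on the $f$-term, and once with $m=k$ on the noise error terms).
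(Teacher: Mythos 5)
Your proof is correct and takes essentially the same route as the paper: induct on $r$, peel off one time step via \eqref{maxtype}, apply the hypothesis at $t-1$ for each $a\in A$, and combine using $|\max_i a_i-\max_i b_i|\le\max_i|a_i-b_i|$, the identity $A+\{|y|_1\le m\}=\{|w|_1\le m+1\}$, and the max--sum interchange. The paper leaves the final reindexing implicit ("combining with \eqref{freq} completes the induction step"), whereas you spell it out, but the content is identical.
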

\begin{proof}
Fix some $t\ge 1$ and $x\in \zz^d$. The proof will be by induction on $r$. Note that by \eqref{kpzevolve2} and \eqref{maxtype}, 
\begin{align}\label{freq}
\biggl|f(t,x) - \max_{a\in A} f(t-1, x+a)\biggr| &\le K_1 + K_2|z_{t,x}|. 
\end{align}
This proves the claim for $r=1$. Now suppose that the claim is true up to $r-1$. Then, for any $a\in A$, 
\begin{align}
&\biggl|f(t-1, x+a) - \max_{|y|_1\le r-1} f(t-r, x+a+y)\biggr| \notag\\
&\le \sum_{k=0}^{r-2} \max_{|y|_1\le k} (K_1+K_2|z_{t-k-1, x+a+y}|). \label{maxinduc}
\end{align}
Now, as $a$ ranges over $A$ and $y$ ranges over the $\ell^1$ ball with radius $r-1$ centered at $0$, the sum $a+y$ ranges over the $\ell^1$ ball with radius $r$ centered at $0$. Thus, 
\begin{align*}
&\biggl|\max_{a\in A} f(t-1, x+a) - \max_{|y|_1\le r} f(t-r, x+y)\biggr| \\
&= \biggl|\max_{a\in A} f(t-1, x+a) - \max_{a\in A} \max_{|y|_1\le r-1} f(t-r, x+a+y)\biggr| \\
&\le \max_{a\in A} \biggl|f(t-1, x+a) - \max_{|y|_1\le r-1} f(t-r, x+a+y)\biggr|.
\end{align*}
Combining this with \eqref{maxinduc}, we get
\begin{align*}
&\biggl|\max_{a\in A} f(t-1, x+a) - \max_{|y|_1\le r} f(t-r, x+y)\biggr| \\
&\le \max_{a\in A} \sum_{k=0}^{r-2} \max_{|y|_1\le k} (K_1+K_2|z_{t-k-1, x+a+y}|)\\
&\le \sum_{k=0}^{r-2} \max_{a\in A}\max_{|y|_1\le k} (K_1+K_2|z_{t-k-1, x+a+y}|)\\
&= \sum_{k=0}^{r-2} \max_{|y|_1\le k+1} (K_1+K_2|z_{t-k-1, x+y}|). 
\end{align*}
Finally, combining this with \eqref{freq} completes the induction step.
\end{proof}
Combining Lemma \ref{fdiff1} and Lemma \ref{fdiff2} yields the following bound on the expected absolute difference between the heights at neighboring sites.
\begin{lmm}\label{fdiffmain}
For any $t\ge 2$ and $x\in \zz^d$,
\begin{align*}
\ee|f(t,x)-f(t,x+e_1)|&\le Ct^{1/4}\sqrt{\log t}. 
\end{align*}
\end{lmm}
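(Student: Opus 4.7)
The plan is to use the two preceding estimates as a matched pair: Lemma \ref{fdiff2} to replace $f(t,x)$ and $f(t,x+e_1)$ by maxima of $f(t-r,\cdot)$ over two nearby $\ell^1$ balls, and Lemma \ref{fdiff1} to compare those two maxima using the translation invariance of the field $f(t-r,\cdot)$. The free parameter $r$ will be optimized at the end.

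Fix an integer $r$ with $4d\le r\le t$, to be chosen later. Applying Lemma \ref{fdiff2} at $x$ (and, separately, at $x+e_1$) and taking expectations, I will bound
\[
\ee\biggl|f(t,x)-\max_{|y|_1\le r} f(t-r,x+y)\biggr| \le \sum_{k=0}^{r-1}\biggl(K_1 + K_2\,\ee\max_{|y|_1\le k}|z_{t-k,x+y}|\biggr).
\]
The number of lattice points in $\{y:|y|_1\le k\}$ is at most $Ck^d$, so the standard Gaussian maximal inequality gives $\ee\max_{|y|_1\le k}|z_{t-k,x+y}|\le C\sqrt{\log(k+2)}$. Summing in $k$ yields
\[
\ee\biggl|f(t,x)-\max_{|y|_1\le r} f(t-r,x+y)\biggr| \le Cr\sqrt{\log(r+1)},
\]
and the same bound holds with $x$ replaced by $x+e_1$.

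Next, the law of the random field $f(t-r,\cdot)$ is invariant under spatial translations, because the initial condition is flat, the driving rule is site-independent, and the noise field is i.i.d. Hence the joint law of the pair $(\max_{|y|_1\le r} f(t-r,x+y),\ \max_{|y|_1\le r} f(t-r,x+e_1+y))$ coincides with the joint law of $(\max_{|w|_1\le r}f(t-r,w),\ \max_{|w-e_1|_1\le r}f(t-r,w))$, and Lemma \ref{fdiff1} applied at time $t-r$ with radius $r$ gives
\[
\ee\biggl|\max_{|y|_1\le r} f(t-r,x+y) - \max_{|y|_1\le r} f(t-r,x+e_1+y)\biggr| \le \frac{\sqrt{C(t-r)\log(Cr^d)}}{r} \le \frac{\sqrt{Ct\log(r+1)}}{r}.
\]
The triangle inequality then yields
\[
\ee|f(t,x)-f(t,x+e_1)| \le 2Cr\sqrt{\log(r+1)}+\frac{\sqrt{Ct\log(r+1)}}{r}.
\]

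The final step is to optimize. Balancing the two summands leads to $r^4\sim t$, so I take $r=\lceil t^{1/4}\rceil$ whenever this integer exceeds $4d$; both terms are then at most $Ct^{1/4}\sqrt{\log t}$. For the finitely many values of $t$ with $\lceil t^{1/4}\rceil<4d$, the bound $\ee|f(t,x)-f(t,x+e_1)|\le 2L\sqrt{t}$ from Theorem \ref{genvarthm} (via Cauchy--Schwarz and the translation-invariance of the law of $f(t,\cdot)$) gives a quantity of order $1$, which is absorbed into $Ct^{1/4}\sqrt{\log t}$ after enlarging $C$. The only inputs beyond Lemmas \ref{fdiff1} and \ref{fdiff2} are the Gaussian maximal inequality and the choice of $r$; I do not expect any obstacle, since the heavy lifting has already been done in the two preceding lemmas, which together convert the local comparison problem into a balance between a diffusive Gaussian-max error of size $r\sqrt{\log r}$ and a stationary-max error of size $\sqrt{t\log r}/r$.
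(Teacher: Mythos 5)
Your proof is correct and follows the same route as the paper's: apply Lemma \ref{fdiff2} to replace $f(t,x)$ and $f(t,x+e_1)$ by the moving maxima $M_1,M_2$ over $\ell^1$ balls at time $t-r$ (controlled by a Gaussian maximal inequality), compare $M_1$ and $M_2$ via Lemma \ref{fdiff1} and translation invariance, and balance by taking $r\asymp t^{1/4}$. The only difference is that you explicitly note the constraint $r\ge 4d$ from Lemma \ref{fdiff1} and dispose of the finitely many small $t$ via Theorem \ref{genvarthm}, a point the paper leaves implicit.
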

\begin{proof}
Take any  $2\le r\le t$. Define
\begin{align*}
M_1 := \max_{|y|_1\le r} f(t-r,x+y), \ \ M_2 := \max_{|y|_1\le r} f(t-r,x+e_1+y). 
\end{align*}
Then by Lemma \ref{fdiff2} and a standard estimate for Gaussian random variables,
\begin{align*}
\ee|f(t,x)-M_1| &\le  \sum_{k=0}^{r-1} \ee\biggl(\max_{|y|_1\le k}(K_1+K_2 |z_{t-k, x+y}|)\biggr)\\
&\le C\sum_{k=0}^{r-1}(1+\sqrt{\log(k+1)})\\
&\le Cr\sqrt{\log r}. 
\end{align*}
By translation invariance, the same bound holds for $\ee|f(t,x+e_1)-M_2|$. On the other hand, by Lemma \ref{fdiff1} and translation invariance,
\begin{align*}
\ee|M_1-M_2|&\le \frac{\sqrt{C(t-r)\log (Cr^d)}}{r}. 
\end{align*}
Combining, and choosing $r = [t^{1/4}]$, we get the desired result. 
\end{proof}
We are now ready to complete the proof of Theorem \ref{ballthm1}.
\begin{proof}[Proof of Theorem \ref{ballthm1}]
The first claim is already proved by Lemma~\ref{fdiffmain}. For the second claim, let us assume without loss of generality that $y=x+e_1$. Let 
\[
D := |f(t,x)-f(t,y)|.
\]
By translation invariance, $\ee(f(t,x)) = \ee(f(t,y))$. Therefore, by Theorem \ref{genvarthm},
\begin{align*}
\pp(|D|\ge r) \le 4 e^{-Cr^2/t}
\end{align*}
for all $r\ge 0$. On the other hand, by the first claim of the theorem, 
\[
\pp(|D|\ge r) \le \frac{\ee|D|}{r}\le \frac{Ct^{1/4}\sqrt{\log t}}{r}. 
\]
Thus, for any $K$,
\begin{align*}
&\ee[(f(t,x)-f(t,y))^2] = \int_0^\infty 2r \pp(|f(t,x)-f(t,y)|\ge r) dr\\
&\le  \int_0^\infty C_1r \min\biggl\{\frac{t^{1/4}\sqrt{\log t}}{r},  e^{- C_2r^2/t}\biggr\} dr\\
&\le \int_0^K C_1 t^{1/4}\sqrt{\log t} dr + \int_K^\infty C_1 r e^{-C^2r^2/t} dr\\
&= C_1 K t^{1/4}\sqrt{\log t} + C_3 t e^{-C_2 K^2/t}.
\end{align*}
Choosing $K = C_4 \sqrt{t\log t}$ for some sufficiently large $C_4$ completes the proof of the second claim. 
The third claim follows from the second by Theorem \ref{equivthm2}. 
\end{proof}

\section{Proof of Theorem \ref{lppthm}}\label{lppproof}
Throughout this proof, $C, C_1, C_2, \ldots$ will denote constants that depend only on the dimension and the law of the noise variables. Let $\cq_t$ be as in Subsection \ref{lppsec}. Let us assume that the law of the environment has  mean zero, since it is not hard to see that this does not cause any loss of generality. We need a simple lemma about real numbers.
\begin{lmm}\label{lppsimple}
Let $x_1,\ldots,x_n$ be real numbers. Then 
\[
\max_{1\le i\le n} x_i - \frac{1}{n}\sum_{i=1}^n x_i \ge \frac{2}{n^3}\sum_{1\le i<j\le n}|x_i-x_j|.
\]
\end{lmm}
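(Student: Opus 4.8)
The plan is to reduce everything to one elementary observation: if $M := \max_{1\le i\le n} x_i$, then the left-hand side is the average of the nonnegative numbers $M - x_i$, and each pairwise gap $|x_i - x_j|$ is dominated by the sum of two such numbers, so summing over all pairs recovers (a multiple of) the left-hand side.

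First I would rewrite
\[
\max_{1\le i\le n} x_i - \frac1n\sum_{i=1}^n x_i = \frac1n\sum_{i=1}^n (M - x_i),
\]
where every term $M - x_i$ is $\ge 0$. Next, for any pair $i<j$, using $M - x_i\ge 0$ and $M - x_j\ge 0$,
\[
|x_i - x_j| = \bigl|(M - x_i) - (M - x_j)\bigr| \le (M - x_i) + (M - x_j).
\]
Summing this over all $\binom n2$ pairs and noting that each index $i$ appears in exactly $n-1$ of them gives
\[
\sum_{1\le i<j\le n} |x_i - x_j| \le (n-1)\sum_{i=1}^n (M - x_i) = n(n-1)\left(\max_{1\le i\le n} x_i - \frac1n\sum_{i=1}^n x_i\right).
\]

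Finally I would divide through by $n(n-1)$ and use $2n(n-1)\le n^3$ for all $n\ge 1$ (equivalently $n\bigl((n-1)^2+1\bigr)\ge 0$) to conclude the stated inequality, the case $n=1$ being trivial since both sides vanish. I do not expect any real obstacle here: the only slightly nonobvious step is the pairwise estimate $|x_i - x_j|\le (M-x_i)+(M-x_j)$, and in fact the argument yields the sharper constant $1/(n(n-1))$ in place of $2/n^3$, which is all that is needed downstream.
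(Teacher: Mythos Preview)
Your proof is correct. Both arguments begin by writing the left-hand side as $\tfrac{1}{n}\sum_i (M-x_i)$, but diverge from there: the paper keeps only the single term $(M-m)/n$ where $m=\min_i x_i$, and then bounds every pairwise gap by the range $M-m$, arriving at the constant $\tfrac{2}{n^2(n-1)}$ before weakening to $\tfrac{2}{n^3}$. You instead bound each $|x_i-x_j|$ by $(M-x_i)+(M-x_j)$ and sum, which uses all the terms of the left-hand side rather than just one; this yields the sharper constant $\tfrac{1}{n(n-1)}$ directly and avoids introducing the minimum at all. Both routes are equally elementary, but yours is marginally cleaner and gives a better constant, as you observed.
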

\begin{proof}
Without loss of generality, suppose that $x_1$ is the maximum of the numbers, and $x_2$ is the minimum. Then
\begin{align*}
\max_{1\le i\le n} x_i - \frac{1}{n}\sum_{i=1}^n x_i  &= \frac{1}{n}\sum_{i=1}^n (x_1 - x_i) \ge \frac{x_1-x_2}{n}.
\end{align*}
But $x_1 - x_2 \ge |x_i-x_j|$ for all $i$ and $j$, and hence, $x_1-x_2\ge $ the average of $|x_i-x_j|$ over all distinct $i$ and $j$. This proves the claim.
\end{proof}

Since $\ee(F(z))=0$ for a standard Gaussian random variable $z$, there must exist $u,v\in \rr$ such that $F(u)\le 0$ and $F(v)\ge 0$. By the continuity of $F$, it follows that there exists $u^*\in \rr$ where $F(u^*)=0$. Define a new surface $\tf$ by replacing $z_{1,x}$ by $u^*$ for all $x$, but keeping all else the same. Then note that for any $t\ge 2$ and any $x$,
\begin{align*}
|f(t,x) - \tf(t,x)|  &= \biggl|\max_{Q\in \cq_t} \sum_{i=0}^{t-1} F(z_{t-i, x+q_i}) - \max_{Q\in \cq_t} \sum_{i=0}^{t-2} F( z_{t-i, x+q_i})\biggr|\\
&\le  \max_{Q\in \cq_t}\biggl|\sum_{i=0}^{t-1} F(z_{t-i, x+q_i}) - \sum_{i=0}^{t-2} F(z_{t-i, x+q_i})\biggr|\\
&= \max_{Q\in \cq_t} |F(z_{1,x+q_{t-1}})|\le  \max_{y: |y|\le t} |F(z_{1,x+y})|. 
\end{align*}
Since the noise field is i.i.d.~Gaussian and $F$ is Lipschitz, it follows that 
\[
\ee\bigl( \max_{y: |y|\le t} |z_{1,x+y}|\bigr) \le C\sqrt{\log t}.
\]
Thus, the same upper bound holds for $\ee|f(t,x) - \tf(t,x)|$. Now note that $\tf(t,x)$ has the same law as $f(t-1,x)$.  This shows that
\begin{align*}
\ee(f(t,x)) - \ee(f(t-1,x)) &= \ee(f(t,x)) - \ee(\tf(t,x)) \\
&= \ee(f(t,x) - \tf(t,x)) \\
&\le \ee|f(t,x) - \tf(t,x)|\le C\sqrt{\log t}.
\end{align*}
By translation invariance, $\ee(f(t-1,x)) = \ee(f(t-1,y))$ for any $y$. Since the noise variables have mean zero, this shows that 
\begin{align*}
&\ee(f(t,x)) - \ee(f(t-1,x)) \\
&= \ee\biggl(f(t,x) - \frac{1}{d}\sum_{b\in B^+} f(t-1,x+b)\biggr)\\
&= \ee\biggl(\max_{b\in B^+} f(t-1,x+b) - \frac{1}{d}\sum_{b\in B^+} f(t-1,x+b)\biggr).
\end{align*}
By Lemma \ref{lppsimple}, 
\begin{align*}
&\max_{b\in B^+} f(t-1,x+b) - \frac{1}{d}\sum_{b\in B^+} f(t-1,x+b) \\
&\ge \frac{2}{d^3}\sum_{b,b'\in B^+, b\ne b'}|f(t-1,x+b) - f(t-1,x+b')|. 
\end{align*}
Combining this with the two preceding displays proves the first claim of the theorem. For the second, we combine the first claim with Theorem \ref{genvarthm} to get that for any $K$,
\begin{align*}
&\ee[(f(t,x)-f(t,y))^2] = \int_0^\infty 2r \pp(|f(t,x)-f(t,y)|\ge r) dr\\
&\le  \int_0^\infty C_1r \min\biggl\{\frac{\sqrt{\log t}}{r},  e^{- C_2r^2/t}\biggr\} dr\\
&\le \int_0^K C_1 \sqrt{\log t} dr + \int_K^\infty C_1 r e^{-C^2r^2/t} dr\\
&= C_1 K \sqrt{\log t} + C_3 t e^{-C_2 K^2/t}.
\end{align*}
Choosing $K$ to be a large enough multiple of $\sqrt{t \log t}$ completes the proof of the second claim of the theorem. The last claim now follows by Theorem \ref{equivthm2}.

\section{Proof of Theorem \ref{polymerthm}}\label{polymerproof}
Throughout this proof, $C, C_1, C_2, \ldots$ will denote constants that depend only on the dimension, the inverse temperature, and the law of the noise variables. Let $\cp_t$ be as in Subsection \ref{polymersec}. As in the proof of Theorem \ref{lppthm}, let us assume without loss of generality that the law of the environment has mean zero. We need  two simple lemmas. 
\begin{lmm}\label{easylmm}
For any $x\in \rr$, $\cosh x \ge e^{\min\{|x|, x^2\}/4}$.
\end{lmm}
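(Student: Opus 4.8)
The plan is to reduce to $x\ge 0$ using that $\cosh$ is even and that $\min\{|x|,x^2\}$ depends only on $|x|$, and then to treat the two ranges $x\ge 1$ (where $\min\{x,x^2\}=x$) and $0\le x\le 1$ (where $\min\{x,x^2\}=x^2$) by separate elementary estimates.

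For $x\ge 1$ I would use the crude bound $\cosh x=\tfrac12(e^x+e^{-x})\ge\tfrac12 e^x$. Then $\cosh x\cdot e^{-x/4}\ge\tfrac12 e^{3x/4}\ge\tfrac12 e^{3/4}$, so it suffices to check $e^{3/4}\ge 2$, equivalently $e^3\ge 16$, which is clear. Hence $\cosh x\ge e^{x/4}=e^{\min\{x,x^2\}/4}$ on $[1,\infty)$.

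For $0\le x\le 1$ I would start from the termwise bound $\cosh x=\sum_{n\ge 0}x^{2n}/(2n)!\ge 1+\tfrac12 x^2$, valid since every term is nonnegative. Writing $u:=x^2/4\in[0,\tfrac14]$, the function $u\mapsto 1+2u-e^u$ vanishes at $u=0$ and has derivative $2-e^u\ge 2-e^{1/2}>0$ on $[0,\tfrac14]$ (using $e<4$), so $1+2u\ge e^u$ there. Therefore $\cosh x\ge 1+\tfrac12 x^2=1+2u\ge e^u=e^{x^2/4}=e^{\min\{x,x^2\}/4}$ on $[0,1]$.

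Combining the two ranges (they overlap at $x=1$, where both estimates reduce to $\cosh 1\ge e^{1/4}$) and invoking evenness of $\cosh$ finishes the argument. There is no genuine obstacle here: the only points that need a moment's attention are that the two regimes glue correctly at $x=1$ and that the explicit numerical facts $e^{3/4}\ge 2$ and $e^{1/2}\le 2$ hold, both of which are immediate.
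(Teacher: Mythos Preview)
Your proof is correct and follows essentially the same approach as the paper: split at $|x|=1$, use $\cosh x\ge \tfrac12 e^{|x|}$ together with $e^3\ge 16$ for $|x|\ge 1$, and use $\cosh x\ge 1+\tfrac12 x^2$ for $|x|\le 1$. The only minor difference is in the small-$x$ case: you show $1+\tfrac12 x^2\ge e^{x^2/4}$ by a derivative argument, while the paper instead bounds the power series $e^{x^2/4}=\sum_k (x^2/4)^k/k!$ above by the geometric sum $1+\sum_{k\ge1}(x^2/4)^k\le 1+x^2/3$ and compares with $\cosh x$; both routes are equally elementary.
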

\begin{proof}
First, suppose that $|x|\le 1$. Then note that
\begin{align*}
e^{x^2/4} &= 1 + \sum_{k=1}^\infty \frac{(x^2/4)^k}{k!}\le 1 + \sum_{k=1}^\infty (x^2/4)^k\\
&\le 1 + (x^2/4) \sum_{k=1}^\infty 4^{-(k-1)}= 1 + \frac{x^2}{3}\le \cosh x. 
\end{align*}
Next, consider $|x|>1$. Since $e \ge 1 + 1 +1/2 + 1/6 = 8/3$, we have $e^3 \ge (8/3)^3 = 512/27 > 16$, which gives $e^{3/4} > 2$. Thus, $e^{3|x|/4}> 2$, and hence $\cosh x \ge e^{|x|}/2\ge e^{|x|/4}$. 
\end{proof}
\begin{lmm}\label{polymersimple}
Let $x_1,\ldots,x_n$ be real numbers. Then 
\begin{align*}
&\log \biggl(\frac{1}{n}\sum_{i=1}^n e^{x_i}\biggr) - \frac{1}{n}\sum_{i=1}^n x_i \\
&\ge \frac{1}{4n^3} \min\biggl\{ \sum_{1\le i<j\le n} |x_i-x_j|,\sum_{1\le i<j\le n} (x_i-x_j)^2\biggr\}.
\end{align*}
\end{lmm}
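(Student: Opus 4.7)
The plan is to exploit the symmetric identity
\[
\frac{1}{n}\sum_{i=1}^n e^{x_i} = \frac{1}{n^2}\sum_{i,j=1}^n e^{(x_i+x_j)/2}\cosh\!\left(\frac{x_i-x_j}{2}\right),
\]
which recasts the inside of the logarithm as a double average carrying an explicit $\cosh$ factor for every pair difference. Both sides of the claimed inequality are invariant under $x_i\mapsto x_i+c$, so I may assume $\sum_i x_i = 0$ throughout.

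Applying Lemma \ref{easylmm} to replace each $\cosh((x_i-x_j)/2)$ by $\exp(\min\{|x_i-x_j|/2,(x_i-x_j)^2/4\}/4)$, and then invoking Jensen's inequality (convexity of $\exp$) to pull the logarithm inside the uniform average over pairs, I obtain a bound of the form
\[
\log\!\left(\tfrac{1}{n}\sum_{i} e^{x_i}\right) - \tfrac{1}{n}\sum_{i} x_i \;\ge\; \frac{c}{n^2}\sum_{1\le i<j\le n}\min\{|x_i-x_j|,(x_i-x_j)^2\},
\]
for a universal constant $c>0$; the symmetric $(x_i+x_j)/2$ part cancels against $\tfrac{1}{n}\sum_i x_i$ precisely because $\sum_i x_i=0$. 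This reduces the lemma to an elementary comparison between the pointwise-minimum sum on the right and $\min\{A,B\}/n$, where $A$ and $B$ are the two sums in the statement.

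For that comparison, I would split the pairs into $I_0=\{(i,j): |x_i-x_j|\le 1\}$ and $I_1$ its complement, and abbreviate $p_{ij}=|x_i-x_j|$, so that the min sum equals $\sum_{I_0}p_{ij}^2+\sum_{I_1}p_{ij}$. Assuming without loss of generality that $A\le B$, this translates into the balance identity $\sum_{I_0}p_{ij}(1-p_{ij})\le \sum_{I_1}p_{ij}(p_{ij}-1)$. Two subcases: if the far pairs carry a constant fraction of $A$, then $\sum_{I_1}p_{ij}$ already matches $A$ up to a constant, and we are done. Otherwise the close pairs dominate, and Cauchy--Schwarz on $I_0$ gives $\sum_{I_0}p_{ij}^2\ge (\sum_{I_0}p_{ij})^2/|I_0|\gtrsim A^2/n^2$, which combines with the balance condition to deliver the required lower bound. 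The degenerate case $I_1=\emptyset$ forces $A=B$ with every $p_{ij}\in\{0,1\}$ and is immediate.

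The main obstacle is precisely this combinatorial closing step. Because the pointwise minimum is always at most the minimum of the two sums, the reverse-direction inequality we need cannot hold without structural input; the balance condition $A\le B$ (or its symmetric counterpart) is essential to rule out configurations where most of $A$ comes from vanishingly small differences whose squares are negligible. Tracking the case split carefully around the boundary $|x_i-x_j|=1$ is what pins down the explicit constant $1/(4n^3)$ in the statement.
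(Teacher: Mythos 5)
Your route is genuinely different from the paper's, but it has a real gap in the closing step that you yourself flag as the main obstacle. The paper does not symmetrize over all pairs. It simply singles out $x_1 := \max_i x_i$ and $x_2 := \min_i x_i$, writes $\frac1n\sum e^{x_i} = \frac{2}{n}\cdot\frac{e^{x_1}+e^{x_2}}{2} + \frac1n\sum_{i\ge 3}e^{x_i}$, applies Lemma~\ref{easylmm} only to the single factor $\cosh((x_1-x_2)/2)$ in $\tfrac12(e^{x_1}+e^{x_2})=e^{(x_1+x_2)/2}\cosh((x_1-x_2)/2)$, and then uses Jensen across the $n$ terms. This yields $\log(\tfrac1n\sum e^{x_i})-\tfrac1n\sum x_i\ge\tfrac{1}{8n}\min\{|x_1-x_2|,(x_1-x_2)^2\}$, and the combinatorics finish in one line: since $|x_1-x_2|\ge|x_i-x_j|$ for all pairs and $t\mapsto\min\{t,t^2\}$ is nondecreasing on $[0,\infty)$, $\min\{|x_1-x_2|,(x_1-x_2)^2\}\ge\binom n2^{-1}\min\{A,B\}\ge\tfrac{2}{n^2}\min\{A,B\}$, giving exactly $\tfrac{1}{4n^3}$.

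Your version pays an averaging cost it never recoups. After the identity $\tfrac1n\sum e^{x_i}=\tfrac1{n^2}\sum_{i,j}e^{(x_i+x_j)/2}\cosh(\tfrac{x_i-x_j}{2})$, Lemma~\ref{easylmm}, and Jensen over the $n^2$ pairs, you get
\[
\log\Bigl(\tfrac1n\sum e^{x_i}\Bigr)-\tfrac1n\sum x_i\;\ge\;\tfrac{1}{8n^2}\sum_{i<j}\min\{p_{ij},p_{ij}^2\},\quad p_{ij}:=|x_i-x_j|,
\]
so to reach $\tfrac{1}{4n^3}\min\{A,B\}$ you would need $\sum_{i<j}\min\{p_{ij},p_{ij}^2\}\ge\tfrac{2}{n}\min\{A,B\}$. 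Your proposed Cauchy--Schwarz step only gives $\sum_{I_0}p_{ij}^2\gtrsim A^2/|I_0|\gtrsim A^2/n^2$, which is weaker than $A/n$ whenever $A\lesssim n$, so it does not close the argument in that regime. The easy bound (using only the extreme pair, as above) gives $\sum_{i<j}\min\{p_{ij},p_{ij}^2\}\ge\binom n2^{-1}\min\{A,B\}$, which is too weak by a factor of $n$ and would land you at $\sim 1/n^4$. One can in fact prove the needed $\Theta(1/n)$ comparison, but only by exploiting that the $p_{ij}$ are differences of real numbers (e.g.\ for each $j$ one of $p_{1j},p_{2j}$ is $\ge p^*/2$, where $p^*$ is the max difference), not by Cauchy--Schwarz alone; and even then the constant is delicate for small $n$ (your $I_0$/$I_1$ split gives nothing when $n=2$). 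In short, your decomposition is an interesting symmetric alternative, but it creates exactly the combinatorial difficulty the paper's extreme-pair extraction is designed to avoid, and the route as written is incomplete and, as far as it can be pushed with Cauchy--Schwarz, yields a weaker exponent than the lemma asserts.
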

\begin{proof}
Without loss of generality, suppose that $x_1$ is the largest and $x_2$ is the smallest among the $x_i$'s. By Lemma \ref{easylmm},
\begin{align*}
\frac{1}{2}(e^{x_1}+e^{x_2}) &= e^{(x_1+x_2)/2} \cosh ((x_1-x_2)/2)\\
&\ge e^{(x_1+x_2)/2 + \min\{|x_1-x_2|, (x_1-x_2)^2\}/16}.
\end{align*}
Thus, by Jensen's inequality,
\begin{align*}
&\frac{1}{n}\sum_{i=1}^n e^{x_i} = \frac{2}{n}\biggl(\frac{e^{x_1} + e^{x_2}}{2}\biggr) + \frac{1}{n}\sum_{i=3}^n e^{x_i}\\
&\ge \frac{2}{n} e^{(x_1+x_2)/2 + \min\{|x_1-x_2|, (x_1-x_2)^2\}/16} +   \frac{1}{n}\sum_{i=3}^n e^{x_i}\\
&\ge \exp \biggl(\frac{2}{n}((x_1+x_2)/2 + \min\{|x_1-x_2|, (x_1-x_2)^2\}/16) +   \frac{1}{n}\sum_{i=3}^n x_i\biggr)\\
&= \exp\biggl(\frac{1}{n}\sum_{i=1}^n x_i + \frac{1}{8n}\min\{|x_1-x_2|, (x_1-x_2)^2\}\biggr). 
\end{align*}
Taking logs on both sides and observing that $|x_1-x_2|\ge |x_i-x_j|$ for all $i$ and $j$ completes the proof.
\end{proof}

As in the proof of Theorem \ref{lppthm}, there exists $u^*\in \rr$ such that $F(u^*)=0$. Define a new surface $\tf$ by replacing $z_{1,x}$ by $u^*$ for all $x$, but keeping all else the same. Then note that for any $t\ge 2$ and any $x$,
\begin{align*}
f(t,x) - \tf(t,x) &= \frac{1}{\beta}\log \frac{\sum_{P\in \cp_t} \exp \bigl(\beta\sum_{i=0}^{t-1} F(z_{t-i, x+p_i})\bigr)}{\sum_{P\in \cp_t} \exp\bigl(\beta\sum_{i=0}^{t-2} F( z_{t-i, x+p_i})\bigr)}\\
&=  \frac{1}{\beta} \log \sum_{y\in \zz^d} \rho_{t-1,x}(y) \biggl(\frac{1}{2d}\sum_{b\in B} e^{\beta F(z_{1,y+b})} \biggr), 
\end{align*}
where 
\begin{align*}
\rho_{t-1,x}(y) &:= \frac{\sum_{P\in \cp_{t}, p_{t-2} = y}\exp \bigl(\beta\sum_{i=0}^{t-2} F(z_{t-i, x+p_i})\bigr) }{\sum_{P\in \cp_{t}}\exp \bigl(\beta\sum_{i=0}^{t-2} F(z_{t-i, x+p_i})\bigr)}.
\end{align*}
Let $\ee'$ denote taking expectation only in $\{z_{1,y}\}_{y\in \zz^d}$. Jensen's inequality gives
\begin{align*}
\ee'(f(t,x) - \tf(t,x)) &= \frac{1}{\beta} \ee'\biggl[\log \sum_{y\in \zz^d} \rho_{t-1,x}(y) \biggl(\frac{1}{2d}\sum_{b\in B} e^{\beta F(z_{1,y+b})} \biggr)\biggr]\\
&\le  \frac{1}{\beta}\log \sum_{y\in \zz^d} \ee'\biggl[\rho_{t-1,x}(y) \biggl(\frac{1}{2d}\sum_{b\in B} e^{\beta F(z_{1,y+b})} \biggr)\biggr]\\
&= \frac{1}{\beta} \log \sum_{y\in \zz^d} \rho_{t-1,x}(y) \ee'\biggl(\frac{1}{2d}\sum_{b\in B} e^{\beta F(z_{1,y+b})} \biggr).
\end{align*}
Since $\rho_{t-1,x}$ is a probability mass function on $\zz^d$, this shows that
\begin{align*}
\ee(f(t,x) - \tf(t,x)) &=  \ee[\ee'(f(t,x) - \tf(t,x))]\le C. 
\end{align*}
Note that $\tf(t,x)$ has the same law as $f(t-1,x) + \beta^{-1}\log (2d)$. 
Thus,
\begin{align*}
\ee(f(t,x)) - \ee(f(t-1,x) + \beta^{-1}\log (2d)) &= \ee(f(t,x)) - \ee(\tf(t,x)) \le C.
\end{align*}
By translation invariance, $\ee(f(t-1,x)) = \ee(f(t-1,y))$ for any $y$. Since the noise variables have mean zero, this shows that 
\begin{align*}
&\ee(f(t,x)) - \ee(f(t-1,x) + \beta^{-1}\log (2d)) \\
&= \ee\biggl(f(t,x) - \beta^{-1}\log (2d) - \frac{1}{2d}\sum_{b\in B} f(t-1,x+b)\biggr)\\
&= \ee\biggl[\frac{1}{\beta}\log \biggl(\frac{1}{2d}\sum_{b\in B} e^{\beta f(t-1,x+b)}\biggr) - \frac{1}{2d}\sum_{b\in B} f(t-1,x+b)\biggr].
\end{align*}
By Lemma \ref{polymersimple}, 
\begin{align*}
& \frac{1}{\beta}\log \biggl(\frac{1}{2d}\sum_{b\in B} e^{\beta f(t-1,x+b)}\biggr) - \frac{1}{2d}\sum_{b\in B} f(t-1,x+b)\\
&\ge \frac{1}{32d^3}\min\biggl\{\sum_{b,b'\in B} |f(t-1,x+b) - f(t-1, x+b')|, \\
&\qquad \qquad \sum_{b,b'\in B} (f(t-1,x+b) - f(t-1, x+b'))^2\biggr\}.
\end{align*}
Combining this with the two preceding displays, and noting that $|y-z|_1=2$ if and only if $y$ and $z$ are neighbors of some common vertex, we get that for any $y,z\in \zz^d$ with $|y-z|_1=2$, and any $t\ge 2$,
\begin{align*}
&\ee|f(t,y)-f(t,z)| \\
&\le 1 + \ee\min\{|f(t,y)-f(t,z)|, (f(t,y)-f(t,z))^2\} \le C.
\end{align*}
This proves the first claim of the theorem. The remaining claims can now be proved using similar tactics as in the proof of Theorem \ref{lppthm}.

\section*{Acknowledgments}
I thank Persi Diaconis for helpful comments that helped improve the first draft of the paper. I also thank the anonymous referee for a careful reading of the manuscript and pointing out several typos and errors.

\bibliographystyle{plainnat}
\bibliography{myrefs}

\end{document}